\let\oldmarginpar\marginpar
\renewcommand\marginpar[1]{\-\oldmarginpar[\raggedleft\footnotesize #1]%
{\raggedright\footnotesize #1}}
\newtheorem{thm}{Theorem}[section]
\newtheorem{lem}[thm]{Lemma}
\newtheorem{prop}[thm]{Proposition}
\newtheorem{cor}[thm]{Corollary}
\newtheorem*{conv}{Convention}
\theoremstyle{definition}
\newtheorem{defn}[thm]{Definition}
\theoremstyle{remark}
\newtheorem{rmk}[thm]{Remark}
\newtheorem{exam}[thm]{Example}
\newcommand{\e}{\epsilon}
\newcommand{\bfe}{\mathbf{e}}
\newcommand{\C}{\mathbb{C}}
\newcommand{\Z}{\mathbb{Z}}
\newcommand{\R}{\mathbb{R}}
\newcommand{\Q}{\mathbb{Q}}
\newcommand{\g}{\gamma}
\renewcommand{\l}{\lambda}
\newcommand{\de}{\delta}
\newcommand{\Ad}{\mathrm{Ad}}
\newcommand{\bAd}{\bar{\Ad}}
\newcommand{\ad}{\mathrm{ad}}
\renewcommand{\a}{\alpha}
\newcommand{\Om}{\Omega}
\newcommand{\fo}{\mathfrak{o}}
\newcommand{\fu}{\mathfrak{u}}
\newcommand{\hfu}{\hat{\fu}}
\renewcommand{\b}{\beta}
\newcommand{\tb}{\tilde{\beta}}
\newcommand{\tbo}{\tilde{\beta}_0}
\newcommand{\fb}{\mathfrak{b}}
\newcommand{\fz}{\mathfrak{z}}
\newcommand{\ddz}{\frac{dz}{z}}
\newcommand{\ddu}{\frac{du}{u}}
\newcommand{\n}{\nabla}
\newcommand{\fg}{\mathfrak{g}}
\newcommand{\fh}{\mathfrak{h}}
\newcommand{\hG}{\hat{G}}
\newcommand{\hfg}{\hat{\fg}}
\newcommand{\hW}{W_{\text{aff}}}
\newcommand{\hatW}{\hat{W}}
\newcommand{\hT}{\hat{T}}
\newcommand{\hU}{\hat{U}}
\newcommand{\hV}{\hat{V}}
\newcommand{\hft}{\hat{\ft}}
\newcommand{\ft}{\mathfrak{t}}
\newcommand{\pd}{d}
\newcommand{\tx}{\tilde{x}}
\newcommand{\ty}{\tilde{y}}
\newcommand{\hN}{\hat{N}}
\newcommand{\ta}{\tilde{a}}
\newcommand{\sG}{\mathcal{G}}
\newcommand{\sQ}{\mathcal{Q}}
\newcommand{\sS}{\mathcal{S}}
\newcommand{\nbr}{[\nabla]}
\newcommand{\cL}{\mathcal{L}}
\newcommand{\fiw}{\frak{i}}
\newcommand{\Opt}{\Psi}
\newcommand{\A}{\mathcal{A}}
\newcommand{\bA}{\bar{\A}}
\newcommand{\bx}{\bar{x}}
\newcommand{\B}{\mathcal{B}}
\newcommand{\bB}{\bar{\B}}
\newcommand{\Ao}{\A_0}
\newcommand{\bAo}{\bA_0}
\newcommand{\bF}{\bar{F}}
\DeclareMathOperator{\Ga}{\mathbb{G}_a}
\DeclareMathOperator{\GL}{\mathrm{GL}}
\DeclareMathOperator{\SL}{\mathrm{SL}}
\DeclareMathOperator{\Sp}{\mathrm{Sp}}
\DeclareMathOperator{\End}{\mathrm{End}}
\DeclareMathOperator{\Aut}{\mathrm{Aut}}
\DeclareMathOperator{\Res}{\mathrm{Res}}
\DeclareMathOperator{\hfz}{\hat{\fz}}
\DeclareMathOperator{\Lie}{\mathrm{Lie}}
\DeclareMathOperator{\Rep}{\mathrm{Rep}}
\DeclareMathOperator{\Spec}{\mathrm{Spec}}
\DeclareMathOperator{\Stab}{\mathrm{Stab}}
\DeclareMathOperator{\gr}{\mathrm{gr}}
\DeclareMathOperator{\Tr}{\mathrm{Tr}}
\DeclareMathOperator{\diag}{diag}
\DeclareMathOperator{\Hom}{Hom}
\DeclareMathOperator{\spa}{span}
\newcommand{\Dx}{\Delta^\times}
\newcommand{\tg}{\hat{\gamma}}
\DeclareMathOperator{\gl}{\mathfrak{gl}}
\DeclareMathOperator{\Gm}{\mathbb{G}_m}
\newcommand{\bfy}{\mathbf{y}}
\newcommand{\bfS}{\mathbf{S}}
\newcommand{\bfr}{\mathbf{r}}
\DeclareMathOperator{\tM}{\widetilde{\mathcal{M}}}
\DeclareMathOperator{\cA}{\mathcal{A}}
\DeclareMathOperator{\Crit}{Crit}
\DeclareMathOperator{\slope}{slope}
\DeclareMathOperator{\Span}{span}
\renewcommand{\P}{\mathbb P}
\newcommand{\fP}{\mathfrak{p}}
\newcommand{\dAo}[1]{d_{\Ao, #1}}
\title[A theory of minimal $K$-types for flat $G$-bundles]{A theory of minimal $K$-types for flat $G$-bundles}
\author{Christopher L.~Bremer}
\address{Department of Mathematics\\
  Louisiana State University\\
  Baton Rouge, LA 70803} 
\email{cbremer@math.lsu.edu} 
\author{Daniel
  S.~Sage} 
\email{sage@math.lsu.edu}
\email{} \thanks{The second author was partially supported by NSF
  grant DMS-1503555 and Simons Foundation Collaboration Grant
  281502).}  \subjclass[2010]{} \keywords{}
\begin{document}
\begin{abstract}

  The theory of minimal $K$-types for $p$-adic reductive groups was
  developed in part to classify irreducible admissible representations
  with wild ramification.  An important observation was that minimal
  $K$-types associated to such representations correspond to
  fundamental strata.  These latter objects are triples $(x, r, \b)$,
  where $x$ is a point in the Bruhat-Tits building of the reductive
  group $G$, $r$ is a nonnegative real number, and $\b$ is a
  semistable functional on the degree $r$ associated graded piece of
  the Moy-Prasad filtration corresponding to $x$.

  Recent work on the wild ramification case of the geometric Langlands
  conjectures suggests that fundamental strata also play a role in the
  geometric setting.  In this paper, we develop a theory of minimal
  $K$-types for formal flat $G$-bundles.  We show that any formal flat
  $G$-bundle contains a fundamental stratum; moreover, all such
  strata have the same rational depth.  We thus obtain a new
  invariant of a flat $G$-bundle called the slope, generalizing the
  classical definition for flat vector bundles.  The slope can also be
  realized as the minimum depth of a stratum contained in the flat
  $G$-bundle, and in the case of positive slope, all such minimal
  depth strata are fundamental.  Finally, we show that a flat
  $G$-bundle is irregular singular if and only if it has positive
  slope.

\end{abstract}
\maketitle
\section{Introduction}

The theory of unrefined minimal $K$-types for representations of
$p$-adic reductive groups arose as a response to two \emph{a priori}
distinct problems concerning admissible representations of $\GL_n$.
First, Bushnell and Fr\"olich~\cite{BuFr} introduced the notion of a
\emph{fundamental stratum} contained in a representation and showed
that for representations containing a fundamental stratum, the stratum
could be used to calculate the constants in the functional equation
of a zeta integral.  Bushnell~\cite{Bus} later proved that any
irreducible admissible representation of $\GL_n$ contains a
fundamental stratum.  In another direction, Howe and
Moy~\cite{Moy89,HM}, motivated by work of Vogan~\cite{V} on
representations of real groups, developed a theory of (unrefined)
minimal $K$-types for $\GL_n$ in order to better understand the
parameterization of irreducible representations.  In particular, they
were interested in establishing a well-defined notion of the
\emph{depth} of a representation $V$ by studying the congruence level
of compact subgroups that fix a vector in $V$.  In retrospect, one
sees that fundamental strata and unrefined minimal $K$-types contain
equivalent information about admissible representations of $\GL_n$
with positive depth.  Thus, fundamental strata can be viewed as a
theory of minimal $K$-types in this case.

This theory plays a vital role in the classification of supercuspidal
representations with wild ramification~\cite{BuKu1,Ku} as well as in
the proof of the local Langlands conjecture for $\GL_n$~\cite{HT,Hen}.
Recent work suggests that minimal $K$-types also play a role in the
geometric setting.  Somewhat unexpectedly, the applications so far
have been on the ``Galois'' side of the correspondence.  Let $G$ be a
complex reductive group with Langlands dual ${}^{L} G$, and let $F$ be
the field of complex Laurent series $\C(\!(z)\!)$.  In the geometric
Langlands program, the role of Galois representations is played by
monodromy data associated to flat ${}^L G$-bundles: over a smooth
complex curve $X$ or the formal punctured disk $\Dx=\Spec(F)$
depending on whether one is in the global or local context.  Consider,
for example, the case $G={}^L G=\GL_n$, so that a flat ${}^L G$-bundle
on $X$ is just a rank $n$ vector bundle endowed with a meromorphic
connection.  For regular singular connections, i.e., those whose
connection matrix at each singular point can be chosen to have simple
poles, the monodromy data is just a representation of the fundamental
group.  Most previous analyses of geometric Langlands have
concentrated on such connections; indeed, a detailed correspondence
has been formulated by Frenkel and Gaitsgory in the ``tame'' case,
where the connections considered are regular singular with unipotent
monodromy~\cite{FrGa}.

Much less is known about the wild case, where irregular
singularities are allowed.  Here, one must also include ``wild''
monodromy data.  This consists of a collection of Stokes matrices at
each singular point, describing the ``jumps'' in the asymptotic
behavior of a horizontal section as it is analytically continued
around each irregular singularity.  Thus, the Stokes data is simply an
enhancement of the usual monodromy data, which allows one to establish
a Riemann-Hilbert correspondence for irregular singular flat vector
bundles~\cite[Chapitre IV]{Mal}.  The deviation of an irregular
singular connection from the regular singular case, or equivalently
the complexity of the Stokes data, is measured by the \emph{slope} of
the connection. By analogy with the $p$-adic case, a geometric theory
of minimal $K$-types ought to detect both whether a flat $G$-bundle is
irregular and the degree of its irregularity, thereby giving a
definition for general $G$ of the discrete invariant slope (akin to
the depth of a representation). It should yield information about the
moduli stack of flat $G$-bundles.  Moreover, it should illuminate
certain transcendental invariants such as the irregular monodromy map,
just as the classical theory was used to calculate constants in the
functional equation of zeta integrals.  Finally, the theory should be
effectively computable, i.e., it should provide an algorithm for
finding a minimal $K$-type associated to a flat $G$-bundle.

The classical approach to studying the local behavior of meromorphic
differential equations in one variable, or equivalently, of
meromorphic connections on $\P^1$, makes use of the naive ``leading
term'' of the connection.  More generally, let $X$ be a smooth curve.
Let $V$ be a rank $n$ vector bundle on $X$ endowed with a meromorphic
connection $\n$, and assume that $y\in X$ is an irregular singular
point.  After choosing a trivialization for $V$ on a formal
neighborhood of $y$ and a local parameter $z$, $\n$ has the local
description
\begin{equation}\label{expansion}
\nabla = d+[\n]=d + (M_{-r}z^{-r}  + M_{-r+1}z^{-r+1}+\dots)\ddz,
\end{equation}
where $[\n]$ (the \emph{matrix} of the connection) is a
$\gl_n(\C(\!(z)\!))$-valued one-form, $M_j \in \gl_n(\C)$, and $r\ge 0$.
When the leading term $M_{-r}$ is well-behaved, asymptotic analysis
using this expansion produces detailed information about the
connection and the form of fundamental solutions at $y$.  (See, for
instance, \cite{Was}.)  For example, if $M_{-r}$ is nonnilpotent, then
the slope of the connection is $r$.  The case when $M_{-r}$ is regular
semisimple has been studied extensively in the literature~\cite{Boa,
  JMU}, and the contribution of the leading term to the Stokes data is
well understood.

This perspective is much less useful when the formal connection has
nonintegral slope because in this situation, the leading term $M_{-r}$
is nilpotent for every formal trivialization at $y$.  This is the case
for many irregular singular connections that arise naturally in the
geometric Langlands program, such as connections corresponding to
cuspidal representations.  For example, Frenkel and Gross have
constructed a rigid flat $G$-bundle on $\P^1$ (for any reductive $G$)
which is the de Rham analogue of the automorphic representation with
the Steinberg representation at $0$ and a certain ``small''
supercuspidal representation at $\infty$~\cite{FrGr}.  Here, the
leading term at the irregular singular point at $\infty$ is always
nilpotent.  More concretely, Witten considers Airy-type connections of
the form
\begin{equation}\label{Airy}
  \nabla = d + \begin{pmatrix} 0 & z^{-s} \\ z^{-s+1} & 0 \end{pmatrix}\frac{dz}{z},
\end{equation}
which always have nilpotent leading terms at $0$~\cite{Wi}.  When
$s=2$, this is the classical Airy connection; when $s=1$, it is the
$\GL_2$ case of the Frenkel-Gross flat $G$-bundle with the roles of
$0$ and $\infty$ reversed.

One obtains the naive notion of the leading term of a connection by
studying the connection in terms of the obvious degree filtration on
$\gl_n(\C(\!(z)\!))$.  In \cite{BrSa1}, the authors introduced a more
powerful notion of the ``leading term'' of a connection by considering
more general filtrations on the loop algebra defined in terms of
\emph{lattice chains}~\cite{BrSa1,Sa00}.  Let $V$ be a
finite-dimensional vector space over $k$.  A lattice chain $\cL =
\{L^j\}_{j \in \Z}$ in $V(\!(z)\!)$ is a collection of lattices (i.e.,
maximal rank $\C[\![z]\!]$-submodules) such that $L^j \supset L^{j+1}$ and
$z L^j = L^{j+e}$ with fixed period $e$.  The stabilizer $P \subset
\GL(V(\!(z)\!))$ of $\cL$ is called a \emph{parahoric subgroup}.  The
lattice chain is uniquely determined by $P$ up to translation of the
indexing of $\cL$; in particular, the period $e=e_P$ is determined by
$P$. One further obtains congruent subalgebras $\fP^m$, consisting of
the endomorphisms that map $L^j$ to $L^{j+m}$ for each $j\in\Z$ in
particular, $\fP = \fP^0$ is the Lie algebra of $P$.  Using this data,
we can associate a triple $(P,r,\b)$ called a \emph{stratum} to the
connection $\n$, where $r$ is a nonnegative integer and $\b$ is a
$\C$-linear functional on $\fP^r/\fP^{r+1}$.  This means that the
matrix $[\n]$ lies in $\fP^{-r}\ddz$ and that the functional on
$\gl(V(\!(z)\!))$ given by $Y \mapsto \Res (\Tr (Y [\n]))$ induces $\b$ on
the quotient algebra.  To give a simple example, the usual leading
term comes from the lattice chain $L^j=z^j V[\![z]\!]$.  The stabilizer
here is the maximal parahoric subgroup $\GL(V[\![z]\!])$ and
$\fP^r=z^r\gl(V[\![z]\!])$.  The connection in \eqref{expansion} satisfies
$[\n](L^j)\subset L^{j-r}\ddz$ while the functional on
$z^r\gl(V[\![z]\!])/z^{r+1}\gl(V[\![z]\!])$ induced by $[\n]$ is the same as
that induced by the leading term $M_{-r}\ddz$.  In the terminology of
\cite{BrSa1}, we say that the stratum
\begin{equation*}\label{naive}(\GL(V[\![z]\!]),r,M_{-r}z^{-r}\ddz)
\end{equation*}
is contained in the induced formal connection at $y$.

As has been noted above, the fact that the stratum \eqref{naive} is
contained in a connection is primarily useful when $M_{-r}$ is
nonnilpotent.  More generally, we say that the stratum $(P,r,\b)$ is
\emph{fundamental} if it satisfies an analogous nondegeneracy
condition.  To be precise, the functional $\b$ comes from some
$Y\ddz\in\fP^{-r}\ddz$ as described above, and the stratum $(P,r,\b)$
is fundamental if the action of $Y$ on the associated graded space
$\bigoplus L^i/L^{i +1}$ is not nilpotent.  It is fundamental strata
that provide a better notion of the leading term of a connection.  In
\cite{BrSa1}, we showed that every formal connection contains a
fundamental stratum.  Moreover, if $(P,r,\b)$ is contained in
$(V(\!(z)\!),\n)$, then $r/e_P\ge\slope(V(\!(z)\!),\n)$ with equality (in the
irregular singular case) precisely when the stratum is fundamental.
We remark that this geometric theory of strata was motivated by the
analogous $p$-adic theory, which plays a crucial role in the
representation theory of $\GL_n$ over $p$-adic
fields~\cite{BuFr,Bus,BuKu1}.

A fundamental stratum contained in a connection at a given point
should provide a coarse approximation to the local behavior of the
connection and its solutions.  This can be seen most strikingly when
the fundamental stratum is \emph{regular}, a condition introduced by
the authors generalizing the classical assumption of regular
semisimplicity of the naive leading term~\cite{BrSa1}.  The data of a
regular stratum includes a (not necessarily split) maximal torus $S$
in $\GL_n(V(\!(z)\!))$. Consider, for example, the collection of
meromorphic connections $(V,\n)$ on $\P^1$ with singularities at
$\bfy=(y_1,\dots,y_m)$ such that the induced formal connection at each
$y_i$ contains an $S_i$-regular stratum of depth $r_i$.  Let $\bfr$
and $\bfS$ denote the collection of $r_i$'s and $S_i$'s respectively.
One can construct the moduli space of such connections as a Poisson
manifold $\tM(\bfy, \bfr)$.  Moreover, the construction of this moduli
space is ``automorphic'', i.e., it is realized as the Poisson
reduction of products of smooth varieties describing local data at the
singularities.\footnote{See \cite{Saisaac} for some explicit examples.}  Finally, the monodromy map induces an integrable
system on $\tM(\bfy,\bfS,\bfr)$, and the authors have analyzed the
relationship between the monodromy foliation and the corresponding
regular strata in~\cite{BrSa2}.

The goal of this paper is to develop a geometric theory of minimal
$K$-types.\footnote{Further extensions and applications appear in
  \cite{BrSa5, KamSa}.}  We will do so by generalizing the theory of
strata and its application to formal flat $G$-bundles for an arbitrary
reductive group $G$ over an algebraically closed field of
characteristic $0$.  Again, one wants to study the behavior of a
connection in terms of suitable filtrations on the loop algebra
$\fg(\!(z)\!)$.  In the general setting, we will use the filtrations
introduced by Moy and Prasad in their theory of minimal $K$-types for
admissible representations of $p$-adic groups~\cite{MP1, MP2}.  Given
any algebraic group $H$ defined over a discrete valuation field, there
is an associated complex called its Bruhat-Tits building.  For any
point $x$ in this building, Moy and Prasad defined a decreasing
$\R_{\ge 0}$-filtration $\{H_{x,r}\}$ on the parahoric subgroup
$H_x=H_{x,0}$ with a discrete number of jumps~\cite{PRag,MP1}.  There
is also a compatible $\R$-filtration on the Lie algebra $\fh$.  An
\emph{unrefined $K$-type} (at least for $r>0$) is then a triple
$(x,r,\b)$ with $\b$ a character of $H_{x,r}/H_{x,r+}$ (with
$H_{x,r+}$ the next step up in the filtration); it is called minimal
if it satisfies a certain nondegeneracy condition.  For $p$-adic
groups, Moy and Prasad showed that every irreducible admissible
representation $V$ of $H$ contains a minimal unrefined $K$-type.
Moreover, if one defines the depth of $V$ to be the smallest $r$
appearing in a $K$-type in $V$, then $(x',r',\b')$ contained in $V$ is
minimal if and only if $r'=r$.  Finally, they showed that two minimal
$K$-types contained in $V$ are closely related; in their terminology,
the minimal $K$-types are \emph{associates} of each
other~\cite{MP1,MP2}.

Returning to the geometric setting, we define a $G(\!(z)\!)$-stratum
to be
a triple $(x,r,\b)$ where $x$ is a point in the Bruhat-Tits building
for $G(\!(z)\!)$, $r\in\R_{\ge 0}$, and $\b$ is a functional on the $r$th
step $\fg(\!(z)\!)_{x,r}/\fg(\!(z)\!)_{x,r+}$ in the filtration on $\fg(\!(z)\!)$
determined by $x$.  Again, we call a stratum fundamental if the
functional $\b$ satisfies a certain nondegeneracy condition.  We
remark that if $r>0$, the exponential map induces an isomorphism
between $\fg(\!(z)\!)_{x,r}/\fg(\!(z)\!)_{x,r+}$ and
$G(\!(z)\!)_{x,r}/G(\!(z)\!)_{x,r+}$, so there is a bijection between strata
and unrefined $K$-types.  In the case of flat vector bundles (i.e.,
$G=\GL_n$), we can recover our previous version of strata by choosing
appropriate points in the building.  Indeed, any parahoric subgroup
$P$ corresponds to a unique facet in the building, and this facet
decomposed as the product of a simplex and $\R$.  If $x$ is any point
lying over the barycenter of this simplex, then the jumps in the
associated filtration occur at $\frac{1}{e_P}\Z$ and
$P^j=G_{x,j/e_P}$, where $e_P$ is the period of a lattice chain with
stabilizer $P$.

Our main results, given in Theorems~\ref{MP},~\ref{associateness}, and
\ref{slopethm}, are the geometric analogue of Moy and Prasad's theorem
on minimal $K$-types for admissible representations of $p$-adic
groups.  We show that any formal flat $G$-bundle $(\sG,\n)$ contains a
fundamental stratum $(x,r,\b)$ with $r$ a nonnegative rational number
and all such strata have the same depth.  In fact, we exhibit an
explicit algorithm for finding such a fundamental stratum. Moreover,
any two fundamental strata contained in $\n$ are associates of each
other.  We thus obtain a new invariant of a flat $G$-bundle called the
slope, generalizing the classical definition for flat connections.  We
further show that $(\sG,\n)$ is irregular singular if and only if
$\slope(\n)>0$.  Finally, we prove that a stratum $(x',r',\b')$
contained in the formal $G$-bundle $\n$ satisfies $r'\ge \slope(\n)$,
and in the irregular singular case, it is fundamental if and only if
$r'=r$.

We remark that there are other approaches to defining the slope.
In~\cite{FrGr}, Frenkel and Gross suggest a procedure which involves
pulling the flat $G$-bundle back to a ramified cover where the
connection matrix (for a suitable trivialization) is well-behaved with
respect to the usual degree filtration.  Corollary~\ref{FGslope} shows
that their approach is well-defined and gives the same invariant as
our definition.  There is also a recent preprint of Chen and
Kamgarpour which shows how to define slope in terms of
opers~\cite{ChKa}.  These alternate definitions are discussed in more
detail in Remark~\ref{sloperemark}.

We would like to thank D. Gaitsgory and M. Kamgarpour for helpful
conversations.  We would also like to thank one of the referees for
suggestions regarding a trivialization-free approach to stratum
containment.

\section{Preliminaries and main results}\label{Prelim}

In this section, we state the main results of the paper.  In order to
make the paper accessible to readers with a background in $p$-adic
groups, who may not be familiar with the geometric setting, we begin
by illustrating the theory for flat vector bundles.  We also provide
some background on flat $G$-bundles.

Throughout the paper, $k$ will be an algebraically closed field
of characteristic $0$, and $F=k(\!(z)\!)$ will be the field of formal Laurent
series over $k$ with ring of integers $\fo=k[\![z]\!]$.  We write $\Dx =
\Spec (F)$ for the formal punctured disk and $\Om^1 = \Om^1_{F/k}$ for
the space of differential one-forms on $\Dx$.  We will denote the
Euler vector field $z\frac{\pd}{\pd z}$ by $\tau$.  Finally,
$\iota_\tau$ will be the inner derivation by $\tau$, so that a
$1$-form $\omega$ can be written $\omega=\iota_\tau(\omega)\ddz$.

\subsection{Flat vector bundles}\label{flatvb}

A flat vector bundle over $\Dx$ is an $F$-vector space $U$ equipped
with a connection $\n$, which is a $k$-derivation $\n : U \to \Om^1
(U)$.  Throughout, we shall assume that $U$ is finite dimensional.
The simplest example is the trivial connection on $ F^n$. In the
standard basis $\{e_i \}_{i = 1}^n$, $\n$ is the usual exterior
derivative $d$ defined by $d(e_i) = 0$ and extended to $F^n$ by
$k$-linearity and the Leibniz rule: for $f \in F$, $d (f u) =
\frac{df}{dz} u + f d(u)$.  More generally, for any $n \times n$
matrix $M$ with coefficients in $\Om^1$, it is easily checked that
the following is a $k$-derivation from $F^n$ to $\Om^1(F^n)$:
\begin{equation*}
\n (u) = d(u) + M u.
\end{equation*}
We will use the shorthand $\n = d + M$ for this operator.  Moreover,
any connection on $F^n$ is of this form.  Accordingly, if $(U,\n)$ is
a rank $n$ connection and $\phi : U \to F^n$ is a trivialization, then
there exists a matrix $[\n]_\phi$ of $1$-forms such that the induced
connection $\n_\phi$ on $F^n$ can be written as $\n_\phi = d +
[\n]_\phi$.  One can also write $\n$ in terms of an ordinary matrix as
$\n_\phi=d+\iota_\tau([\n]_\phi)\ddz$.  The left action of $\GL_n(F)$
on trivializations has the following effect on the matrix of a
connection, known as a gauge transformation: for $g \in \GL_n(F)$,
\begin{equation}\label{gaugevb}
\nbr_{g \phi} = g \cdot \nbr_\phi = g\nbr_\phi g^{-1} - (dg)
g^{-1}.
\end{equation}

We remark that if $T\in\Om^1(\End(U))$, the formula $d+T$ does not
give a well-defined connection on $U$.  However, if $U$ is endowed
with a fixed $k$-structure, say $U=U_k\otimes F$, then $d+T$ makes
sense: $(d+T)(fv)=\frac{df}{dz} v+fT(v)$ for $f\in F$ and $v\in U_k$.
In particular, below we will frequently consider flat vector bundles
with $U=\hV:=V\otimes F$, where $V$ is a $k$-vector space, and
expressions of the form $d+T$ will be defined in terms of the natural
$k$-structure.

A flat vector bundle $(U,\n)$ is called \emph{regular singular} if
there exists an $\fo$-lattice $L \subset U$ with the property that
$\nabla(L) \subset L \otimes_{\fo} \Omega^1_{\fo/k} (1)$.
Equivalently, $(U,\n)$ is regular singular if and only if there exists
a lattice $L$ for which $(\iota_\tau\circ \n) (L) \subset L$.
This means that there is some trivialization $\phi$ with
respect to which the matrix $[\n]_\phi$ has at worst a simple pole.
Otherwise, $\nabla$ is said to be \emph{irregular singular}.

The deviation of an irregular singular connection from the regular
singular case is measured by an invariant called the \emph{slope}.  We
recall the precise definition.  Fix a lattice $L \subset U$.  If $\bfe
= \{e_j\}$ is a finite collection of vectors in $U$, we define
$v(\bfe) = m$ if $m$ is the greatest integer such that $\bfe \subset
z^m L$.   Let $(U, \nabla)$ be an irregular flat
vector bundle. Take $\bfe$ to be any basis for $U$.  By a theorem of Katz
\cite[Theorem~II.1.9]{De}, there is a unique positive rational number
$r$ such that the subset of $\Q$ given by
\begin{equation}\label{irregprecise}
\{ v(\iota_\tau\circ\n)^i \bfe) + r i \mid i > 0\}
\end{equation}
is bounded.  Here, $(\iota_\tau\circ\n)^i \bfe = \{
(\iota_\tau\circ\n)^i (e_j)\}$.  This number is independent of the
choice of basis $\bfe$ and is called the slope of $(U,\n)$.  For a
regular singular connection, the set in \eqref{irregprecise} is never
bounded for positive $r$, and we say that its slope is
$0$.

\subsection{Filtrations on $F$-vector spaces}\label{filtF}

A previous paper by the authors~\cite{BrSa1} is concerned with the
interaction between flat vector bundles $(U, \n)$ and decreasing
$\R$-indexed filtrations on $U$ consisting of $\fo$-lattices $\{U_r
\}_{r \in \R}$.  We will only consider \emph{periodic} (more
specifically, $1$-periodic) filtrations, for which $z U_r = U_{r +1}$
for all $r$.  Such filtrations are parameterized by points in a
certain complex $\B=\B(\GL(U))$ called the Bruhat-Tits building for
$\GL(U)$.  We will not describe this correspondence in detail here;
see Section~\ref{mpfilt2} where we discuss analogous filtrations for
an arbitrary reductive group.  For the present, we only recall a few
properties of the building.\footnote{References for Bruhat-Tits
  buildings include the survey article \cite{Ti} and the book
  \cite{Land}, as well as the original papers of Bruhat and
  Tits~\cite{BT1,BT2}.}  First, $\B$ is the union of $n$-dimensional
real affine spaces called \emph{apartments}, which are in one-to-one
correspondence with the set of split maximal tori in $\GL(U)$.  A
choice of basis for $U$ determines a specific apartment together with
an origin for the affine space.  Second, $\B$ is endowed with a
surjection onto a simplicial complex $\bB$ called the reduced building
with fibers isomorphic to $\R$; furthermore, one can decompose the
building as $\B=\bB\times\R$.  If $x\in\B$ lies over $\bx\in\bB$, then
the filtrations for points lying above $\bx$ are precisely those of
the form $U'_{r}=U_{x,r+s}$ for some fixed $s\in\R$.  Finally, the
origin determined by a choice of basis lies over a fixed vertex in
$\bB$.

\begin{exam}[Degree filtration]\label{degree}
  Let $U = F^n$ and $U_j = z^j \fo^n$ whenever $j \in \Z$.  This
  $\Z$-filtration extends to an $\R$-filtration by $U_i = U_{\lceil i
    \rceil}$. This periodic filtration corresponds to the origin of
  $\B$ determined by the standard basis of $F^n$.
\end{exam}

The most familiar class of periodic filtrations comes from
\emph{lattice chains} in $U$.  Recall that a lattice chain $\cL =
\{L^j\}_{j \in \Z}$ in $U$ is a collection of $\fo$-lattices in $U$
such that $L^j \supset L^{j+1}$ and $z L^j = L^{j+e}$ with fixed
period $e$~\cite{BrSa1,Sa00}. The corresponding $\R$-filtration is
given by setting $U^{\cL}_{j/e}=L^{\lceil j\rceil}$.  Note that
lattice chain filtrations are uniform in the sense that the ``jumps''
in the filtration are evenly spaced a distance $1/e$ apart.  More
formally, given any filtration $\{U_r\}$, set $U_{r+}=\cup_{s>r}U_s$.
The critical numbers of the filtration are those $r$ for which $U_r\ne
U_{r+}$.  A filtration is called \emph{uniform} if the distance
between consecutive critical numbers is a constant.  An arbitrary
uniform filtration is obtained from an appropriate $\{U^{\cL}_r\}$ by
translating the indices by a constant.

Uniform filtrations correspond to distinguished points in the
building.  Let $P \subset \GL(U)$ be the stabilizer of $\cL$;
equivalently, it is the stabilizer of any uniform filtration coming
from $\cL$ up to translation of the indices.  Such subgroups are
called \emph{parahoric subgroups}, and they parameterize the facets in
both $\B$ and $\bB$ in a way compatible with the natural map
$\B\to\bB$.  The uniform filtrations coming from $\cL$ are precisely
those $U_{x,r}$ with $x\in\B$ lying above the barycenter of the
simplex in $\bB$ corresponding to $P$ (c.f.
Appendix~\ref{glappendix}).  We denote the Lie algebra of $P$ by
$\fP$; it is called a \emph{parahoric subalgebra}.

A periodic filtration on $U$ also induces a periodic filtration on the
endomorphism ring $\gl(U)$ of $U$.  Indeed, if $x\in\B$ corresponds to
the filtration $U_{x,r}$, then one sets $\gl(U)_{x,r}=\{X\in\gl(U)\mid
X(U_{x,s})\subset U_{x,s+r}\text{ for all } s\}$.  In particular, if
$x$ is in the facet corresponding to the parahoric subgroup $P$, then
$\gl(U)_{x,0}=\fP$.  These subspaces should be viewed as
generalized congruence subalgebras.  To see why, let $\cL$ be a
lattice chain with period $e$.  The corresponding
sequence of congruence subalgebras is given by $\fP^m
=\{X\in\gl(U)\mid X(L^i)\subset L^{i+m}\text{ for all } i\}$, with
$\fP^0 = \fP$. It is now easy to check that the filtration
$\{\gl(U)^{\cL}\}$ with critical numbers $\frac{1}{e}\Z$ and
$\gl(U)^\cL_{m/e}=\fP^m$ for all $m$ is the same as the filtration
induced from $\{U^{\cL}_r\}$.

The degree filtration is an example of a uniform filtration coming
from a lattice chain of period $1$.  The corresponding parahoric
subgroup is the maximal parahoric subgroup $\GL_n(\fo)$.  We now give
an example corresponding to a minimal parahoric (or Iwahori) subgroup.

\begin{exam}[Iwahori filtration]\label{Iwahori}
  Let $U = F^n$.  For $m\in\Z$, write $m=sn+j$ with $0\le j<n$, and
  let $L^m$ be the lattice with $\fo$-basis $\{z^s e_i\mid i\le
  n-j\}\cup\{z^{s+1} e_i\mid i>n-j\}$.  The lattice chain $\cL=\{L^m\}$
  has period $n$, so that the associated periodic filtration has
  critical numbers $\frac{1}{n}\Z$.  The stabilizer of this lattice
  chain is the standard Iwahori subgroup $I$, consisting of the
  pullback of the standard upper triangular Borel subgroup under the
  natural map $\GL_n(\fo)\to\GL_n(k)$.  The point $x_I\in\B$
  corresponding to the Iwahori filtration lies above the barycenter of
  the chamber (i.e., maximal facet) in $\bB$ corresponding to $I$.
\end{exam}

We will also need to consider periodic filtrations on the space of
$1$-forms $\Om^1(U)$ and on the smooth $k$-dual space $\gl(U)^\vee$, i.e., the space of
$k$-functionals vanishing on a nonempty bounded open subgroup.  First,
we define $\Om^1(U)_{x,r}$ as the image of $U_{x,r}$ via the
 isomorphism $U\to\Om^1(U)$ given by $u\mapsto u\ddz$.    Next, observe that there
is a pairing
\begin{equation}\label{ResTr}\gl(U)\times \gl(U)\to k,  \qquad (X,Y)\mapsto
\Res\Tr(XY)\tfrac{dz}{z},
\end{equation}
which induces an isomorphism $\gl(U)\to \gl(U)^\vee$.  We set
$\gl(U)^\vee_{x,r}$ to be the image of $\gl(U)_{x,r}$ under this
isomorphism.

\subsection{$U$-strata and flat vector bundles}
In order to relate filtrations to connections, we introduce the notion
of a $U$-stratum.  We will denote the $k$-dual of a finite-dimensional
$k$-vector space $W$ by $W^\vee$.

\begin{defn}\label{ustrat} A \emph{$U$-stratum} of depth $r$ is a triple $(x,r,\b)$,
  where $x\in\B$, $r\ge 0$, and
  $\b\in(\gl(U)_{x,r}/\gl(U)_{x,r+})^\vee$.
\end{defn}

The pairing \eqref{ResTr} induces an isomorphism
$\left(\gl(U)_{x,r}/\gl(U)_{x,r+}\right)^\vee\cong\gl(U)^\vee_{x,-r}/\gl(U)^\vee_{x,-r+}$,
and we say that $\tb\in\gl(U)^\vee_{x,-r}$ is a \emph{representative} of
$\b$ if $\tb+\gl(U)^\vee_{x,-r+}$ corresponds to $\b$.  The stratum
$(x,r,\b)$ is called \emph{fundamental} if every representative is
nonnilpotent.  (We define nilpotent elements in $\gl(U)^\vee$ via
transport of structure from $\gl(U)$.)

We can now show how $U$-strata can be used to define the leading term
of a flat connection with respect to a periodic filtration.  Given
$\tb\in\gl(U)^\vee$, set $X_{\tb}\in\Om^1(\gl(U))$ equal to the unique
$1$-form such that $\Res\tb(Y)\ddz= \Res\Tr(X_{\tb}
Y)$. 

\begin{defn}\label{vbcontaindef}
  Let $(U, \n)$ be a flat vector bundle.  We say that $(U, \n)$
  contains the stratum $(x, r, \b)$ if, given any (or equivalently,
  some) representative $\tb$ for $\b$,
\begin{equation}\label{vbcontain}
\begin{aligned}
(\n - j\ddz - X_{\tb})( U_{x, j}) & \subseteq \Om^1(U)_{x, (j-r)+}, & \forall j \in \R.
\end{aligned}
\end{equation}
\end{defn}

\begin{exam} Let $o\in\B$ correspond to the degree filtration on $F^n$
  determined by the standard basis as in Example~\ref{degree}.  The
  connection given in \eqref{expansion} makes $F^n$ into a flat vector
  bundle that contains the stratum $(o,r,M_{-r}z^{-r}\frac{dz}{z})$.
  Here, $M_{-r}\frac{dz}{z}$ is viewed as the functional on
  $z^r\gl_n(k)\cong z^r\gl_n(\fo)/z^{r+1}\gl_n(\fo)$ given by the
  residue of the trace form.  This stratum is fundamental if and only
  if $M_{-r}$ is nonnilpotent.
\end{exam}

\begin{exam} The connection on $F^2$ given by \eqref{Airy} contains
  the nonfundamental stratum $(o,s,\left(\begin{smallmatrix}
      0&z^{-s}\\0&0
    \end{smallmatrix}\right)\ddz)$ based at the degree filtration.
    However, it contains a fundamental stratum with respect to the
    Iwahori filtration of Example~\ref{Iwahori}, namely
\begin{equation*}(x_I,s-\frac{1}{2},\left(\begin{smallmatrix} 0&z^{-s}\\z^{-s+1}&0
    \end{smallmatrix}\right)\ddz).
\end{equation*}
Setting $\frak{i}=\Lie(I)$, the $s$th Iwahori congruence
subalgebra is given by \begin{equation*} \fiw^s=\begin{pmatrix}z^{\lceil
      s/2\rceil}\fo &z^{\lfloor s/2\rfloor}\fo\\z^{\lfloor s/2\rfloor
      +1}\fo&z^{\lceil s/2\rceil}\fo
  \end{pmatrix}.
\end{equation*}  The $1$-form in the stratum acts by $\Res\Tr$ on
$\Span_k\{\left(\begin{smallmatrix} 0&0\\ z^{s}&0
    \end{smallmatrix}\right),\left(\begin{smallmatrix}
      0&z^{s-1}\\0&0
    \end{smallmatrix}\right)\}\cong
  \frak{i}^{2s-1}/\frak{i}^{2s}$.
  \end{exam}

\begin{rmk}\label{vbstratarem} The theory of $U$-strata described here
  is somewhat different from that given in~\cite{BrSa1}.  For example,
  the definition of strata in \cite[Definition 2.13]{BrSa1} (as well
  as the original definition from \cite{BuFr} in the context of
  $p$-adic representation theory) only allows for filtrations coming
  from lattice chains.  Also, the present definition of containment of
  a stratum in a connection differs from that in \cite[Definition
  4.1]{BrSa1}; it is equivalent for strata of positive depth.  This is
  discussed in the appendix; see, in particular,
  Proposition~\ref{vbequiv}.
\end{rmk}

\subsection{Flat $G$-bundles}\label{flatg}

We now turn from flat vector bundles to flat $G$-bundles.  We will
need to fix some notation.  

Let $G$ be a connected reductive group over $k$ with Lie algebra
$\fg$.  The category of finite-dimensional representations of $G$ over
$k$ will be denoted by $\Rep(G)$. We fix a nondegenerate invariant
symmetric bilinear form $\left<, \right>$ on $\fg$ throughout.  We set
$\hG=G_F$ and $\hfg=\fg\otimes_k F$; note that $\hG$ represents the
functor sending a $k$-algebra $R$ to $G(R(\!(z)\!))$.  We will use the
analogous notation $\hat{H}$ and $\hat{\fh}$ for any algebraic group
$H$ over $k$.

A formal principal $G$-bundle $\sG$ is a principal $G$-bundle over
$\Dx$.  The $G$-bundle $\sG$ induces a tensor functor from $\Rep(G)$
to the category of formal vector bundles via
$(V,\rho)\mapsto V_\sG = \sG \times_G V$, and by Tannakian formalism,
this tensor functor uniquely determines $\sG$.  Formal principal
$G$-bundles are trivializable, so we may always choose a
trivialization $\phi : \sG \to \hG$.  Note that this trivialization
induces an isomorphism between the groups $\Aut(\sG)$ and
$\hG$. Moreover, there is a left action of
$\hG$ on the set of trivializations of $\sG$.  The trivialization
$\phi$ induces a compatible collection of maps
$\phi_V:V_\sG\to\hV:=V\otimes_k F$; we will usually omit the
superscript from the notation.

A flat structure on a principal $G$-bundle is a formal derivation $\n$
that determines a compatible family of flat connections $\n_V$ (which
we usually write simply as $\n$) on $V_\sG$ for all
$(V,\rho) \in \Rep(G)$.  In practice, once one has fixed a
trivialization $\phi$ for $\sG$, $\n$ may be expressed in terms of a
one-form with coefficients in $\hfg$.  This means that there exists an
element $\nbr_\phi \in \Om^1(\hfg)$, called the \emph{matrix} of
$\n$ with respect to the trivialization $\phi$, for which the induced
connection on $\hV$ is given by $d+\rho(\nbr_\phi)$.  We will formally
write $\n_\phi=d+\nbr_\phi$ for the flat structure on $\hG$ induced by
$\phi$.  To express the effect of change of trivialization (or gauge
change) on the matrix, we first observe that there is a natural action
of $\hG$ on $\Om^1(\hfg)$ which we will denote by
$\Ad^*$.\footnote{As we will see in Remark~\ref{coadjoint}, this may be viewed as
  the coadjoint action on $\fg^\vee\otimes F$.}  Recalling that
$\iota_\tau$ is the inner derivation by the Euler vector field, we can
write $[\n]_\phi=\iota_\tau([\n]_\phi)\ddz\in\hfg\ddz$; with this notation,
$\Ad^*(g) (\nbr_\phi)=\Ad(g)(\iota_\tau([\n]_\phi))\ddz$.  The gauge
transformation action of $\hG$ is then given by 
\begin{equation}\label{gauge}
\nbr_{g \phi} = g \cdot \nbr_\phi = \Ad^*(g) (\nbr_\phi) - (dg) g^{-1}.
\end{equation}
Here, the right-invariant Maurer-Cartan form $(dg) g^{-1}$ lies in
$\Om^1(\hfg)$ and may be calculated explicitly.

We remark that there is an equivalence of categories between flat
$\GL_n$-bundles and flat rank $n$ vector bundles given by
$(\sG,\n)\mapsto(V_\sG,\n_V)$, where $V$ is the standard
representation.

\subsection{The Bruhat-Tits building}\label{btbuild}

As for flat vector bundles, we will study flat $G$-bundles in terms of
appropriate periodic $\R$-filtrations; however, here we will need
compatible filtrations on $\hV$ for all $V\in\Rep(G)$.
We will consider a class of filtrations introduced by Moy and Prasad
that are parameterized by a complex $\B(\hG)$ called the Bruhat-Tits
building of $\hG$~\cite{MP1}. 

In this section, we recall some basic information about the Bruhat-Tits
building.  Fix a maximal torus 
$T\subset G$ with corresponding Cartan subalgebra
$\ft$.  Let $N=N(T)$ be the normalizer of $T$, so that the
Weyl group $W$ of $G$ is isomorphic to $N/T$.  We denote the set of
roots with respect to $T$ by $\Phi$; if
$\alpha \in \Phi$, $U_\a \subset G$ is the associated root subgroup
and $\fu_\a \subset \fg$ is the weight space for $\ft$ corresponding
to $\a$. We will write $Z$ for the center of $G$ and $\fz$ for its Lie
algebra.  We also let $\hW=N(T(F))/T(\fo)$ denote the affine Weyl
group.

We will define two versions of the building, the reduced building
$\bB(\hG)$ and the enlarged building $\B(\hG)$, which we will simply
refer to as the building of $\hG$.  Both are defined as appropriate
unions of affine spaces called apartments, which are in one-to-one
correspondence with the split maximal tori in $\hG$. We start by
defining the apartments $\bAo=\bA(\hT) $ and $\Ao=\A(\hT)$ associated
to $\hT$, which we call standard apartments; they are affine spaces
isomorphic to $X_*(T\cap[G,G])\otimes_\Z \R$ and $\bAo\times \left(X_*
  (Z) \otimes_\Z \R\right)\cong X_*(T) \otimes_\Z \R$ respectively.
The standard apartments are endowed with a cell structure induced by
the roots.  Explicitly, the facets are intersections of half-spaces
determined by the affine hyperplanes $\{x\in\Ao\mid \a(x)=j\}$ for
$\a\in\Phi$ and $j\in\Z$ (and similarly for $\bAo$).  A facet in
$\bAo$ is a polysimplex while the facets in $\Ao$ are pullbacks of
those in $\bAo$ under the projection map. The group $N(\hT)$ acts on
these apartments by affine transformations which preserve the cell
structure.

We next need to define the parahoric subgroup $\hG_x$ associated to
$x\in\Ao$.  Given $\a\in\Phi$, let $\hU_{\a,x}$ be the image of
$z^{\lceil -\a(x)\rceil}\fo$ under the isomorphism $F\cong\hU_\a$.
The subgroup $\hG_{x}$ is then generated by $T(\fo)$ and the $\hU_{\a,
  x}$'s.  Its pro-unipotent radical is denoted by $\hG_{x,+}$.  The
reduced building $\bB(\hG)$ is defined as the quotient of
$\hG\times\bAo$ by the following equivalence relation \cite[\S
9]{Land}:
\begin{equation*} (g,x)\sim (h,y) \iff \text{ there exists $n\in
    N(\hT)$ such that $y=n\cdot x$ and $ g^{-1}hn\in\hG_{x+}$.}
\end{equation*}
We then set $\B(\hG)=\bB(\hG)\times \left(X_* (Z) \otimes_\Z
  \R\right)$.  We will view $\bB(\hG)$ as a subset of $\B(\hG)$ via
the zero section.  The group $\hG$ acts on the buildings, and the
buildings inherit cell structures from the standard apartments via
these actions.  The cell structures are compatible with the projection
$\B(\hG)\to\bB(\hG)$.

Since every point is conjugate to an element in the standard
apartments, the buildings are covered by translates of the standard
apartments.  These are the apartments of the buildings.  They are
parameterized by the split maximal tori in $\hG$.  More specifically,
any split maximal torus $T'$ is of the form $g\hT g^{-1}$ for some
$g\in\hG$; we define the corresponding apartments via $\cA(T')=g\Ao$
and $\bA(T')=g\bAo$.

Given $x\in \B(\hG)$, we define the parahoric subgroup $\hG_x$ to be
the connected stabilizer of $x$; it is a finite index subgroup of
$\Stab(x)$.\footnote{If $x\in\Ao$, this definition agrees with our
  previous definition.  Indeed, this follows from the fact that for
  $x\in\Ao$, $\Stab(x)$ is generated by $\Stab_{N(\hT)}(x)$ and the
  $\hU_{\a, x}$'s~\cite[\S 8]{Land}.}  The corresponding Lie
subalgebra is denoted $\hfg_x$.  The parahoric subgroups are in
bijective correspondence with the facets of $\B(\hG)$.  Indeed,
$\hG_x=\hG_y$ if and only if $x$ and $y$ are in the same facet.  In
particular, the parahoric subgroups only depend on the image of $x$ in
$\bB(\hG)$.  The maximal parahoric subgroup $G(\fo)$ corresponds to
the origin in $X_*(T\cap[G,G])\otimes_\Z \R$.  See
\cite{BT1,BT2,Ti,Land} for more details.

We will also need to consider the Bruhat-Tits building $\B(\Aut(\sG))$
for the $F$-group of automorphisms of the formal principal $G$-bundle
$\sG$.  This may
be defined in exactly the same way, starting with a fixed
maximal torus of $\Aut(\sG)$. Alternatively, any trivialization of $\sG$
induces an isomorphism between $\B(\Aut(\sG))$ and $\B(\hG)$, and in
fact, we see that $\B(\Aut(\sG))=\B(\hG)\times^{\hG}\sG$.

\subsection{Moy-Prasad filtrations}\label{mpfilt2}

We now define filtrations associated to points in $\B(\hG)$.

Let $V$ be a finite-dimensional representation of $G$ over $k$, and
let $\hV$ be the corresponding representation of
$\hG$.  For any $x\in \B(\hG)$, the Moy-Prasad filtration associated to $x$
is a decreasing $\R$-filtration $\{\hV_{x, r} \mid r \in \R\}$ of
$\hV$ by $\fo$-lattices~\cite{GKM06,MP1}.  We briefly review the
construction.  First, assume $x\in\Ao$.  In this case, the filtration
can be obtained from an $\R$-grading on $V\otimes_k k[z,z^{-1}]$ (cf.
~\cite{GKM06}).  If $\chi \in X^*(T)$, let $V_\chi \subset V$ be the
weight space $\{v \in V \mid s v = \chi(s) v \quad \forall s \in T
\}$.  The $r$th graded subspace is defined to be
\begin{equation}\label{graded}
\hV_{x,\Ao} (r) = \bigoplus_{\chi (x) + m = r} V_\chi z^m \subset \hV.
\end{equation}
Note that the set $\{r\mid \hV_{x,\Ao} (r)\ne 0\}$ is discrete
and
closed under translations by $\Z$.  For any $r \in \R$, define
\begin{equation*}
\begin{aligned}
\hV_{x, r} & = \prod_{s \ge r} \hV_{x,\Ao}(s)\subset \hV; &
\hV_{x, r+} & = \prod_{s > r} \hV_{x,\Ao} (s) \subset \hV.
\end{aligned}
\end{equation*}
If $x\in\B(\hG)$ is arbitrary, we write $x=gy$ for $g\in\hG$, $y\in\Ao$,
and set $\hV_{x,r}=g\hV_{y,r}$.  It is shown in \cite{MP1} that this
definition is independent of the choice of $g$ and $y$.

The collection of lattices $\{\hV_{x, r}\}$ is the Moy-Prasad
filtration on $\hV$ associated to $x$.  It is immediate that
$\hV_{x,r+1}=z\hV_{x,r}$.  Note that if $x\in\Ao$, $\hV_{x, r} /
\hV_{x, r+}\cong \hV_{x,\Ao}(r) \ne \{0\}$ if and only if there exists
$\chi \in X^*(T)$ such that $V_\chi \ne \{0\}$ and $r - \chi (x) \in
\Z$.  We call the real numbers for which $\hV_{x,r}\ne\hV_{x,r+}$ the
\emph{critical numbers} of $V$ at $x$, and denote the set of such
points by $\Crit_x(V)$.  We will write $\Crit_x$ for $\Crit_x(\fg)$,
the critical numbers of the adjoint representation.  The set
$\Crit_x(V)$ is a discrete subset of $\R$ closed under translation by
$\Z$.  If the set of weights in $V$ is closed under inversion, then
$\Crit_x(V)$ is also symmetric around $0$.  In particular, this is the
case for representations on which $\fz$ acts trivially such as the
adjoint and coadjoint representations $\fg$ and $\fg^\vee$.  We further
observe that when $\fz$ acts trivially, the Moy-Prasad filtrations
depend only on the image $\bx\in\bB(\hG)$ of $x$.

Moy-Prasad filtrations are canonically associated to points in the
building, but this is not true for gradings.  Given $T'\subset G$ a
maximal torus and $x'\in\A'=\A(\hT')$, one can define an analogous
grading $\hV_{x',\A'}(r)$; moreover, if $T'=gTg^{-1}$ and $gx=x'$ for
some $g\in G$, then $\hV_{x',\A'}(r)=g\hV_{x,\Ao}(r)$.  Accordingly, if
$gx=x$, but $gTg^{-1}\ne T$, we need not have
$\hV_{x,\A'}(r)=\hV_{x,\Ao}(r)$, so the grading depends on a choice of
apartment containing $x$.  In this paper, we will only consider
gradings coming from $\Ao$, so we will usually simplify notation by
writing $\hV_x(r)$ instead of $\hV_{x,\Ao}(r)$.

We will also need to consider Moy-Prasad filtrations on the space of
$1$-forms $\Om^1(\hV)$ and on the smooth $k$-dual space $\hfg^\vee$.
We define $\Om^1(\hV)_{x,r}$ to be the image of $\hV_{x,r}$ via the
 isomorphism $\hV\to\Om^1(\hV)$ given by $u\mapsto u\ddz$.    Next,
 our fixed  nondegenerate invariant symmetric bilinear form $\left<,
 \right>$ on $\fg$ induces an isomorphism $\hfg\to\hfg^\vee$ given by
 $X\mapsto \Res\left<X,\cdot\right>\ddz$.  We set
$\hfg^\vee_{x,r}$ equal to the image of $\hfg_{x,r}$ under this
isomorphism.  Note that composing the isomorphisms
$\Om^1(\hfg)\to\hfg$ and $\hfg\to\hfg^\vee$ gives an isomorphism
$\Om^1(\hfg)\to\hfg^\vee$ which preserves Moy-Prasad filtrations.

There is also a corresponding filtration
$\{\hG_{x,r}\}_{r\in\R_{\ge 0}}$ of the parahoric subgroup
$\hG_x=\hG_{x,0}$ for $x\in\B(\hG)$~\cite[Section 2.6]{MP1}.  To define
$\hG_{x,r}$, first let $\hT_r$ be the subgroup of $\hT$ generated by
the images of the $\lceil r\rceil$th congruent subgroup in
$\fo^\times$ under each cocharacter $\lambda\in X_*(T)$.  Similarly,
if $\psi\in\Phi$, let $\hU_{\psi, x, r}$ be the image of
$z^{\lceil r-\psi(x)\rceil}\fo$ under the isomorphism
$F\cong\hU_\psi$.  The subgroup $\hG_{x,r}$ is generated by the
$\hT_r$ and the $\hU_{\psi, x, r}$'s.  We set
$\hG_{x, r+}=\bigcup_{s>r}\hG_{x,s}$.  In particular, we write
$\hG_{x+}=\hG_{x,0+}$; it is the pro-unipotent radical of
$\hG_x$~\cite[p.397]{MP1}.  (We use similar notation for the Lie
algebras: $\hfg_{x}=\hfg_{x,0}$ and $\hfg_{x+}=\hfg_{x,0+}$.)  For
$r > 0$, there is a natural isomorphism
$\hG_{x, r} / \hG_{x, r+} \cong \hfg_{x, r} / \hfg_{x,
  r+}$~\cite[p.399]{MP1},
so that this quotient group is unipotent.  However,
$\hG_{x} / \hG_{x+}$ is reductive.  Indeed, in Section~\ref{subfilt},
we will construct an explicit isomorphism between this group and a
maximal rank reductive subgroup of $G$. The parahoric subgroup
$\hG_{x}$ stabilizes $\hV_{x, r}$, and $\hV_{x, r}/\hV_{x,r+}$ is a
representation of $\hG_{x} / \hG_{x+}$.  Note that when $G$ is a
torus, i.e., $G=T$, then the reduced building is a point.
Accordingly, there is a unique Moy-Prasad filtration on $\hT_0=T(\fo)$
and $\hft$; the filtration on $T(\fo)$ is given by the subgroups
$\hT_r$ above.

One can similarly define Moy-Prasad filtrations on the vector bundles
$V_\sG$ parameterized by points in $\B(\Aut(\sG))$.  Of course, fixing
a trivialization identifies these filtrations with filtrations on the
$F$-vector spaces $\hV$.  To be explicit, let $\phi$ be a
trivialization of $\sG$.   Given $x\in \B(\Aut(\sG))$, let $\phi(x)$
be the induced point in $\B(\hG)$.  Then, the trivialization $\phi_V$
identifies $(V_\sG)_{x,r}$ and $\hV_{\phi(x),r}$ for all $r$.

\subsection{Strata and flat $G$-bundles}\label{s:gstrata}

In this section, we generalize the geometric theory of strata for
$\GL_n$ given in \cite{BrSa1} to arbitrary reductive groups.  More
precisely, given a formal flat $G$-bundle $(\sG,\n)$, its leading term
with respect to a Moy-Prasad filtration is described either in terms
of a $\sG$-stratum or in terms of a trivialization and a
$\hG$-stratum.

\begin{defn}
  Let $x \in \B(\hG)$ and let $r \ge 0$ be a real number.  A
  \emph{$\hG$-stratum} of depth $r$ is a triple $(x, r, \b)$ such that
  $\b \in (\hfg_{x, r}/\hfg_{x, r+})^\vee$.
\end{defn}

One can similarly define $\sG$-strata associated to points in
$\B(\Aut(\sG))$.  For a stratum $(x,r,\b)$ of this type,
$x\in\B(\Aut(\sG))$ and $\b \in ((\fg_\sG)_{x, r}/(\fg_\sG)_{x,
  r+})^\vee$.

Recall from geometric invariant theory that if $W$ is a representation
of a reductive group $H$, then a point $w\in W$ is called unstable if
$0$ is in the Zariski closure of the orbit $H\cdot w$; otherwise, it
is semistable.  In characteristic zero, $w$ is unstable if and only if
there exists a one-parameter subgroup $\g:\Gm\to H$ such that
$\lim_{t\to 0}\g(t)\cdot w=0$~\cite{Kempf}.  For example, $X\in\hfg$
is unstable if and only if it is nilpotent.  We call a
functional in $\hfg^\vee$ to be nilpotent if it is unstable.

\begin{defn} \label{fund} We say that a stratum $(x,r,\b)$ is
  \emph{fundamental} if the functional $\b$ is a semistable point of
  the $\hG_x/\hG_{x+}$-representation $(\hfg_{x, r}/\hfg_{x,
    r+})^\vee$.
\end{defn}
 Note that a $\GL(\hV)$-stratum is the same thing as a
  $\hV$-stratum in the sense of Definition~\ref{ustrat}.

\begin{rmk} Observe that  $(x,r,\b)$ can only be fundamental when $r\in\Crit_x$.
\end{rmk}

We will give some equivalent conditions which are easier to compute in
Proposition~\ref{nilpotent}.  For example, we will see in
Proposition~\ref{dual} that $(\hfg_{x, r}/\hfg_{x,
  r+})^\vee$ may be identified with $\hfg^\vee_{x, -r} / \hfg^\vee_{x, -r+}$.  We will
call $\tb \in \hfg^\vee_{x, -r}$ a \emph{representative} for $\b$ if
$\b$ corresponds to $\tb+\hfg^\vee_{x, -r+}\in\hfg^\vee_{x, -r} /
\hfg^\vee_{x, -r+}$.  Then, $(x,r,\b)$ is fundamental if and only if
every representative $\tb$ is nonnilpotent.

We now show how to associate strata to formal flat $G$-bundles.
Recall that $\hfg^\vee\cong \Om^1(\hfg)$ and similarly
$\fg_\sG^\vee\cong \Om^1(\fg_\sG)$.  In both cases, we will let
$X_{\tb}$ be the one-form corresponding to the functional $\tb$.

\begin{defn}\label{stratadefabstract}
  A flat $G$-bundle $(\sG, \n)$ contains the $\sG$-stratum $(x, r, \b)$
  if, for any (or
  equivalently, some) representative $\tb \in {\fg_\sG^\vee}_{x, -r}$ of
  $\b$,
\begin{equation}\label{stratadefabstracteq}(\n- i \ddz - X_{\tb})((V_\sG)_{x, i}) \subset \Om^1(V_{\sG})_{x,
  (i-r)+}, \quad\forall i\in\R, V\in\Rep(G).
\end{equation} 
\end{defn}

It will be convenient to make use of a similar concept involving an
explicit trivialization.
\begin{defn}\label{stratadef}
  A flat $G$-bundle $(\sG, \n)$ contains the $\hG$-stratum $(x, r, \b)$
  with respect to the trivialization $\phi$ if, for any (or
  equivalently, some) representative $\tb \in \hfg^\vee_{x, -r}$ of
  $\b$,
\begin{equation}\label{stratadefeq}(\n_\phi - i \ddz - X_{\tb})(\hV_{x, i}) \subset \Om^1(\hV)_{x,
  (i-r)+}, \quad\forall i\in\R, V\in\Rep(G).
\end{equation} 
\end{defn}
Lemma~\ref{strataequivariance} states that  if $(\sG, \n)$ contains the stratum $(x, r, \b)$ with
respect to $\phi$, then it contains $(gx,r,g\b)$ with respect to
$g\phi$. 

Since $X_{\tb}(\hV_{x, i}) \subset \Om^1(\hV)_{x, i-r}$ and
$i\ddz(\hV_{x, i}) \subset \Om^1(\hV_{x,i})\subset\Om^1(\hV)_{x,
  i-r}$, an immediate consequence of \eqref{stratadefeq} is that
$\n_\phi (\hV_{x, i}) \subset \Om^1(\hV)_{x, i-r}$ for all $i$.  Also,
note that the term $i\ddz$ can be omitted from \eqref{stratadefeq} if
$r>0$.  Similar considerations apply to Definition~\ref{stratadefabstract}.

As we will see in Proposition~\ref{strataprop}, when $x\in\Ao$,
stratum containment with respect to $\phi$
is equivalent to a concrete and easily verified condition on the
matrix $[\n]_\phi$.  

Just as for flat vector bundles, there is a notion of regular and
irregular singularity for a flat $G$-bundle.  
\begin{defn}
  We say that a formal flat $G$-bundle $(\sG, \n)$ is \emph{regular
    singular} if for all representations $V$, the associated flat
  vector bundle $(V_\sG, \n_V)$ is regular singular.  Otherwise, it is
  called \emph{irregular singular}.
\end{defn}

As one would hope, a flat $\GL_n$-bundle is regular singular if and
only if the corresponding flat vector bundle is regular singular.
This is indeed true; see Corollary~\ref{glslope}.

It is more subtle to define the \emph{slope} of $(\sG,\n)$.  One of
the major results of this paper is using the theory of fundamental strata to
provide the appropriate generalization.

\subsection{Main theorems}\label{main}

We can now state the main results of the paper.  The proofs are given
in Section~\ref{proofs}.

In the following theorem, $\Ao\subset\B(\hG)$ is the apartment
corresponding to the split maximal torus $\hT\subset\hG$, where $T$ is
a fixed maximal torus in $G$.  We call a point in $\B(\hG)$
(resp. $\B(\Aut(\sG))$) rational if for every $V\in\Rep(G)$, the
critical numbers for the filtration on $\hV$ (resp. $V_\sG$) are
rational.  We denote the set of rational points in $\B(\Aut(\sG))$,
$\B(\hG)$, and $\Ao$ by $\B(\Aut(\sG))^{\mathrm{rat}}$,
$\B(\hG)^{\mathrm{rat}}$, and $\Ao^{\mathrm{rat}}$ respectively.  We
call a stratum rational if it is based at a rational point in the
building.  \emph{Optimal points} are certain points in $\Ao$ that
generalize the barycenters of simplices in the reduced building for
$\GL_n$.  (See Section~\ref{proofs} for the definition.)  One of their
nice properties is that they lie in $\Ao^{\mathrm{rat}}$.

We only wish to consider nonrational strata when $\R\subset k$.
Accordingly, throughout the paper, we adopt the following convention:
\begin{conv}  All strata are assumed to be rational without further
  comment unless $\R\subset k$.
\end{conv}

\begin{thm}\label{MP}
  Every flat $G$-bundle $(\sG, \n)$ contains a fundamental
  $\sG$-stratum $(x,r,\b)$ with $x\in\B(\Aut(\sG))^{\mathrm{rat}}$;
  the depth $r$ is positive if and only if $(\sG, \n)$ is irregular
  singular.  More precisely, $x$ can be chosen to be the preimage of
  an optimal point in $\Ao$ under some trivialization
  $\phi$. Moreover, the following statements hold.
\begin{enumerate}
\item\label{MP1} If $(\sG, \n)$ contains the stratum $(y,r', \b')$, then
$r' \ge r$.  
\item \label{MP3} If $(\sG,\n)$ is irregular singular, a stratum
  $(y,r', \b')$ contained in $(\sG, \n)$ is fundamental if and only if
  $r' = r$.
\end{enumerate}
\end{thm}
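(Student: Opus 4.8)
The plan is to reduce the statement about flat $G$-bundles to the corresponding statement about flat vector bundles, which should hold by a Moy--Prasad-style argument on the building, and then transport the nondegeneracy information back and forth via a faithful representation. First I would fix a faithful representation $\rho\colon G\hookrightarrow\GL(V)$ and note that for any point $x\in\B(G)$ and any $V\in\Rep(G)$, the Moy--Prasad filtration on $\hfg$ embeds compatibly into the induced filtration on $\gl(V)\otimes F$ (this is part of the machinery of Section~\ref{MPfilt}). Given a flat $G$-bundle $(\sG,\n)$, the associated flat vector bundle $(\hV,\n_V)$ then has a well-defined slope $s:=\slope(\hV,\n_V)$, and by the theory of $U$-strata recalled in Section~\ref{flatvb}, $(\hV,\n_V)$ contains a fundamental $U$-stratum of depth $s$ based at an optimal point, with all fundamental $U$-strata having depth $s$. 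So the first task is to show that the depth $r$ of a fundamental $G$-stratum contained in $(\sG,\n)$ equals $s$; this gives a clean characterization of $r$ and immediately defines the slope.

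Next I would establish part~\eqref{MP1}. Suppose $(\sG,\n)$ contains $(y,r',\b')$. Choosing a representative $\ttau$ for $\b'$, the matrix $[\n]_\phi$ for a suitable trivialization satisfies the containment condition \eqref{vbcontain} relative to the filtration $\hfg_{y,\bullet}$, hence (pushing forward along $\rho$) relative to $\gl(V)_{y,\bullet}$, so that $(\hV,\n_V)$ contains the $U$-stratum $(y,r',\rho_*\b')$. The key point is that containment of a depth-$r'$ stratum in a flat vector bundle forces $r'\ge\slope$; this is precisely the inequality half of the $\GL_n$ theory from \cite{BrSa1} (reproved here via the $U$-stratum formalism — see Remark~\ref{vbstratarem} and the appendix). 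Therefore $r'\ge s$. Combined with the existence of a fundamental $G$-stratum of depth $r$ (which we must still identify with $s$) and the fact that $r\le$ any depth, we will get $r=s$ and $r'\ge r$.

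For part~\eqref{MP3}, assume $(\sG,\n)$ is irregular singular, so $s>0$. One direction: if $(y,r',\b')$ is contained in $(\sG,\n)$ and $r'=r=s$, then the induced $U$-stratum $(y,r',\rho_*\b')$ is a depth-$s$ stratum contained in $(\hV,\n_V)$; in the positive-slope case such minimal-depth $U$-strata are fundamental (the $\GL_n$ theorem), and since $\rho$ is faithful, nonnilpotence of a representative of $\rho_*\b'$ in $\gl(V)^*$ implies nonnilpotence of a representative of $\b'$ in $\hfg^*$ — here one uses that an element of $\fg$ is nilpotent iff its image under a faithful representation is. Hence $(y,r',\b')$ is fundamental. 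Conversely, if $(y,r',\b')$ is fundamental, then $(y,r',\rho_*\b')$ is a fundamental $U$-stratum, so by the $\GL_n$ uniqueness-of-depth result $r'=s=r$.

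The main obstacle I anticipate is the step identifying $r$ with $s$ and, relatedly, controlling the behavior of nilpotence under the faithful representation at the level of the graded pieces $\hfg_{x,r}/\hfg_{x,r+}$ versus $\gl(V)_{x,r}/\gl(V)_{x,r+}$: one needs that a representative $\ttau\in\hfg^*_{x,-r}$ is nilpotent (in the transported sense) if and only if the induced element of $\gl(V)^*_{x,-r}$ is, which requires knowing that the grading on the $\fg$-side is compatibly embedded and that the notion of nilpotent element of $\fg$ defined via the building agrees with the abstract one. A further subtlety is that the existence statement in Theorem~\ref{MP} asserts the fundamental stratum can be taken at an \emph{optimal} point; matching this against the optimal (barycentric) points used in the $\GL_n$ theory, and checking the rationality of critical numbers at optimal points, is where the geometry of $\B(G)$ versus $\B(\GL(V))$ must be invoked carefully.
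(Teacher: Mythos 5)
Your overall plan—reducing the statement about flat $G$-bundles to the $\GL_n$ theory of $U$-strata via a faithful representation $\rho\colon G\hookrightarrow\GL(V)$—has a genuine gap in the existence step, and I don't think it can be patched by the methods you outline.

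The pushforward you use (a $G$-stratum $(x,r,\b)$ contained in $(\sG,\n)$ yields a $\GL(V)$-stratum $(\rho_*(x),r,\rho(\b))$ contained in $(\hV,\n_V)$, and this preserves fundamentality in both directions because nilpotence in $\fg$ is detected by a faithful representation) is valid; this is exactly Corollary~\ref{rhostrat} and the logic behind Corollary~\ref{upperbound}. But this goes only one way. The $\GL_n$ existence theorem produces a fundamental $U$-stratum $(y,s,\b'')$ contained in $(\hV,\n_V)$ based at a point $y\in\B(\GL(V))$, and there is no reason for $y$ to lie in the image of $\rho_*\colon\B(G)\to\B(\GL(V))$, nor for the graded representative of $\b''$ to lie in $\rho(\hfg^\vee)$. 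So the $\GL(V)$-side existence gives you nothing you can pull back to a $G$-stratum. Your argument for~\eqref{MP1} then becomes circular: you write ``combined with the existence of a fundamental $G$-stratum of depth $r$ (which we must still identify with $s$),'' but that existence is precisely the main content of the theorem and is never established. The same issue infects~\eqref{MP3}.

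The paper's proof does not proceed by reduction to $\GL_n$. It works directly on $\B(G)$: one fixes a chamber $C\subset\Ao$, observes that the (finite) set of depths of strata contained in $(\sG,\n)$ at optimal points of $\bar C$ attains a minimum $r$, and then uses Lemma~\ref{unstable}—which rests on Moy--Prasad's result \cite[Proposition 6.3]{MP1} about moving a degenerate coset in $\hfg^\vee_{x,-r}$ to a smaller-depth position at an \emph{optimal} point—to show that a stratum of minimal depth at an optimal point must be fundamental when $r>0$; and then \cite[Proposition 6.4]{MP1} (via Proposition~\ref{minr}) gives the minimality statement~\eqref{MP1}. This is genuinely a statement about the geometry of $\B(G)$ and cannot be routed through $\B(\GL(V))$. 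Finally, the $r=0$ (regular singular) case of the existence statement requires a separate, rather delicate argument perturbing $x$ by a multiple of the half-sum of positive coroots of $H_x$ to produce a fundamental depth-$0$ stratum at a neighboring optimal point; your proposal does not touch this case at all. The subtlety you flag at the end (compatibility of optimal points and rationality) is real but secondary—the missing piece is the whole $G$-side existence argument.
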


 It is an important question to understand the set of strata contained
 in a given flat $G$-bundle.  As a first step in this direction, we
 have shown that two such strata of the same depth $r$ are
 \emph{associates} of each other.  The formal definition is given in
 Definition~\ref{assdef}.

\begin{thm}\label{associateness}  Suppose
  that $(\sG, \n)$ contains the $\hG$-strata $(x, r, \b)$ and
  $(y, r, \b')$ with respect to the trivializations $\phi$ and $\phi'$
  respectively.  Then, $(x, r, \b)$ and $(y, r, \b')$ are associates
  of each other.  In particular, all fundamental strata contained in
  $(\sG, \n)$ are associates of each other.
\end{thm}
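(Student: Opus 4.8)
The plan is to reduce the statement to a fixed trivialization and then to a linear-algebra comparison of functionals on associated graded spaces of Moy--Prasad filtrations. First, since the two strata are contained in $(\sG,\n)$ with respect to possibly different trivializations $\phi$ and $\phi'$, I would first observe that ``associate'' is defined so as to be insensitive to the choice of trivialization: a gauge transformation $g\in\hG$ sending $\phi$ to $\phi'$ carries the matrix $[\n]_\phi$ to $[\n]_{\phi'}$ via \eqref{gauge}, and the containment condition of Definition~\ref{stratadef} for $(x,r,\b)$ with respect to $\phi$ is then equivalent to containment of $g\cdot(x,r,\b)$ with respect to $\phi'$. So after replacing $(x,r,\b)$ by this gauge-transformed stratum, I may assume both strata are contained in $(\sG,\n)$ with respect to the \emph{same} trivialization $\phi$; write $\omega=[\n]_\phi\in\Om^1_F(\hfg)\cong\hfg^*$ for the corresponding functional.

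Next I would extract from the containment condition an explicit representative for each functional. The key point is that if $(x,r,\b)$ is contained in $(\sG,\n)$ via $\phi$, then $\iota_\tau(\omega)\in\hfg_{x,-r}$ and $\b$ is the image of $\omega$ under the projection $\hfg^*_{x,-r}\to\hfg^*_{x,-r}/\hfg^*_{x,-r+}\cong(\hfg_{x,r}/\hfg_{x,r+})^\vee$ — i.e.\ $\omega$ itself is a simultaneous representative of both $\b$ and $\b'$, now sitting in $\hfg^*_{x,-r}\cap\hfg^*_{y,-r}$. Thus the two strata ``see'' the same element $\omega$ of $\hfg^*$, differing only in which Moy--Prasad filtration is used to truncate it. The definition of associate (Definition~\ref{assdef}) should amount precisely to the existence of such a common representative together with a compatibility of the two filtrations at level $-r$ — concretely, that $\omega$ lies in $\hfg^*_{x,-r}$ and in $\hfg^*_{y,-r}$ and its images are nondegenerate in the appropriate sense. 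I would verify that the condition defining associateness is exactly what the shared $\omega$ provides, using the fact (recorded in the preliminaries) that the pairing $(X,Y)\mapsto\Res\Tr$ — or $\langle\cdot,\cdot\rangle$ in the $G$-setting — identifies the graded pieces of the filtration on $\hfg$ with those on $\hfg^*$.

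For the final sentence, I would apply Theorem~\ref{MP}: every fundamental stratum contained in $(\sG,\n)$ has depth equal to the common value $r$ appearing there (depth $r'=r$ by part~\eqref{MP3} in the irregular case, and $r=0$ in the regular singular case, where the statement is either vacuous or follows from the analogous depth-zero bookkeeping). Hence any two fundamental strata contained in $(\sG,\n)$ have the \emph{same} depth, so the first part of the theorem applies to them and shows they are associates. The step I expect to be the main obstacle is the first one: checking that the notion of associate is genuinely gauge-invariant and that the containment condition transforms correctly under \eqref{gauge}, since the Maurer--Cartan term $(dg)g^{-1}$ must be shown to land in a low enough piece of the filtration not to disturb the degree-$(-r)$ part of $\omega$ — this requires a careful estimate on $\iota_\tau((dg)g^{-1})$ relative to $\hfg_{x,\bullet}$, and is where the real work lies.
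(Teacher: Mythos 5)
Your approach is essentially the paper's: reduce to a common trivialization via gauge equivariance, move both base points into the standard apartment, read off a common element of $\hfg^\vee$ from Proposition~\ref{strataprop}, and observe this common element witnesses \eqref{asseq}. That much is right, and for $r>0$ your claim that $\omega=[\n]_\phi$ is a simultaneous representative of $\b$ and $\b'$ is literally correct, since the shifts $\tx\ddz,\ty\ddz$ live in $\hfg^\vee_{x,0}$ and $\hfg^\vee_{y,0}$, which lie in $\hfg^\vee_{x,-r+}$ and $\hfg^\vee_{y,-r+}$ respectively.

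However, there are two points worth flagging. First, for the depth-zero case your argument does not close: $[\n]_\phi$ need not lie in $\tb+\hfg^\vee_{x,0+}$ when $r=0$ because $\tx\ddz\notin\hfg^\vee_{x,0+}$ in general, so it is not a common representative of both functionals. This is exactly what the $\de_{x,y}$ term in Definition~\ref{assdef} exists to absorb. The paper's proof pins it down by taking $\de_{x,y}=(\tx-\ty)\ddz$ and observing that $[\n]_\phi-\tx\ddz$ lies in the intersection $\bigl(\tb+\hfg^\vee_{x,-r+}\bigr)\cap\bigl(\tb'-\de_{x,y}+\hfg^\vee_{y,-r+}\bigr)$; your proposal gestures at ``compatibility of the two filtrations'' but never identifies the correction term, so the $r=0$ case is left unproven. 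Second, you identify the main obstacle as the gauge-invariance of containment and the control of $\iota_\tau((dg)g^{-1})$, but this is already established as Lemma~\ref{strataequivariance} (a one-line equivariance calculation with no filtration estimate needed) together with the observation that conjugate strata are automatically associates. The substantive content of the theorem is precisely the $\de_{x,y}$ bookkeeping you treated as a side remark.
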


We now define the slope of a flat $G$-bundle.

\begin{defn} The \emph{slope} of the flat $G$-bundle $(\sG,\n)$ is the depth
  of any fundamental stratum contained in $(\sG,\n)$.
\end{defn}

This definition makes sense by Theorem~\ref{MP}.  It also follows
that the slope is always an \emph{optimal number} in the sense of
\cite{AD}, i.e., a critical number for the filtration on $\hfg$ determined
by an optimal point.

\begin{thm}\label{slopethm} The slope of the flat $G$-bundle $(\sG,\n)$ is a
  nonnegative rational number.  It is positive if and only if
  $(\sG,\n)$ is irregular singular.  The slope may also be
  characterized as
\begin{enumerate}
\item  the minimum depth of any stratum contained in
  $(\sG,\n)$; 
\item  the minimum depth of any stratum contained in
  $(\sG,\n)$ and based at an optimal point; 
\item the maximum slope of the associated flat vector bundles
  $(V_\sG,\n_V)$; or
\item the maximum slope of the flat vector bundles associated to the
  adjoint representations and the characters.
\end{enumerate}
\end{thm}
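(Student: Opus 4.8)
The proof of Theorem~\ref{slopethm} will build almost entirely on Theorems~\ref{MP} and~\ref{associateness}, which have already reduced the work to extracting consequences. I will organize the argument around the single number $r$ defined as the common depth of all fundamental strata contained in $(\sG,\n)$; by Theorem~\ref{associateness} this is well-defined, and by Theorem~\ref{MP} it equals the depth of the fundamental stratum based at an optimal point produced there, so $r$ is automatically an optimal number. Rationality of $r$ is then immediate: the critical numbers of the Moy--Prasad filtration on $\hfg$ at an optimal point are rational (as asserted in the paragraph preceding Theorem~\ref{MP}), and the slope is such a critical number. The equivalence ``slope $>0$ iff irregular singular'' is just the corresponding clause of Theorem~\ref{MP} restated.

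**The four characterizations.** For (1), Theorem~\ref{MP}\eqref{MP1} gives $r'\ge r$ for every stratum $(y,r',\b')$ contained in $(\sG,\n)$, while the fundamental stratum from Theorem~\ref{MP} realizes the value $r$; hence $r$ is the minimum. Characterization (2) follows the same way, since the extremal stratum provided by Theorem~\ref{MP} is already based at an optimal point, so the minimum over optimal-point strata cannot exceed $r$, and it is bounded below by $r$ by the same \eqref{MP1}. For (3), I would argue in two directions. First, given $V\in\Rep(G)$, pushing a stratum contained in $(\sG,\n)$ forward along $\rho\colon G\to\GL(V)$ produces a $V$-stratum contained in $(V_\sG,\n_V)$ of the same or smaller depth (the Moy--Prasad filtration on $\hfg$ maps into the filtration on $\gl(V)\otimes F$ compatibly, and nonnilpotence can only be lost, not gained, under a representation — so a fundamental $G$-stratum may map to a nonfundamental $V$-stratum, but containment persists). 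Combined with the comparison theorem from~\cite{BrSa1} relating depth of a contained stratum to the slope of a flat vector bundle, this shows $\slope(V_\sG,\n_V)\le r$ for all $V$. Conversely, I need some single representation whose associated flat vector bundle has slope exactly $r$; the natural candidate is to take a fundamental $G$-stratum $(x,r,\b)$ and a representation on which a representative $\tb$ of $\b$ acts non-nilpotently — such a representation exists precisely because $\tb$ is a non-nilpotent element of $\hfg^*\cong\hfg$ (via the fixed form), so it has a non-nilpotent image in some $\gl(V)$. That $V_\sG$ then has slope $r$ follows by pushing forward the fundamental stratum and invoking the $\GL_n$ result that a fundamental stratum contained in a flat vector bundle computes its slope. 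Characterization (4) then refines (3): the ``adjoint representations and the characters'' (i.e.\ the adjoint action of $G$ on $\fg$ together with the characters of $G$) suffice to detect non-nilpotence, because an element of $\fg$ is nilpotent if and only if it is nilpotent in the adjoint representation and the semisimple part in the center vanishes — more precisely, writing $\fg=\fz(\fg)\oplus[\fg,\fg]$ via the fixed form, a representative $\tb$ decomposes accordingly, and $\tb$ is non-nilpotent iff either its central component is nonzero (detected by some character) or its semisimple part in $[\fg,\fg]$ is nonzero (detected by the adjoint representation). So the maximum slope over this restricted family of flat vector bundles already reaches $r$, and it is still $\le r$ by the argument for (3).

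**Main obstacle.** The delicate point is the converse half of characterization (3)–(4): showing that the slope genuinely \emph{drops to nothing below $r$} in every representation, i.e.\ that pushing a \emph{non}-fundamental $G$-stratum forward cannot somehow create extra irregularity in $V_\sG$, and dually that the fundamental stratum survives (remains fundamental, hence slope-computing) in a well-chosen $V$. The first is really the statement that containment of a stratum is preserved under $\rho_*$ and that the depth of the pushed-forward stratum is $\le r$; this requires checking that $\rho(\hfg_{x,s})\subset\gl(V)_{\rho(x),s}$ for the induced point $\rho(x)\in\B(\GL(V))$ — a compatibility of Moy--Prasad filtrations under representations that I expect is established in Section~\ref{MPfilt}, so I would cite it rather than reprove it. The genuinely non-formal input is the existence of a representation in which the image of $\tb$ is non-nilpotent: for this I would reduce to the structure theory above, handling the central part via characters and the semisimple-modulo-center part via the adjoint representation, and then invoke (for that specific $V$) the already-proved $\GL_n$ version of the theorem — that a fundamental $U$-stratum contained in $(U,\n)$ has depth equal to $\slope(U,\n)$. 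Once that single reduction is in place, all four characterizations fall out by combining the two inequalities ``$\le r$ always'' and ``$=r$ for a suitable $V$'' with Theorem~\ref{MP}.
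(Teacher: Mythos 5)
Your proposal is correct and follows essentially the same route as the paper, which simply assembles the already-established Theorem~\ref{MP}, Corollary~\ref{upperbound}, and Proposition~\ref{adjchar}; in particular, your decomposition $\fg=\fz\oplus[\fg,\fg]$ with characters detecting the central part and the adjoint detecting the semisimple part of the representative is exactly the content of Proposition~\ref{adjchar}. The only cosmetic difference is that you offer a direct argument for characterization (3) via a faithful representation, whereas the paper obtains (3) as an immediate consequence of (4); both work, since a non-nilpotent element of $\hfg$ has non-nilpotent image under any faithful representation.
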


\begin{rmk} Since the category of formal flat $\GL_n(V)$-bundles is
  equivalent to the category of rank $n$ flat vector bundle, one would
  expect the notions of strata, stratum containment, and slope on the
  two categories to correspond.  This is indeed the case; see
  Corollaries~\ref{vbgl} and \ref{glslope}.
\end{rmk}

  If $E$ is a degree $e$ field extension of $F$, then $E$ is generated
  by an element $u_E$ with $u_E^e=z$.  We let $\pi_E : \Spec (E) \to
  \Spec(F)$ be the associated map of spectra.  The pullback of a flat
  $G$-bundle $(\sG,\n)$ to $\Spec(E)$ will be denoted by $(\pi_E^*\sG,
  \pi_E^*\n)$.

\begin{lem}\label{slopepullback} If a flat $G$-bundle $(\sG,\n)$ over
  $\Spec(F)$ has slope $r$, then $(\pi_E^*\sG, \pi_E^*\n)$ has slope
  $[E:F]r$.
\end{lem}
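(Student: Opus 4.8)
The plan is to construct directly a fundamental stratum of depth $e\,r$, where $e=[E:F]$, contained in $(\pi_E^*\sG,\pi_E^*\n)$. Since the slope of any formal flat $G$-bundle is the depth of any fundamental stratum it contains, and such a stratum always exists by Theorem~\ref{MP}, this will give $\slope(\pi_E^*\sG,\pi_E^*\n)=e\,r$. Write $u=u_E$ and $\tau_E=u\frac{d}{du}$; since $z=u^{e}$ one has $\tau_E=e\,\tau$, equivalently $\ddz=e\,\ddu$. By Theorem~\ref{MP} fix a trivialization $\phi$ of $\sG$ and a point $x\in\Ao$ so that $(\sG,\n)$ contains a fundamental stratum $(x,r,\b)$ with respect to $\phi$; by Proposition~\ref{strataprop} this amounts to $A:=\iota_\tau([\n]_\phi)$ lying in $\hfg_{x,-r}$ with homogeneous component $\tbo\in\hfg_x(-r)$ representing $\b$, and $\tbo$ is nonnilpotent by Proposition~\ref{nilpotent}. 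The trivialization $\phi$ also trivializes $\pi_E^*\sG$, and the identity $\ddz=e\,\ddu$ gives $\iota_{\tau_E}\big((\pi_E^*[\n])_\phi\big)=e\,A$.

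The second step is to transport the base point. Writing $\fg_E=\fg\otimes_kE$, define $x_E\in\Ao(E)$ by $\widetilde{x_E}=e\,\tx\in\ft_\R$ (a rational point when $x$ is optimal, so the argument also covers the general characteristic-zero setting). For every $V\in\Rep(G)$ the operator $\tau_E+\widetilde{x_E}$ equals $e(\tau+\tx)$ on the subspace $V\otimes_k k[z^{\pm1}]\subseteq V\otimes_k k[u^{\pm1}]$, so Proposition~\ref{ev}\eqref{ev1} gives $\hfg_x(s)\subseteq(\fg_E)_{x_E}(es)$ for all $s\in\R$; hence $\hfg_{x,s}\subseteq(\fg_E)_{x_E,es}$, and because $t\mapsto et$ is injective the $F$-homogeneous component in degree $t$ of an element of $\hfg_{x,s}$ becomes precisely its $E$-homogeneous component in degree $et$. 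Applying this to $A\in\hfg_{x,-r}$, whose degree $-r$ homogeneous part is $\tbo$, we conclude $\iota_{\tau_E}((\pi_E^*[\n])_\phi)=e\,A\in(\fg_E)_{x_E,-er}$ with homogeneous component $e\,\tbo$ in degree $-er$. By Proposition~\ref{strataprop} once more, $(\pi_E^*\sG,\pi_E^*\n)$ contains the stratum $(x_E,e\,r,\b_E)$ with respect to $\phi$, where $\b_E$ is represented by $e\,\tbo$ (when $r=0$ one simply carries the $-\tx\ddz$ term through, scaled by $e$).

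Finally, $e\,\tbo$ is nonnilpotent: $e\in k^{\times}$ and $\tbo$ is nonnilpotent, and nilpotency of an element of $\hfg$ is detected by the vanishing of a fixed power of its adjoint endomorphism, a condition unaffected by extension of scalars from $F$ to $E$; thus $e\,\tbo$ is a nonnilpotent element of $\fg_E$, and Proposition~\ref{nilpotent} shows $(x_E,e\,r,\b_E)$ is fundamental. This proves the lemma. The one point requiring care is the comparison of Moy--Prasad data under $F\hookrightarrow E$ together with the fact that the \emph{top} homogeneous part, not merely the ambient filtration step, is preserved; this is exactly what allows fundamentality, and not just a bound on depth, to pass to the pullback. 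Alternatively, one could reduce to flat vector bundles via the characterization of the slope in Theorem~\ref{slopethm} as the largest slope of the associated flat vector bundles, and then invoke the classical scaling of the Katz slope under a degree-$e$ ramified pullback, which is immediate from \eqref{irregprecise} and the estimate $v_E=e\,v_F+O(1)$ for $z$-adic valuations.
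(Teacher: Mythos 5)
Your argument is correct and follows essentially the same route as the paper's proof: both rescale the base point to $x_E$ with $\widetilde{x_E}=e\tx$, use $\tau_E=e\tau$ together with Proposition~\ref{ev}\eqref{ev1} to transport the Moy--Prasad grading, and then apply Proposition~\ref{strataprop} and Proposition~\ref{nilpotent} (the paper cites Remark~\ref{fundrmk} for the last step, but the content is the same) to conclude that the pullback stratum $(ex,er,\b')$ with representative $e\tbo$ is fundamental. The alternate route you sketch at the end, via Theorem~\ref{slopethm}(3) and the classical Katz-slope scaling, is a nice sanity check but is not needed and is not what the paper does.
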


\begin{prop}\label{FGslope}
  A flat $G$-bundle $(\sG, \n)$ has slope $r$ if and only if there
  exists a finite field extension $E/F$ and a trivialization $\phi$ of
  the pullback of $(\sG,\n)$ to $\Spec(E)$ such that
  \begin{equation} \label{stdform} (\pi_E^*[\n])_\phi = \sum_{i =
      -n}^\infty M_i u^{i} \ddu,
\end{equation}
with $M_i \in \fg$, $M_{-n}$ is nonnilpotent, and $r=n/[E:F]$.
\end{prop}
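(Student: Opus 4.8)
The plan is to reduce Proposition~\ref{FGslope} to Theorem~\ref{slopethm} and the behavior of slope under finite base change, with the ramified pullback playing the role of the ``uniformizing cover'' on which a stratum at an optimal point becomes a stratum at the origin of the degree filtration. The key point is that pulling back to $E$ with $u_E^e=z$ (where $e=[E:F]$) rescales the Moy--Prasad filtration indices by $e$: a point $x\in\Ao$ for $G$ over $F$ determines an optimal point $x_E$ over $E$ whose filtration satisfies $\hfg_E{}_{,x_E,er}=\hfg_{x,r}\otimes_\fo\fo_E$ in the appropriate sense, so the critical numbers get multiplied by $e$, and an optimal number $r$ with denominator dividing $e$ becomes an \emph{integer} $n=er$ over $E$. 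Moreover slope scales the same way: $\slope(\pi_E^*\n)=e\cdot\slope(\n)$. This last fact I would either cite from part (3) of Theorem~\ref{slopethm} (slope of a flat $G$-bundle is the max slope of its associated flat vector bundles, and for flat vector bundles the scaling of slope under ramified pullback is the classical statement in \cite{De}) or prove directly by tracking how the defining boundedness condition \eqref{irregprecise} transforms.

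The forward direction: suppose $\slope(\sG,\n)=r$. By Theorem~\ref{MP}, $(\sG,\n)$ contains a fundamental stratum $(x,r,\b)$ with $x$ optimal in $\Ao$; in particular $r$ is an optimal number, so there is an $e$ with $er=n\in\Z_{\ge 0}$ (and $r=n/e$). Pass to $E$ with $[E:F]=e$. The pulled-back stratum $(x_E,n,\b_E)$ is then based at a point whose filtration on $\hfg_E$ has \emph{integer} critical numbers; after a further gauge transformation by an element of $\hat{G}_E$ one can move this point to the origin of a suitable degree filtration (i.e., conjugate the relevant parahoric to $G(\fo_E)$), which is exactly the assertion that there is a trivialization $\phi$ of $\pi_E^*\sG$ in which $\iota_\tau((\pi_E^*[\n])_\phi)\in\fg\otimes\fo_E$ has a pole of order exactly $n$ with nonnilpotent leading coefficient $M_{-n}$. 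That $M_{-n}$ is nonnilpotent is precisely the fundamentality of the stratum transported through these identifications (fundamentality is a gauge-invariant, base-change-invariant condition because it is detected on the associated graded, which is unchanged). Writing $(\pi_E^*[\n])_\phi=\sum_{i=-n}^\infty M_iu^i\,\ddu$ gives \eqref{stdform} with $r=n/[E:F]$.

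The converse: suppose such an $E$ and $\phi$ exist with $M_{-n}$ nonnilpotent. Then $(x_{\mathrm{orig}},n,M_{-n}u^{-n}\,\ddu)$ is a fundamental stratum contained in $(\pi_E^*\sG,\pi_E^*\n)$ based at the origin of the degree filtration over $E$ (this is the $G$-analogue of the $\GL_n$ example in the excerpt, and fundamentality is the nonnilpotence of $M_{-n}$). Hence $\slope(\pi_E^*\n)=n$ by the definition of slope together with Theorem~\ref{slopethm}. Applying the scaling relation $\slope(\pi_E^*\n)=[E:F]\cdot\slope(\n)$ gives $\slope(\n)=n/[E:F]=r$, as claimed.

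The main obstacle is the precise bookkeeping in the forward direction: identifying the pullback of the Moy--Prasad filtration at an optimal point $x$ with a filtration at an \emph{integrally indexed} optimal point over $E$, and then checking that such a point can be moved to the origin of an honest degree filtration (equivalently, that its parahoric is $\hat{G}$-conjugate to $G(\fo_E)$) so that ``pole of order $n$'' literally means the Laurent expansion \eqref{stdform}. This is where one must be careful that ``optimal point'' over $F$ pulls back correctly and that the gauge transformation realizing the conjugation lies in $\hat{G}_E$ rather than only in the normalizer. I expect the rest — invariance of fundamentality under gauge and base change, and the scaling of slope — to be essentially formal given Theorems~\ref{MP} and~\ref{slopethm}.
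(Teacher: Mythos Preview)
Your converse direction and the slope-scaling lemma (``$\slope(\pi_E^*\n)=[E:F]\cdot\slope(\n)$'') match the paper exactly: the paper proves the scaling as Lemma~\ref{slopepullback} by defining a pullback $\pi^*(x,r,\b)=(ex,er,\b')$ on strata based in $\Ao$ and checking that it preserves fundamentality, then observes that \eqref{stdform} with $M_{-n}$ nonnilpotent is precisely containment of a fundamental stratum $(o,n,\b)$ at the origin of $\Ao(E)$.

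For the forward direction you diverge from the paper. The paper simply cites Babbitt--Varadarajan \cite[Theorem 9.5]{BaVa1}, which asserts directly that some finite pullback of $(\sG,\n)$ admits a trivialization with nonnilpotent leading term; slope scaling then gives $r=n/e$. Your internal argument---pull back a fundamental stratum at an optimal $x\in\Ao$ and gauge the base point to the origin over $E$---is workable and more self-contained, but the step you flag as the main obstacle has a genuine gap as written. Choosing $e$ so that $er\in\Z$ does \emph{not} force the filtration at $ex\in\Ao(E)$ to have integer critical numbers: you only get $e\a(\tx)\in\Z$ for the single root $\a$ (if any) contributing to the critical number $r$, not for all roots. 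The correct move is to use that optimal points are rational, so $\tx\in X_*(T)\otimes_\Z\Q$, and choose $e$ with $e\tx\in X_*(T)$; then every critical number at $ex$ is an integer (since $\a(e\tx)\in\Z$ for all $\a$), and moreover $u^{-e\tx}\in T(E)\subset\hat G_E$ is a genuine element whose gauge action translates $ex$ to the origin. Lemma~\ref{actlem}\eqref{act1} then shows that the resulting stratum at the origin is $\Ad^*(u^{-e\tx})$ of the pulled-back fundamental stratum (up to $\hat\ft_{0+}$, which is harmless), hence fundamental with depth $n=er$, giving \eqref{stdform}. With this correction your route avoids the external reference, at the cost of a slightly larger $e$ than the minimal one clearing the denominator of $r$ alone.
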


 \begin{rmk}\label{sloperemark}

   In \cite[Section 5]{FrGr}, Frenkel and Gross suggest the condition
   of Proposition~\ref{FGslope} as a definition of the slope of a flat
   $G$-bundle.  The corollary shows that this approach is independent
   of the choices and hence provides an alternate description of the
   slope. (This can also be shown directly using results from Section
   $9$ of \cite{BaVa1}.)

   We remark that there are advantages to the approach to the slope
   taken in this paper.  For example, there is no need to pass to a
   ramified cover in order to put $[\n]$ into the appropriate form;
   indeed, there is an explicit algorithm appearing in the proof of
   Theorem~\ref{MP} that allows one to find a trivialization of $\sG$
   with respect to which $(\sG, \n)$ contains a fundamental stratum.
   Moreover, one is able to predict which rational numbers occur as
   the slope of a flat $G$-bundle, since these must be optimal numbers
   for $\hG$.  Finally, a fundamental stratum provides additional
   structural information about a formal flat $G$-bundle beyond the
   slope.  This is explored further in~\cite{BrSa5}.

   There is also a recent preprint of Chen and Kamgarpour which shows
   how the slope may be characterized using \emph{opers}~\cite{ChKa}.
   More specifically, they define the slope of an oper and show that
   it agrees with the slope of the underlying flat $G$-bundle, using
   the formulation of \cite{FrGr}.  Since any formal flat $G$-bundle
   has an oper structure~\cite{FrZh}, its slope can be given as the
   slope of any associated oper.  However, unlike fundamental strata,
   there is no known algorithm for effectively computing the oper
   structure on a flat
   $G$-bundle.

\end{rmk}

\section{Moy-Prasad filtrations and strata}\label{MPfilt}

We continue with the notation of Section~\ref{Prelim}, so $G$ is a
connected reductive group over $k$ with Lie algebra $\fg$.  All
results about Bruhat-Tits buildings, Moy-Prasad filtrations, and
strata will be stated only for $\hG$, but analogous statements hold
throughout for $\Aut(\sG)$.  To simplify notation, we will write
$\B=\B(\hG)$ and $\bB=\bB(\hG)$.

\subsection{Filtrations on loop group representations}\label{subfilt}

In this section, we will discuss some further properties of Moy-Prasad
filtrations.

Recall that $\Ao\cong X_*(T)\otimes\R$ is the standard apartment of
$\B$ determined by the split maximal torus $\hT$.
As long as $\R\subset k$, points in $\Ao$ may also be viewed as
elements of $\ft$.  The Lie algebra $\ft'$ of any split
torus $T'$ over $k$ is canonically isomorphic to $X_*(T')\otimes_\Z
k$; the map is induced by sending a cocharacter $\lambda$ to
$d\lambda(1)$.  If $k'$ is a subfield of $k$, we let $\ft'_{k'}$ be
the image of $X_*(T')\otimes_\Z k'$ under this map.  Assuming that
$\R\subset k$, we have $\Ao\cong\ft_\R$, and we write $\tx\in\ft_\R$
for the image of $x\in\Ao$.

\begin{rmk} A point $x\in\B$ is called rational if
  $\Crit_x(V)\subset\Q$ for all $V\in\Rep(G)$.  Every element of
  $\B^{\mathrm{rat}}$ is conjugate to a point in $\Ao^{\mathrm{rat}}$,
  and $\Ao^{\mathrm{rat}}$ consists precisely of those elements of
  $\Ao$ coming from $X_*(T)\otimes_\Z \Q$.  In particular, the map
  $x\mapsto\tx$ defined above when $\R\subset k$ always makes sense
  when restricted to $\Ao^{\mathrm{rat}}$; it gives an isomorphism
  $\Ao^{\mathrm{rat}}\cong\ft_\Q$.  We adopt the convention that when
  the notation $\tx$ for $x\in\Ao$ is used, it will automatically mean
  that $x\in\Ao^{\mathrm{rat}}$ unless $\R\subset k$.
\end{rmk}

There is a very useful alternate description of Moy-Prasad filtrations for $x\in\Ao$ in terms of operators
related to the Euler vector field $\tau= z \frac{\pd}{\pd z}$. 

\begin{prop}\label{ev}   Let $V$ be 
  a finite-dimensional representation of $G$, and fix $x \in \Ao$ and
  $r \in \R$.
\begin{enumerate}
\item \label{ev1} The space $\hV_x(r)$ is the eigenspace 
 corresponding to
the eigenvalue $r$ in $\hV$ for the differential operator $\tau + \tx$.
\item\label{ev2} An element $v \in \hV$ lies in $\hV_{x, r}$ if and
  only if $(\tau + \tx) (v) - r v \in \hV_{x, r+}$.
\item\label{ev3} The set $\hV_x(r)$ constitutes a full set of coset representatives for 
the coset space $\hV_{x, r} / \hV_{x, r+}$.
\item If $X\in\hfg_x(s)$, then $\ad(X)(\hV_x(r))\subset \hV_x(r+s)$.
\end{enumerate}
\end{prop}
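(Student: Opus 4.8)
The plan is to deduce everything from part~\ref{ev1}, which is a direct computation with weight spaces; the remaining statements are then formal consequences.

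\textbf{Part \ref{ev1}.} First I would unwind \eqref{graded}: $\hV_x(r)=\bigoplus_{\chi(x)+m=r}V_\chi z^m$. The Euler operator $\tau$ acts on $\hV=V\otimes_k F$ through the factor $F$ only, so it multiplies $v\otimes z^m$ by $m$; meanwhile $\tx\in\ft_\R\subset\ft\subset\hfg$ acts on $\hV$ through the differential of the $G$-action on $V$, multiplying $V_\chi z^m$ by $d\chi(\tx)$. The identification $\Ao\cong\ft_\R$ (induced by $\lambda\mapsto d\lambda(1)$) is set up precisely so that $d\chi(\tx)=\chi(x)$ for every $\chi\in X^*(T)$. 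Granting this, $\tau+\tx$ multiplies $V_\chi z^m$ by $\chi(x)+m$; writing $v=\sum_s v_s$ with $v_s\in\hV_x(s)$, we get $(\tau+\tx)v=\sum_s s\,v_s$, so $(\tau+\tx)v=rv$ forces $v_s=0$ for $s\ne r$, and the $r$-eigenspace of $\tau+\tx$ is exactly $\hV_x(r)$.

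\textbf{Parts \ref{ev2} and \ref{ev3}.} For $v\in\hV$, write $v=\sum_s v_s$ with $v_s\in\hV_x(s)$; since $F=k((z))$ only finitely many negative powers of $z$ occur, so the set of $s$ with $v_s\ne 0$ is bounded below and this (possibly infinite) sum is a well-defined element of $\prod_s\hV_x(s)$. By \ref{ev1}, $(\tau+\tx)v-rv=\sum_s(s-r)v_s$. Using $\hV_{x,r}=\prod_{s\ge r}\hV_x(s)$ and $\hV_{x,r+}=\prod_{s>r}\hV_x(s)$, membership $v\in\hV_{x,r}$ means $v_s=0$ for all $s<r$, which is equivalent to $(s-r)v_s=0$ for all $s\le r$, that is, to $(\tau+\tx)v-rv\in\hV_{x,r+}$; this is \ref{ev2}. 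For \ref{ev3}, the same product descriptions give the $k$-linear direct sum decomposition $\hV_{x,r}=\hV_x(r)\oplus\hV_{x,r+}$, whence the composite $\hV_x(r)\hookrightarrow\hV_{x,r}\twoheadrightarrow\hV_{x,r}/\hV_{x,r+}$ is an isomorphism and $\hV_x(r)$ is a full set of coset representatives.

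\textbf{The last statement.} Here $\ad(X)$ denotes the action of $X\in\hfg$ on $\hV$. I would first record the Leibniz identity $(\tau+\tx)(X\cdot v)=\bigl((\tau+\ad(\tx))X\bigr)\cdot v+X\cdot\bigl((\tau+\tx)v\bigr)$ for $X\in\hfg$, $v\in\hV$, which follows on pure tensors from $\tau$ being a derivation of $F$ together with the module axiom $(\ad(\tx)X)\cdot v=\tx\cdot(X\cdot v)-X\cdot(\tx\cdot v)$. If $X\in\hfg_x(s)$, then \ref{ev1} for the adjoint representation gives $(\tau+\ad(\tx))X=sX$; if $v\in\hV_x(r)$, then $(\tau+\tx)v=rv$. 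Substituting, $(\tau+\tx)(X\cdot v)=(r+s)(X\cdot v)$, so $X\cdot v\in\hV_x(r+s)$ by \ref{ev1}. (Equivalently, one argues directly with weights: a $z^m$-homogeneous piece of $X$ in the $\beta$-weight space of $\fg$, with $\beta(x)+m=s$, carries a $z^{m'}$-homogeneous piece of $v$ in $V_\chi$, with $\chi(x)+m'=r$, into $V_{\chi+\beta}z^{m+m'}$, and $(\chi+\beta)(x)+(m+m')=r+s$.)

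\textbf{Main obstacle.} There is little genuine difficulty; the proposition is essentially an unwinding of definitions. The two points that need care are (i) the normalization of $\Ao\cong\ft_\R$, which is what makes the eigenvalue of $\tau+\tx$ on $V_\chi z^m$ equal to the grading index $\chi(x)+m$ and not a sign-variant of it — this is the crux of \ref{ev1}; and (ii) remembering that $\hV$, $\hV_{x,r}$, $\hV_{x,r+}$ are completed objects, so that the expansion $v=\sum_s v_s$ runs over an infinite but bounded-below index set and the quotient in \ref{ev3} is read accordingly. Once \ref{ev1} is in place, the rest is formal.
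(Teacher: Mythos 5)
Your proof is correct and fills in, with the right details, exactly what the paper dismisses as ``immediate from the definitions of $\hV_x(r)$ and $\hV_{x,r}$.'' The key normalization $d\chi(\tx)=\chi(x)$ you isolate in part~(1) is indeed the crux, and the Leibniz identity $(\tau+\tx)(X\cdot v)=((\tau+\ad(\tx))X)\cdot v+X\cdot((\tau+\tx)v)$ is the right way to reduce part~(4) to part~(1).
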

\begin{proof}
Each of these statements follows immediately from the definitions of $\hV_x (r)$
and $\hV_{x, r}$.  
\end{proof}

The following lemma allows one to express the action of $\hN=N_F$ on
$\Ao$ in term of the differential calculus above.

\begin{lem}\label{bnorm}
Suppose that $n \in \hN$ is a coset representative of $w \in \hW$.
 For all $x \in \Ao$, $\Ad(n) (\tx) - \tau (n) n^{-1} \in \widetilde{w x}
+ \hft_{0+}.$
 \end{lem}
\begin{proof}
  Write $n = t n'$, with $t \in \hT$ and $n' \in N$.  Since $\hT/
  \hT_0 \cong X_*(T)$ (as abstract groups), we may write $t = z^y t'$
  for some $y \in X_*(T)$ and $t'\in\hT_0$.  An application of the
  Leibniz rule shows that
\begin{equation}\label{templabel}
\tau(n) n^{-1} = \Ad(t) \left[\tau (n')(n')^{-1}\right] + \tau (t) t^{-1} =
\tau(z^y) z^{-y} + \tau(t') (t')^{-1} \in \tilde{y} + \hft_{0+}.
\end{equation}
Here, $\tau(n') (n')^{-1} = 0$, since $n' \in G$.

Define $a \in \Ao$ by $\ta = \Ad(n) (\tx) - \ty.$ Suppose that $v \in
\hV_{x,r}$ for some representation $V$, and let $u = n v \in \hV_{w
  x,r}$.  We will show that
\begin{equation}\label{lemexp}
\tau (u) + \ta u \in r u + \hV_{w x, r+}.
\end{equation}
Assuming this, Proposition~\ref{ev}\eqref{ev2} implies that $(\ta-\widetilde{w
  x}) (u) \in \hV_{w x, r+}$.  Furthermore, if this expression holds
for arbitrary $V$, $r$, and $u$, then $\ta =\widetilde{w x}$.
(Otherwise, let $V$ be a faithful representation.  Then, there exists
$r \in \R$ and an eigenvector in $\hV_{wx, r}/\hV_{wx, r+}$ with
nonzero eigenvalue for the action of $\ta-\widetilde{w x}$, a
contradiction.)

It remains to prove \eqref{lemexp}. Calculating, we obtain
\begin{align*}
  \tau (n v) + \ta u & = \tau (n) n^{-1} u + n (\tau (v)) + \ta u
  &&\text{by the Leibniz rule}
  \\
  & \in  n(\tau(v)) +(\Ad(n)\tx) (u) + \hft_{0+} u  &&\text{by \eqref{templabel}}\\
  & = n ((\tau +\tx) (v)) + \hft_{0+} u &&\text{by definition of $a$}\\
  & \subset r n (v) + n \hV_{x, r+} = r u + \hV_{w x, r+}&&\text{by Proposition~\ref{ev}\eqref{ev2}}.
\end{align*}

\end{proof}

Next, we discuss further the quotient group $\hG_x/\hG_{x+}$.  This
group is isomorphic to a reductive maximal rank subgroup of $G$.  We
will construct an explicit isomorphism.  We will first define this
isomorphism for $x\in\Ao$ and then will apply equivariance to define
the map in the general case.

Fix $x\in\Ao$.  Let $\fh_x\subset \fg$ be the subalgebra
$\fh_x=\ft\oplus\bigoplus_{\{\a\mid d\a(\tx)\in\Z\}}\fu_\a$.
It is the Lie algebra of the reductive subgroup $H_x\subset G$
generated by $T$ and the corresponding root subgroups $U_\a$.  The
group $H_x$ is the connected centralizer of $\exp (2 \pi i \tx)\in
G$.\footnote{If $k$ contains $\R$ (and hence $\C$), then $\exp (2 \pi
  i \tx)$ makes sense; otherwise, by our convention, $\tx\in \ft_\Q$.}
Note that $H_x$ only depends on the image of $x$ in $\bB$.  

Next, we define a $T$-equivariant homomorphism
$\theta'_x:H_x\to\hG_x$.  Choose isomorphisms $u_\a:\Ga\to U_\a$
giving a realization of the root system of $H_x$ with respect to $T$
in the sense of Springer~\cite[Section 8.1]{Spr}.  On the generating
subgroups, $\theta'_x$ is defined via $T\hookrightarrow T(\fo)$ and
$\theta'_x(u_\a(c))=u_\a(c z^{-\a(\tx)})$ for $c\in k$.  It is easy to
check that $\theta'_x$ satisfies the relations among the generators
(see for example~\cite[Section 9.4]{Spr}) and hence is a homomorphism.
(If $\tx\in\ft_\Q$, then $\theta'_x$ is just conjugation by
$z^{-\tx}$; here, if $\l_1,\dots\l_n$ is a basis for $X_*(T)$ and
$\tx=\sum d\l_i(\tx_i)$, then $z^{-\tx}=\prod \l_i(z^{-\tx_i})\in
T(\bF)$.)

Let $\theta_x:H_x\to\hG_x/\hG_{x+}$ be the induced map.  Since
\begin{equation*}
\hfg_x/\hfg_{x+}\cong\hfg_x(0)=\ft \oplus \bigoplus_{\a:d\a(\tx)\in\Z}\fu_\a
z^{-\a(\tx)},
\end{equation*}
it is clear that $d\theta_x$ is an isomorphism.  The map $\theta_x$ is
thus an isogeny which restricts to an isomorphism on the maximal torus
$T$, hence is an isomorphism.  The group $H_x$ acts on $\hV_x(r)$ for
any $r$.  It is now easy to see that $\theta_x$ intertwines the
representations $\hV_x(r)$ and $\hV_{x,r}/\hV_{x,r+}$.

If $y=gx$ for some $g\in\hG$, then $\theta_x$ composed with
conjugation by $g$ gives an isomorphism
$\theta_{x,g}:H_x\overset{\sim}{\to}\hG_y/\hG_{y+}$.  This map depends
on the choice of $g$.  If $y=g'x$ also, then $g'=gh$ for some
$h\in\Stab(x)$, and $\theta_{x,g'}$ and $\theta_{x,g}$ differ by an
automorphism of $H$.  However, if $\Stab(x)=\hG_x$, then
$\theta_{x,g'}$ is just conjugation by $\theta_x^{-1}(h\hG_{x+})$
composed with $\theta_{x,g}$.  Thus, in this case, the isomorphism is
well-defined up to an inner automorphism of $H_x$.  In particular,
this is true when $x$ lies in a minimal facet (i.e., $\hG_x$ is a
maximal parahoric) or $G$ is semisimple and simply
connected~\cite[p.50]{Ti}.  We sum up this discussion in the following
proposition.

\begin{prop}\label{r0stratum} Fix $x\in\Ao$. 
\begin{enumerate}\item\label{r0stratum1} The map $\theta_x:H_x\to\hG_x/\hG_{x+}$
    is an isomorphism that intertwines the representations
    $\hV_{x}(r)$ and $\hV_{x,r}/\hV_{x,r+}$ for all $r$.
  \item If $\Stab(x)=\hG_x$, the maps $\theta_{x,g}$ induce a
    compatible family of bijections between the sets of conjugacy
    classes of $\hG_y/\hG_{y+}$ for $y\in \hG\cdot x$ and the set of
    conjugacy classes of $H_x$.
  \end{enumerate}
\end{prop}

Moy-Prasad filtrations are well-behaved with respect to the usual
algebraic operations.  The filtration on the trivial representation
$\hat{k}=F$ is just the usual degree filtration; it is independent of
$x\in\B$.  If $W$ is a subrepresentation of $V$, then
$\hatW_{x,r}=\hV_{x,r}\cap \hatW$.  If $V$ and $W$ are two
representations, then $\widehat{(V\oplus W)}_{x,r}=\hV_{x,r}\oplus
\hat{W}_{x,r}$.  Next, under the identification
$\widehat{\Hom_k(V,W)}\cong\Hom_F(\hV,\hat{W})$, we have
\begin{equation}\label{hommp} \widehat{\Hom_k(V,W)}_{x,r}=\{f\mid
  f(\hV_{x,s})\subset \hat{W}_{x,s+r} \text{ for all } s\in\R\}.
\end{equation}
If we let $V^\vee=\Hom(V,k)$ and denote the $F$-linear dual of an
$F$-vector space $U$ by $U^*$, then this formula also gives the
filtrations on $\widehat{(V^\vee)}=\hV^*$ and on $\hV\otimes_F
\hatW\cong \widehat{V\otimes W}\cong\widehat{\Hom_k(V^\vee,W)}$.

We have already seen how to define Moy-Prasad filtrations on the space of
one-forms $\Om^1(\hV)$; we set $\Om^1(\hV)_{x,r}$ equal to the image
of $\hV_{x,r}$ under the isomorphism $\hV\to\Om^1(\hV)$ given by
$u\mapsto u\ddz$.  While this map depends on the choice of
uniformizer, it is easy to see that the filtrations do not.

We have also defined Moy-Prasad filtrations on the smooth $k$-dual of
$\hfg$.  To do this, we identified $\hfg$ and $\hfg^\vee$ using the
fixed $G$-invariant form $\left<, \right>$ and defined filtrations on
the dual space by transport of structure.  For completeness, we will
show how to define filtrations on smooth dual spaces $\hV^\vee$, where
$V\in\Rep(G)$ is not necessarily endowed with an appropriate invariant
form.  The new approach will of course give the same filtrations on
$\hfg^\vee$.

If $V$ is a representation of $G$, we define an isomorphism
$\hV^*=\widehat{(V^\vee)}\overset{\kappa}{\to} (\hV)^\vee$ via
$\kappa(\a)(v)=\Res \a(v)\ddz$, and set $(\hV^\vee)_{x,
  r}=\kappa({\hV^*}_{x,r})$. Again, although the map depends on the
uniformizer, the filtrations do not.  (To see this, simply observe
that $\kappa$ is the composition of the map $\hV^*\to
\Om^1(\widehat{V^\vee})$ with the canonical isomorphism
$\Om^1(\widehat{V^\vee})\to(\hV)^\vee$ sending $\omega$ to
$v\mapsto\Res \omega(v)$.)

If one further supposes that $V$ is endowed with a nondegenerate
$G$-invariant symmetric bilinear form $\left(,\right)$, then the
resulting isomorphism $V\to V^\vee$ induces an isomorphism $\hV\to
(\hV)^\vee$ given by $v\mapsto \Res\left(v,\cdot\right)\ddz$.  We
denote the corresponding $k$-bilinear pairing $\hV\times\hV\to k$ by
$\left(,\right)_\ddz$.

\begin{rmk}\label{coadjoint} In the case of the adjoint representation, the
 isomorphism $\Om^1(\hfg) \cong\hfg^*$ intertwines the $G$-action
 defined in Section   with the coadjoint action.
\end{rmk}

We now give the basic facts about the relationship between duality and
the Moy-Prasad filtration.  For the adjoint representation, these
results appear in \cite[Sections 3.5 and 3.7]{MP1}.

If $W$ is a $k$-subspace of $\hV$, we let $W^\perp=\{\phi\in
\hV^\vee\mid \phi(W)=0\}$.

\begin{prop}\label{dual} Let $V$ be a finite-dimensional representation of $G$.
  Fix $x\in\B$ and $r\in\R$.
\begin{enumerate}\item\label{dual1} 
The Moy-Prasad filtrations on
  $\hV^\vee$ may be expressed in terms of the annihilators of the
  filtrations on $\hV$ as $\hV^\vee_{x,-r}= \hV^\perp_{x,r+}$ and
  $\hV^\vee_{x,-r+}= \hV^\perp_{x,r}$.
\item\label{dual2} There is a natural $\hG_x$-invariant perfect pairing
  \begin{equation*}
 \hV^\vee_{x, -r}/\hV^\vee_{x, -r+} \times \hV_{x, r} / \hV_{x, r+} \to k,
 \end{equation*} 
 which induces the isomorphism $(\hV_{x, r}/ \hV_{x, r+})^\vee \cong
 \hV^\vee_{x, -r}/\hV^\vee_{x, -r+}$.
\item \label{dual3} There are $\hG_x$-isomorphisms $(\hV_{x,r})^\vee\cong
  \hV^\vee/\hV^\vee_{x,-r+}$ and $(\hV_{x,r+})^\vee\cong
  \hV^\vee/\hV^\vee_{x,-r}$.
\item \label{dual4} Suppose that $V$ is endowed with a nondegenerate
  $G$-invariant symmetric bilinear form $\left(,\right)$.  Then,
  $\left(,\right)_\ddz$ induces $\hG_x$-isomorphisms
  $\hV^\vee_{x,-r}\cong\hV_{x,-r}\cong\Om^1(\hV)_{x,-r}$ and
  $\hV^\vee_{x,-r+}\cong\hV_{x,-r+}\cong\Om^1(\hV)_{x,-r+}$; in particular, $(\hV_{x, r}/
  \hV_{x, r+})^\vee \cong \hV_{x, -r}/\hV_{x, -r+}$.
\end{enumerate}
\end{prop}

\begin{proof} If $x\in\Ao$, these statements are easily checked using
  the gradings.  For example, to prove \eqref{dual1}, one need only
  observe that the natural pairing
  $\hV^\vee_{x}(s)\times \hV_x(r)\to k$ is perfect if $s=-r$ and $0$
  otherwise.  (Here, the graded components of $\hV^\vee_{x}$ are
  defined using transport of structure from $\hV^*$ via $\kappa$.)
  Parts \eqref{dual2} and \eqref{dual3} then follow from
  \eqref{dual1}, and part \eqref{dual4} is a consequence of
  \eqref{dual2} and \eqref{dual3}.

  The general case is obtained by conjugating $x$ into $\Ao$.
\end{proof}

In the situation of part~\eqref{dual4}, we obtain
$\Crit_x(V^\vee)=-\Crit_x(V)$.  If in addition $\fz$ acts trivially on
$V$, $\Crit_x(V^\vee)=\Crit_x(V)$; in particular, $\Crit_x(\fg^\vee)=\Crit_x$.

\subsection{Fundamental Strata}\label{fundsect}

In this section, we provide more information about strata.  Recall
that a $\hG$-stratum of depth $r$ is a triple $(x, r, \b)$ such that
  $\b \in (\hfg_{x, r}/\hfg_{x, r+})^\vee$.  We denote the set of
  $G$-strata (resp, $G$-strata of depth $r$) by $\sS^G$ (resp. $\sS^G_r$).

  By Proposition~\ref{dual}, we may identify
  $(\hfg_{x, r}/\hfg_{x, r+})^\vee$ with
  $\hfg^\vee_{x, -r} / \hfg^\vee_{x, -r+}$.  We will call
  $\tb \in \hfg^\vee_{x, -r}$ a \emph{representative} for $\b$ if $\b$
  corresponds to
  $\tb+\hfg^\vee_{x, -r+}\in\hfg^\vee_{x, -r} / \hfg^\vee_{x, -r+}$.
  If $x\in\Ao$, we let $\tbo$ denote the unique homogeneous
  representative in $\hfg^\vee_x(-r)$.

The $\hG$-equivariance of Moy-Prasad filtrations induces a natural
action of $\hG$ on $\sS^G$ and on each $\sS^G_r$.  Indeed, the
coadjoint action induces a map $\bAd^*(g):\hfg^\vee_{x, -r} /
\hfg^\vee_{x,-r+}\to \hfg^\vee_{g x, -r} / \hfg^\vee_{g x,-r+}$.  If
we let $g\b\in(\hfg_{gx,r}/\hfg_{gx,r+})^\vee$ be the functional
induced by $\bAd^*(g)(\tb)$, then the action on strata is defined by
$g\cdot(x,r,\b)=(gx,r,g\b)$.

Definition~\ref{fund} states that a stratum $(x,r,\b)$ is fundamental
if the functional $\b$ is a semistable point of the
$\hG_x/\hG_{x+}$-representation $(\hfg_{x, r}/\hfg_{x, r+})^\vee$.
This condition may be expressed more explicitly as follows.

\begin{prop}\label{nilpotent} The stratum $(x, r, \b)$ is
  nonfundamental if and only if the coset $\tb + \hfg^\vee_{x, -r+}$
  contains a nilpotent element.  Moreover, if $x\in\Ao$, $(x, r, \b)$
  is nonfundamental if and only if the graded representative $\tbo$ is
  nilpotent.
 \end{prop}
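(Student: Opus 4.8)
The plan is to unwind the definition of fundamental in terms of the semistability of $\b$ as a point of the $\hG_x/\hG_{x+}$-representation $(\hfg_{x,r}/\hfg_{x,r+})^\vee$, and then translate this into a statement about nilpotent representatives in $\hfg^\vee_{x,-r}$. First I would dispose of the case $r\notin\Crit_x$, where the quotient is zero and $(x,r,\b)$ is trivially nonfundamental; simultaneously the zero coset $\hfg^\vee_{x,-r+}=\hfg^\vee_{x,-r}$ certainly contains a nilpotent element (namely $0$), so both sides of the equivalence hold. So I may assume $r\in\Crit_x$. Also, by the $\hG$-equivariance of the Moy-Prasad filtration and the fact that nilpotency in $\hfg^\vee$ is a $\hG$-stable notion, it suffices to treat $x\in\Ao$: conjugating $x$ into $\Ao$ by some $g\in\hG$ sends $\b$ to $g\b$, sends representatives to representatives, and preserves both semistability (via $\theta_{x,g}$ and the compatibility in Proposition~\ref{r0stratum}) and nilpotency. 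This reduces the proposition to its second assertion, that for $x\in\Ao$ the stratum is nonfundamental iff the graded representative $\tbo\in\hfg^\vee_x(-r)$ is nilpotent.

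For $x\in\Ao$, the key tool is the isomorphism $\theta_x:H_x\to\hG_x/\hG_{x+}$ of Proposition~\ref{r0stratum}, which intertwines the $H_x$-representation $\hfg_x^\vee(-r)$ (a subspace of $\hfg^\vee$ stable under $H_x\subset G$) with the $\hG_x/\hG_{x+}$-representation $\hfg^\vee_{x,-r}/\hfg^\vee_{x,-r+}$; under this identification $\tbo$ is precisely the element corresponding to $\b$. So the semistability of $\b$ as an $\hG_x/\hG_{x+}$-point is equivalent to the semistability of $\tbo$ as an $H_x$-point of $\hfg_x^\vee(-r)$. Since we are in characteristic zero, by the Hilbert-Mumford criterion (recalled just before Definition~\ref{fund}), $\tbo$ is unstable for $H_x$ iff there is a one-parameter subgroup $\g:\Gm\to H_x$ with $\lim_{t\to 0}\g(t)\cdot\tbo=0$. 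The crux is then to show this is equivalent to $\tbo$ being nilpotent as an element of $\hfg^\vee$, i.e.\ unstable for the coadjoint action of the full loop group $\hG$ (equivalently, by transport of structure, $\iota_\tau$ applied to the corresponding $1$-form is a nilpotent element of $\hfg$).

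One direction is immediate: any $\Gm\to H_x\subset G\subset\hG$ destabilizing $\tbo$ inside $H_x$ also destabilizes it inside $\hG$, so $H_x$-unstable implies $\hfg^\vee$-nilpotent. The substantive direction is the converse, and I expect this to be the main obstacle. The point is that instability is tested on a \emph{single} fibre: if $\tbo$ is nilpotent in $\hfg^\vee$ then there is some one-parameter subgroup of $\hG$ driving it to $0$, and one must replace it by one landing in $H_x$. Here I would use the homogeneity of $\tbo$ (it lies in the single graded piece $\hfg_x^\vee(-r)$, an eigenspace of $\tau+\tx$ by Proposition~\ref{ev}) together with a Kempf-type argument: an optimal destabilizing one-parameter subgroup for a $\Gm$-fixed grading can be chosen compatible with that grading, and for the coadjoint representation of $\hG$ such a one-parameter subgroup may be conjugated into $\hT$; combined with the grading condition defining $H_x$ (root subgroups $U_\a$ with $\a(\tx)\in\Z$), the resulting destabilizing cocharacter — suitably adjusted by the $z$-grading cocharacter — normalizes $\Ao$ and in fact lies in $H_x$. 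Concretely, one can also argue directly: nilpotency of the $1$-form means there is a parahoric in which its $\iota_\tau$-image is strictly upper triangular; transporting this back to the facet of $x$ exhibits a parabolic of $H_x$ whose unipotent radical annihilates $\tbo$ in the limit, giving the desired one-parameter subgroup of $H_x$. Once this equivalence of the two instability notions is established, combining it with the $\theta_x$-intertwining yields exactly the stated criterion, and the general ($x\notin\Ao$) and edge ($r\notin\Crit_x$) cases follow as above.
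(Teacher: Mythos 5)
Your plan has a genuine gap in the reduction step, and your route to the ``hard direction'' is more complicated than it needs to be. Abbreviate the three conditions as (A) \emph{$(x,r,\b)$ is nonfundamental}, (B) \emph{$\tb+\hfg^\vee_{x,-r+}$ contains a nilpotent element}, and (C) \emph{$\tbo$ is nilpotent} (the last for $x\in\Ao$). You claim that conjugating into $\Ao$ reduces the whole proposition to proving (A)~$\Leftrightarrow$~(C). Conjugation does reduce to $x\in\Ao$, but for $x\in\Ao$ the implication (B)~$\Rightarrow$~(A) is \emph{not} a formal consequence of (A)~$\Leftrightarrow$~(C): an arbitrary nilpotent $\eta\in\tb+\hfg^\vee_{x,-r+}$ has $\tbo$ as its leading graded component, and deducing from this that $\tbo$ is itself nilpotent (equivalently that $\b$ is unstable) is a nontrivial fact about the Moy--Prasad filtration. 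It is exactly \cite[Proposition~4.3]{MP1}, which the paper invokes at precisely this point and which you never use.

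Once one has \cite[Proposition~4.3]{MP1}, the Kempf-type machinery you propose becomes unnecessary: the paper closes the cycle (A)~$\Rightarrow$~(C)~$\Rightarrow$~(B)~$\Rightarrow$~(A), where the first arrow is the one-parameter-subgroup/extension-of-scalars argument you also give (your ``immediate'' direction), the second is trivial, and the third is Moy--Prasad. This yields all three equivalences without ever proving (C)~$\Rightarrow$~(A) directly, which is precisely the direction you correctly flag as the main obstacle and only sketch. The Kempf/Hesselink argument you outline --- conjugating an optimal destabilizing cocharacter into $\hT$, adjusting by the grading cocharacter, and landing in $H_x$ --- would take real work to make precise, the ``concrete'' parabolic alternative even more so, and even a complete proof of (C)~$\Rightarrow$~(A) would leave the gap in your reduction intact. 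The fix is to cite (or reprove) \cite[Proposition~4.3]{MP1} and then follow the cyclic argument; the instability transfer you were aiming at drops out as a corollary rather than serving as a lemma.
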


\begin{proof}
  Since the stratum $(x, r, \b)$ is fundamental if and only if $g(x,
  r, \b)$ is fundamental, it suffices to assume that $x\in\Ao$.
  Suppose that $(x, r, \b)$ is nonfundamental.  By
  Proposition~\ref{r0stratum}\eqref{r0stratum1}, the homogeneous
  representative $\tbo$ is an unstable point of the
  $H_x$-representation $\hfg^\vee_x(-r)$.  Let $\g:k^*\to H_x$ be a
  one-parameter subgroup for which $\lim_{t \to 0} \g(t)\cdot\tbo =
  0$.  Let $\tg:F^*\to\hG$ be the one-parameter subgroup obtained from
  $\g$ by extension of scalars.  Since $\tbo$ lies in the sum of
  positive weight spaces for $\g$, it also is in the sum of positive
  weight spaces for $\tg$.  Thus, $\lim_{t \to 0} \tg(t)\cdot\tbo =
  0$, and $\tbo$ is nilpotent.

  Next, if $\tb + \hfg^\vee_{x, -r+}$ contains a nilpotent element,
  then $\b$ is unstable by~\cite[Proposition 4.3]{MP1}.  Finally, if
  $\tbo$ is nilpotent, then $\tb + \hfg^\vee_{x, -r+}$ of course
  contains a nilpotent element.

\end{proof}
\begin{rmk} \label{fundrmk} The proof of the proposition gives a
  practical way of determining whether a stratum is fundamental.  One
  conjugates into the standard apartment (or any apartment
  corresponding to a maximal torus of $G$) and checks to see if the
  graded component of the functional is nonnilpotent.
\end{rmk}

We conclude this section by defining what it means for two strata to
be \emph{associates} of each other.  We have seen in
Theorem~\ref{associateness} that two fundamental strata contained in a
flat $G$-bundle are associates of each other. Before giving the
definition, we need a proposition.

\begin{prop}\label{deltaprop}
  Suppose that $x, y \in \B$.  There exists a element $\de_{x, y} \in
  \hfg^\vee_{x,0} \cap \hfg^\vee_{y,0}$ with the following property:
  for all $ g \in \hG$ such that $g x, g y \in \Ao$,
  \begin{equation}\label{delta}
\Ad^*(g)(\de_{x, y}) \subset (\widetilde{gy} - \widetilde{gx}) \ddz + 
\hfg^\vee_{gx, 0+} + \hfg^\vee_{gy, 0+}.
\end{equation}
\end{prop}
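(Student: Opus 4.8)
The plan is to construct $\de_{x,y}$ directly on the standard apartment and then use $\hG$-equivariance to transport it, checking that the transported object does not depend (modulo the error terms) on the choice of transporting element. First I would reduce to the case $x, y \in \Ao$. Given arbitrary $x, y \in \B$, pick $g_0 \in \hG$ with $g_0 x, g_0 y \in \Ao$ (such $g_0$ exists since the apartments cover $\B$ and any two points lie in a common apartment); set $x_0 = g_0 x$, $y_0 = g_0 y$. On $\Ao$ we have the canonical identification $\Ao \cong \ft_\R$, so $\widetilde{y_0} - \widetilde{x_0} \in \ft_\R \subset \hft$, and I would take as the candidate representative the element
\begin{equation*}
\de^0 = (\widetilde{y_0} - \widetilde{x_0})\ddz \in \hft^\vee \subset \hfg^\vee,
\end{equation*}
using the invariant form to view $\ft$-valued $1$-forms as functionals. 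The first thing to verify is $\de^0 \in \hfg^\vee_{x_0,0} \cap \hfg^\vee_{y_0,0}$: by Proposition~\ref{dual}\eqref{dual4} this amounts to $\widetilde{y_0} - \widetilde{x_0} \in \hfg_{x_0,0} \cap \hfg_{y_0,0}$, which follows because a constant element of $\ft$ lies in every parahoric subalgebra $\hfg_{w,0}$ for $w \in \Ao$ (the torus part $\hft_0 = \ft(\fo)$ is insensitive to the point, and $\ft_\R \subset \hft_0$). Then I would define $\de_{x,y} = \Ad^*(g_0^{-1})(\de^0)$, noting this lands in $\hfg^\vee_{x,0} \cap \hfg^\vee_{y,0}$ by $\hG$-equivariance of Moy--Prasad filtrations.

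Next I would establish property~\eqref{delta}. Let $g \in \hG$ be any element with $gx, gy \in \Ao$. Write $h = g g_0^{-1}$, so $h$ sends $x_0 = g_0 x$ to $gx \in \Ao$ and $y_0 = g_0 y$ to $gy \in \Ao$; then $\Ad^*(g)(\de_{x,y}) = \Ad^*(h)(\de^0)$. The claim becomes: for $h \in \hG$ carrying the two points $x_0, y_0$ of $\Ao$ into $\Ao$,
\begin{equation*}
\Ad^*(h)\bigl((\widetilde{y_0}-\widetilde{x_0})\ddz\bigr) \in (\widetilde{hy_0} - \widetilde{hx_0})\ddz + \hfg^\vee_{hx_0,0+} + \hfg^\vee_{hy_0,0+}.
\end{equation*}
Here is where Lemma~\ref{bnorm} enters. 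An element $h$ carrying a point of $\Ao$ into $\Ao$ is, up to the parahoric at that point, an element of $\hN$: more precisely one can write $h = n_1 p_1$ with $n_1 \in \hN$ representing a $w_1 \in \hW$ and $p_1 \in \hG_{x_0}$, and also $h = n_2 p_2$ with $n_2 \in \hN$ and $p_2 \in \hG_{y_0}$. Lemma~\ref{bnorm} tells us $\Ad(n_i)(\widetilde{x_0})$ differs from $\widetilde{w_i x_0} = \widetilde{h x_0}$ by $\tau(n_i)n_i^{-1} - (\text{element of } \hft_{0+})$; the key point is that the discrepancy between $\Ad^*(h)$ and $\Ad^*(n_i)$ applied to our element is absorbed by $p_i$, which preserves the relevant filtration step. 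I would apply this twice — once with $n_1$ to control the $\hfg^\vee_{hx_0,0+}$-error, once with $n_2$ for the $\hfg^\vee_{hy_0,0+}$-error — splitting $\widetilde{y_0} - \widetilde{x_0}$ appropriately and noting that the Maurer--Cartan terms $\tau(n_i)n_i^{-1}$, which contribute $\widetilde{\lambda_i}\ddz$-type pieces, are exactly what convert $(\widetilde{y_0}-\widetilde{x_0})\ddz$ into $(\widetilde{hy_0}-\widetilde{hx_0})\ddz$ modulo $\hft_{0+}\ddz \subset \hfg^\vee_{hx_0,0+} \cap \hfg^\vee_{hy_0,0+}$.

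The main obstacle I anticipate is the well-definedness: showing that $\de_{x,y}$ as constructed is independent of the auxiliary choice $g_0$ (equivalently, that a single element works for \emph{all} valid $g$, not just those factoring through the chosen $g_0$). This is really the content of~\eqref{delta} itself — if two choices $g_0, g_0'$ gave genuinely different $\de_{x,y}$, then applying~\eqref{delta} to $g$ ranging over both would force a contradiction — so the resolution is to prove~\eqref{delta} as an intrinsic statement about \emph{every} $g$ first, using the $\hN$-factorization and Lemma~\ref{bnorm} as above, and only afterward observe that any one valid choice of $\de^0$-then-pullback produces an element satisfying it. A secondary technical point is bookkeeping the two distinct "$0+$" error spaces $\hfg^\vee_{gx,0+}$ and $\hfg^\vee_{gy,0+}$ simultaneously: one must be careful that the correction needed near $gx$ (coming from the $\widetilde{x_0}$-part) and near $gy$ (from the $\widetilde{y_0}$-part) can be separated, which works because $(\widetilde{y_0}-\widetilde{x_0})\ddz = \widetilde{y_0}\ddz - \widetilde{x_0}\ddz$ and each summand is handled by the parahoric at the point it "belongs to." Once~\eqref{delta} is verified, the existence statement is immediate and the proof is complete.
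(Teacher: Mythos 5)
Your overall plan---construct $\de_{x,y}$ by pulling back $(\widetilde{y_0}-\widetilde{x_0})\ddz$ from $\Ao$ and then verify \eqref{delta} as an intrinsic statement---matches the paper's, and you have the right sign (incidentally, the paper's inline remark after the proposition has a sign slip; its proof uses $(\ty-\tx)\ddz$ as you do). However, the mechanism you propose for verifying \eqref{delta} has a genuine gap.

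You factor $h = n_1 p_1$ with $n_1 \in \hN$, $p_1 \in \hG_{x_0}$, apply Lemma~\ref{bnorm} to the $n_1$-piece, and then say ``the discrepancy between $\Ad^*(h)$ and $\Ad^*(n_1)$ \dots is absorbed by $p_1$, which preserves the relevant filtration step.'' This last step is false as stated. The coadjoint action $\Ad^*(p_1)$ is \emph{not} the gauge action, and $\Ad^*(p_1)(\widetilde{x_0}\ddz) - \widetilde{x_0}\ddz$ is not in $\hfg^\vee_{x_0,0+}$; rather, using Lemma~\ref{actlem}\eqref{act3} one gets
\begin{equation*}
\Ad^*(p_1)(\widetilde{x_0}\ddz) - (dp_1)p_1^{-1} - \widetilde{x_0}\ddz \in \hfg^\vee_{x_0,0+},
\end{equation*}
and the Maurer--Cartan term $(dp_1)p_1^{-1}$ lands in $\hfg^\vee_{x_0,0}$ but generally not in $\hfg^\vee_{x_0,0+}$ (take $p_1=\exp(X)$ with $X\in\hfg_{x_0}(0)$ a root vector with $\a(\widetilde{x_0})\ne 0$). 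Your proposal, by using \emph{two different} factorizations $n_1p_1$ and $n_2p_2$ for the two halves of $\de^0$, never exhibits a cancellation of these $(dp_i)p_i^{-1}$ terms. The cancellation does happen, but only because $(dn_i)n_i^{-1} + \Ad(n_i)\bigl((dp_i)p_i^{-1}\bigr) = (dh)h^{-1}$ is the \emph{same} form for $i=1,2$; you neither write this identity nor appeal to it, so the argument as described leaves an uncontrolled error.

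The paper sidesteps this entirely. It reduces to $g\in\hG_x\cap\hG_y$ (so that a single parahoric element fixes both points), writes $\de_{x,y}=\dAo{y}-\dAo{x}$, and uses the defining invariance property \eqref{dA} of the canonical derivations: $g\dAo{x}g^{-1}\in\dAo{x}+\hfg^\vee_{x,0+}$ and likewise at $y$, so subtracting kills the Maurer--Cartan term automatically (it appears once in each gauge transform with the same sign). Your argument would become correct and essentially coincide with the paper's if you made this substitution: observe $\Ad^*(h)(\de^0)=h\dAo{y_0}h^{-1}-h\dAo{x_0}h^{-1}$ (the $(dh)h^{-1}$ cancels), then apply \eqref{dA} twice---no $\hN$-factorization and no invocation of Lemma~\ref{bnorm} is needed. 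Finally, your worry about well-definedness of $\de_{x,y}$ in the choice of $g_0$ is a non-issue: the paper explicitly allows $\de_{x,y}$ to depend on that choice and only asserts that any such choice satisfies \eqref{delta}, which is the statement actually being proved.
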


The proof will be deferred to Section~\ref{proofs}.  When $x,y\in\Ao$,
it will follow from the proof that we may take $\de_{x, y} =
(\tx-\ty)\ddz$.  We will usually do so without comment.

\begin{defn}\label{assdef}
Let $(x, r, \b)$ and $(y,r, \b')$ be two $G$-strata.  Choose
representatives $\tb \in \hfg^\vee_{x, -r}$ and
$\tb' \in  \hfg^\vee_{y, -r}$ for $\b$ and $\b'$, respectively.
We say that $(x, r, \b)$ and $(y, r, \b')$ are \emph{associates} of
each other
if there exists $g \in \hG$ such that
\begin{equation}\label{asseq}
\left(\Ad^*(g)(\tb) + \hfg^\vee_{g x, -r+}\right) \cap 
\left(\tb' - \de_{g x, y} + \hfg^\vee_{y, -r+}\right) \ne \emptyset.
\end{equation}
Note that when $r > 0$, the $\de_{g x, y}$ term is unnecessary.
\end{defn}

It is immediate that this definition is independent of the particular
elements $\de_{x, y}$ chosen to satisfy~\eqref{delta}.  It is also
symmetric.  We observe further that conjugate strata are associates.
Indeed, if $(y,r,\b')=(gx,r,g\b)$, then the two cosets coincide.  (One
may take $\de_{y,y}=0$.)

\section{Formal flat $G$-bundles and fundamental strata}\label{formfund}

Recall from Section~\ref{flatg} that a formal flat $G$-bundle is a
principal $G$-bundle $\sG$ over $\Dx$ endowed with a connection $\n$.
Upon fixing a trivialization $\phi:\sG\to\hG$, the corresponding
connection on the trivial flat $G$-bundle can be written as
$\n_\phi=d+\nbr_\phi$ with $\nbr_\phi\in\Om^1(\hfg)$. Equivalently,
$(\sG,\n)$ can be viewed as a compatible family of flat vector bundles
$(V_\sG,\n_V)$ indexed by $\Rep(G)$.  In terms of the trivialization
$\phi$, the flat vector bundle associated to $(V,\rho)$ is
$(\hV,d+\rho(\nbr_\phi)$.  We remark that if $V$ is the standard
representation of $\GL_n$, then the functor
$(\sG,\n)\mapsto(V_\sG,\n_V)$ is an equivalence of categories between
flat $\GL_n$-bundles and flat rank $n$ vector bundles.

As discussed before Proposition~\ref{dual},
$\Om^1(\hfg)\cong\hfg^\vee$, where the isomorphism depends only on our
fixed choice of invariant form $\left<,\right>$.  Recall that $X_{\tb}\in\Om^1(\hfg)$
denotes the one-form corresponding to $\tb\in\hfg^\vee$.

\subsection{Strata contained in flat $G$-bundles}\label{strataing}

In Section~\ref{s:gstrata}, we showed how to associate $G$-strata to
flat $G$-bundles.  In particular, a flat $G$-bundle $(\sG, \n)$
contains the $\hG$-stratum $(x, r, \b)$ with respect to the
trivialization $\phi$ if, for any representative
$\tb \in \hfg^\vee_{x, -r}$ of $\b$,
$(\n_\phi - i \ddz - X_{\tb})(\hV_{x, i}) \subset \Om^1(\hV)_{x,
  (i-r)+}$
for all $i\in\R$ and $V\in\Rep(G)$.  More succinctly, this holds if for
all $(V,\rho)$, the induced flat vector bundle $(V_{\sG},\n_V)$
contains the $\hV$-stratum $(\rho(x),r,\rho(\b))$. 

In this section, we will explain some properties of stratum
containment.  In particular, we will prove an equivariance
result for containment of $\hG$-strata which shows that the
trivialization-free version of containment in terms of
$\sG$-strata (Definition~\ref{stratadefabstract}) is
well-defined.  We will also show that when $x\in\Ao$, $\hG$-stratum
containment with respect to the trivialization $\phi$
is equivalent to a concrete and easily verified condition on the
matrix $[\n]_\phi$.  

\begin{rmk} As one would expect, the notions of containment of strata
  in flat $\GL_n$-bundles and in rank $n$ vector bundles are the same.
  This is shown in Corollary~\ref{vbgl}.  The proof uses the fact that
  $\Rep(\GL_n)$ is generated by the standard representation $V$ in the
  sense that  every representation of $G$ may be obtained from $V$
   via some combination of duals, direct sums,
  tensor products, and subrepresentations. More generally, suppose
  that $\Rep(G)$ is generated (in the same sense) by
  $\{V_i\}\subset\Rep(G)$.  We show in
  Proposition~\ref{containops} that a flat $G$-bundle contains a
  stratum if and only if \eqref{stratadef} is satisfied for each
  $V_i$.
\end{rmk}

Stratum containment is well-behaved with respect to change of trivialization:
\begin{lem}\label{strataequivariance} If $(\sG, \n)$ contains $(x, r, \b)$
  with respect to $\phi$, then it contains $(gx,r,g\b)$ with respect
  to $g\phi$.
\end{lem}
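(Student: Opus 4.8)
The plan is to unwind the definition of containment (Definition~\ref{stratadef}) under the trivialization $g\phi$ and translate it back to a statement about $\phi$ via the gauge formula \eqref{gauge}. First I would record what must be verified: $(\sG,\n)$ contains $(gx,r,g\b)$ with respect to $g\phi$ means that for some (hence every) representative of $g\b$ in $\hfg^\vee_{gx,-r}$ — and the natural candidate is $\bAd^*(g)(\tb)$, where $\tb\in\hfg^\vee_{x,-r}$ represents $\b$ — we have
\begin{equation*}
(\n_{g\phi} - i\ddz - X_{\bAd^*(g)(\tb)})(\hV_{gx,i}) \subset \Om^1(\hV)_{gx,(i-r)+}, \qquad \forall i\in\R,\ V\in\Rep(G).
\end{equation*}
By \eqref{gauge}, $\nbr_{g\phi} = \Ad^*(g)(\nbr_\phi) - (dg)g^{-1}$, so $\n_{g\phi} = d + \Ad^*(g)(\nbr_\phi) - (dg)g^{-1}$. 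The key observation, which I would state as the central computation, is that for any representation $V$ and any section, the operator $\n_{g\phi}$ acting on $\hV$ is intertwined with $\n_\phi$ by the action of $g$: concretely, $\n_{g\phi}\circ \rho(g) = \rho(g)\circ \n_\phi$ as maps $\hV\to\Om^1(\hV)$. This is just the coordinate-free statement that $\n_\phi$ and $\n_{g\phi}$ are the matrices of the same connection $\n$ in two different trivializations related by $g$; it follows by a direct application of the Leibniz rule, using that $(dg)g^{-1}$ is precisely the Maurer--Cartan correction term.

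Granting this intertwining, the argument is bookkeeping. Since $\hG_x$-equivariance of Moy--Prasad filtrations gives $\rho(g)(\hV_{x,i}) = \hV_{gx,i}$ and $\rho(g)(\Om^1(\hV)_{x,j}) = \Om^1(\hV)_{gx,j}$ for all $i,j$, and since multiplication by $i\ddz$ commutes with $\rho(g)$, I can rewrite
\begin{equation*}
(\n_{g\phi} - i\ddz - X_{\bAd^*(g)(\tb)})(\hV_{gx,i}) = \rho(g)\bigl((\n_\phi - i\ddz - X_{\tb})(\hV_{x,i})\bigr),
\end{equation*}
provided I check that $X_{\bAd^*(g)(\tb)}$ acting on $\hV$ corresponds under $\rho(g)$ to $X_{\tb}$ — i.e. $\rho(g)\circ X_{\tb} = X_{\bAd^*(g)(\tb)}\circ\rho(g)$ on $\hV$. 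This last point is exactly the compatibility of the coadjoint action with the identification $\Om^1_F(\hfg)\cong\hfg^\vee$ used throughout Section~\ref{formfund}, together with $G$-invariance of the form $\left(,\right)$; it is the one genuinely mechanical verification in the proof. Applying $\rho(g)$ to the hypothesis $(\n_\phi - i\ddz - X_{\tb})(\hV_{x,i})\subset \Om^1(\hV)_{x,(i-r)+}$ and using equivariance of the filtration on $\Om^1(\hV)$ then yields the desired containment at $gx$.

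The main obstacle is not conceptual but notational: one must be careful about the distinction, emphasized in the paper, between the two affine duals $\hV^\vee$ and $(\hV)^\vee$ and the roles of $\kappa$ and of the fixed invariant form, so that the representative $\bAd^*(g)(\tb)$ really is the correct representative of $g\b$ and the one-form $X_{\bAd^*(g)(\tb)}$ transforms as claimed. I would isolate the intertwining identity $\n_{g\phi}\circ\rho(g)=\rho(g)\circ\n_\phi$ as a lemma (or cite it as the defining property of the matrix of a connection in Section~\ref{flatg}) and treat the rest as a short equivariance check, reducing everything — as the analogous statements earlier in the paper do — to the formal consequences of $\hG$-equivariance of the Moy--Prasad data.
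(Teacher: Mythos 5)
Your proposal is correct and follows essentially the same route as the paper: the paper writes the gauge identity compactly as $\n_{g\phi}=g\n_\phi g^{-1}$ and the coadjoint compatibility as $X_{g\tb}=gX_{\tb}g^{-1}$, which are precisely your intertwining relations $\n_{g\phi}\circ\rho(g)=\rho(g)\circ\n_\phi$ and $\rho(g)\circ X_{\tb}=X_{\bAd^*(g)(\tb)}\circ\rho(g)$, and then applies $\hG$-equivariance of the Moy--Prasad filtration exactly as you do. The bookkeeping you flag (commutation with $i\ddz$ and identifying the representative of $g\beta$) is handled implicitly in the paper's one-line chain of containments, so the two arguments are the same in substance.
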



 \begin{proof}
   Fix a representation $(V,\rho)$.  Changing the trivialization
   $\phi$ by $g$ changes $\n_\phi - i \ddz - X_{\tb}$, viewed as an
   element of $\End_k(\hV)\ddz$, to $\rho(g)(\n_\phi - i \ddz -
   X_{\tb})\rho(g)^{-1}$.  (From now on, we will omit the $\rho$'s
   from the notation.) However, the gauge change formula implies that
   $\n_{g\phi}=g\n_\phi g^{-1}$ and equivariance of Moy-Prasad
   filtrations gives $X_{g\tb}=gX_{\tb}g^{-1}$.  Accordingly,

 \begin{equation}\label{triv}
 \begin{aligned}
   g(\n_\phi - i \ddz -
   X_{\tb})g^{-1}\hV_{gx,i}&=g(\n_\phi - i \ddz -
   X_{\tb})g^{-1}g \hV_{x, i}\\ & = g (\n_\phi - i
   \ddz -
   X_{\tb})\hV_{x, i} \\
   & \subset g(\Om^1(\hV)_{x,(i-r)+})=\Om^1(\hV)_{gx,(i-r)+}.
 \end{aligned}
 \end{equation}

 \end{proof}

We now give a more explicit description of what is means for
a stratum to be contained in a flat $G$-bundle when the stratum is
based at a point in the standard apartment.  This characterization
will be useful in calculations throughout the rest of the paper.

\begin{prop}\label{strataprop} Let $x \in \Ao$.  
Then, $(\sG,  \n)$ contains the stratum $(x,r, \b)$ with respect to
the trivialization $\phi$ if and only if 
   $[\n]_\phi - \tx \ddz \in \hfg_{x,r+}^\perp$ and the coset 
   $\left([\n]_\phi - \tx \ddz \right)
  + \hfg_{x, -r+}^\vee$ determines the functional $\b \in (\hfg_{x, r}
  / \hfg_{x,r+})^\vee$.
\end{prop}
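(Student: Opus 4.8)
The plan is to unwind Definition~\ref{stratadef} in the special case $x\in\Ao$, using the explicit flat structure $\dAo{x}=d+\tx\ddz$ that was identified just before Lemma~\ref{stratunique}. The key observation is that for $x\in\Ao$, the canonical choice of the auxiliary connection $d'$ appearing in the proof of Lemma~\ref{stratunique} is exactly $\dAo{x}$, and the ``error term'' $X=\n_\phi-\dAo{x}$ is precisely $\nbr_\phi-\tx\ddz$ viewed inside $\Om^1_F(\hfg)$. So the first step is to note that $(\sG,\n)$ contains $(x,r,\b)$ with respect to $\phi$ if and only if $\n_\phi(\hV_{x,i})\subset\Om^1(\hV)_{x,i-r}$ for all $i$ and $V$ (i.e. $(\sG,\n,x,r,\phi)\in\sQ^G$) and the stratum produced by Lemma~\ref{stratunique} equals $(x,r,\b)$. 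By the construction in that proof, the latter stratum is the one determined by the functional $B\in\hfg^\vee_{x,-r}$ corresponding to $X=\nbr_\phi-\tx\ddz$; hence containment is equivalent to $\nbr_\phi-\tx\ddz\in\Om^1_F(\hfg)_{x,-r}$ together with the statement that $(\nbr_\phi-\tx\ddz)+\hfg^\vee_{x,-r+}$ determines $\b$.

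The second step is purely translation between the three incarnations of the degree-$(-r)$ filtered piece of the dual. By Proposition~\ref{dual}\eqref{dual4}, the invariant form $\left(,\right)_\ddz$ gives $\hG_x$-isomorphisms $\hfg^\vee_{x,-r}\cong\hfg_{x,-r}\cong\Om^1(\hfg)_{x,-r}$ and likewise with $-r+$; and by Proposition~\ref{dual}\eqref{dual4} again, $\Om^1(\hfg)_{x,-r}=\hfg_{x,r+}^{\perp}$ exactly (the annihilator is taken with respect to $\left(,\right)_\ddz$, and since $\hfg^\vee_{x,-r}\cong\hfg^\perp_{x,r+}$ by part (1)). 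So ``$\nbr_\phi-\tx\ddz\in\Om^1_F(\hfg)_{x,-r}$'' is literally the condition ``$\nbr_\phi-\tx\ddz\in\hfg_{x,r+}^\perp$'', and the induced coset in $\hfg^\vee_{x,-r}/\hfg^\vee_{x,-r+}$ is the one appearing in the statement. I would spell out that the pairing used to turn the coset into a functional $\b\in(\hfg_{x,r}/\hfg_{x,r+})^\vee$ is the perfect pairing of Proposition~\ref{dual}(2), which is compatible with all of this.

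The only genuine content — and the step I expect to require the most care — is verifying that, for $x\in\Ao$, the containment condition \eqref{stratadefeq} really does reduce to membership $\nbr_\phi-\tx\ddz\in\Om^1_F(\hfg)_{x,-r}$ \emph{at the level of the loop algebra}, rather than merely holding in each $\End_k(\hV)$ separately. Concretely, I must check both directions: (i) if $\nbr_\phi-\tx\ddz\in\Om^1_F(\hfg)_{x,-r}$, then for each $V$ the operator $\n_\phi-i\ddz-X_{\tb}=(\dAo{x}-i\ddz)+(X-X_{\tb})$ sends $\hV_{x,i}$ into $\Om^1(\hV)_{x,(i-r)+}$, using property (1) of $\dAo{x}$ (which gives $(\dAo{x}-i\ddz)\hV_{x,i}\subset\Om^1(\hV)_{x,i+}\subset\Om^1(\hV)_{x,(i-r)+}$ since $r\ge0$) and the fact that $X-X_{\tb}\in\Om^1_F(\hfg)_{x,-r+}$ acts on $\hV_{x,i}$ landing in $\Om^1(\hV)_{x,(i-r)+}$; and (ii) conversely, containment for all $V$ forces $X-X_{\tb}\in\Om^1_F(\hfg)_{x,-r+}$ because $\fg$ embeds in $\End_k(V)$ for a faithful $V$ and the Moy--Prasad filtration on $\hfg$ is induced from that on $\End_k(\hV)$ (Proposition~\ref{ev}\eqref{ev3} and the fact that $\hfg_{x,s}=\hfg\cap\End_k(\hV)_{x,s}$ for faithful $V$). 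Combining (i) and (ii) with the uniqueness half of Lemma~\ref{stratunique} and the filtration translation of the second step yields the proposition.
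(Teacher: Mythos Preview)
Your proof is correct and is essentially the paper's argument, repackaged through the abstraction of $\dAo{x}$ and Lemma~\ref{stratunique}. The paper proceeds more directly: it takes $X_{\tb}=[\n]_\phi-\tx\ddz$ as the representative, computes $\n_\phi-i\ddz-X_{\tb}=d+\tx\ddz-i\ddz$, and invokes Proposition~\ref{ev}\eqref{ev2} for the forward direction; for the converse it subtracts this same identity from \eqref{stratadefeq} on a faithful $V$ and uses faithfulness to conclude $[\n]_\phi-\tx\ddz-X_{\tb}\in\Om^1(\hfg)_{x,-r+}$. Your decomposition $\n_\phi-i\ddz-X_{\tb}=(\dAo{x}-i\ddz)+(X-X_{\tb})$ is the same computation, just with $X$ and $X_{\tb}$ kept separate rather than choosing $\tb$ so that they coincide.

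Two small points. First, for the bound $(\dAo{x}-i\ddz)\hV_{x,i}\subset\Om^1(\hV)_{x,i+}$ you should invoke property (2) of $\dAo{x}$ (i.e.\ \eqref{dA} with $g=1$) or Proposition~\ref{ev}\eqref{ev2} directly, not property (1), which only treats the graded piece $\hV_x(i)$. Second, the fact $\hfg_{x,s}=\hfg\cap\End_k(\hV)_{x,s}$ for faithful $V$ is Proposition~\ref{rhoG}, which in the paper's order appears \emph{after} this proposition; you can avoid forward reference by arguing directly from the grading (if $Y\in\hfg$ has lowest nonzero graded component $Y_{j_0}$ with $j_0<s$, faithfulness gives some $v\in\hV_x(i)$ with $Y_{j_0}v\neq 0$, whence $Yv\notin\hV_{x,i+s}$), which is what the paper's one-line appeal to ``faithfulness'' amounts to.
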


\begin{proof}

Assume that $\nbr_\phi$ satisfies the given conditions, and
take $V\in\Rep(G)$.  By the hypothesis, we can take $X_{\tb}=[\n]_\phi - \tx \ddz
\in \Om^1(\hfg)_{x,-r}$, so $\n_\phi - i \ddz -
X_{\tb}=d+[\n]_\phi-i\ddz-([\n]_\phi - \tx \ddz)=d+\tx\ddz-i\ddz$.
Since Proposition~\ref{ev}\eqref{ev2} implies
\begin{equation}\label{speq}
  (d+\tx\ddz-i\ddz)(\hV_{x,i})\subset\Om^1(\hV)_{x,i+}\subset
  \Om^1(\hV)_{x,(i-r)+},
\end{equation}
the defining property \eqref{stratadefeq} is satisfied.

For the converse, suppose that $(\sG, \n)$ contains the stratum
$(x,r,\b)$ with respect to the trivialization $\phi$, and let $V$ be a
faithful representation.  As usual, take $\tb \in \hfg^\vee_{x, -r}$ a
representative for $\b$ with $X_{\tb} \in \Om^1 (\hfg)_{x, -r}$ the
corresponding element.  We then have \begin{equation*}
 (d + [\n]_\phi - i \ddz - X_{\tb}) \hV_{x, i} \subset \Om^1(\hV)_{x, (i -r)+}
\end{equation*}  Subtracting \eqref{speq}, we obtain
$([\n]_\phi - \tx \ddz - X_{\tb}) \hV_{x, i} \subset \Om^1(\hV)_{x,
  (i-r)+}$ for all $i \in \R$.  The faithfulness of $V$ now implies
that $[\n]_\phi - \tx \ddz - X_{\tb} \in \Om^1(\hfg)_{x, -r+}$.  In
particular, $X_{\tb} \in ([\n]_\phi - \tx \ddz) + \Om^1(\hfg)_{x,
  -r+}$.  Viewing $[\n]_\phi - \tx \ddz$ as a functional, we see that
it lies in it $\hfg_{x, -r}^\vee\cong\hfg_{x,r+}^\perp$ and
determines the same coset in $\hfg_{x, -r}^\vee/\hfg_{x, -r+}^\vee$
as $\tb$.  This proves the result.

\end{proof}

\begin{rmk} The additional $-i\ddz$ terms in
  Definition~\ref{stratadef} (or equivalently, the $-\tx\ddz$ term in
  the previous proposition) are needed to make the map $\sQ\to\sS$
  equivariant.  The $-\tx\ddz$ term also plays an important role in
  our study of the isomonodromy equations for flat vector bundles
  (c.f. \cite[Definition 2.12]{BrSa2}).
\end{rmk}

\begin{rmk}\label{stratarmk} Any trivialization $\phi$ and $x\in\B$
  (or equivalently, any point in $\B(\Aut(\sG)$) determines a
  stratum contained in $(\sG,\n)$.  By equivariance, it is enough to
  show this for $x\in\Ao$.  Let $r$ be the smallest critical number
  satisfying $[\n]_\phi - \tx \ddz \in \hfg_{x,r+}^\perp$.  If $\b\in
  (\hfg_{x, r} / \hfg_{x,r+})^\vee$ is the induced functional, then by
  Proposition~\ref{strataprop}, $(\sG,\n)$ contains $(x,r,\b)$ with
  respect to $\phi$.
\end{rmk}

In general, a nonfundamental stratum provides very little information
about a flat $G$-bundle.  However, we will see that all flat
$G$-bundles contain fundamental strata, and the depth of any such
stratum determines whether the flat $G$-bundle is regular singular,
and if not, how irregular it is.  Moreover, we will see that it always
possible to find a fundamental stratum for which $x \in \B$ is an
\emph{optimal point} in the sense of \cite[Section
6]{MP1}.

We now recall the definition of optimal points.  Fix an alcove $C
\subset \Ao$, and let $\Sigma_C$ be the collection of minimal affine
roots on $C$, that is, the set of affine roots $\psi$ such that for
all $x \in \bar{C}$, $0\le \psi(x) \le 1$. For any nonempty subset
$\Xi\subset\Sigma_C$, define a function $f_\Xi$ on $\bar{C}$ by
\begin{equation*}
  f_\Xi (x) = \min \{ \psi(x) \mid \psi \in \Xi\}.
\end{equation*}
Choose a point $x_\Xi\in\bar{C}$ at which $f_\Xi$ attains its maximum
value and $\widetilde{x_\Xi}\in\ft_\Q$.  We can further assume that
$x_\Xi\in\bB$.   The set of optimal points in $\bar{C}$ is given by
$\Opt_C=\{x_\Xi\}$.  It can be shown that $\Opt_C$ contains all the
vertices of $\bar{C}\cap\bB$.

\begin{exam} For $\SL(V)$, there is no ambiguity about the optimal
  points in a closed alcove; they can only be taken to be the
  barycenters of the simplices.  With our convention that optimal
  points are in the reduced building, the optimal points for $\GL(V)$
  are also uniquely determined.  Thus, optimal points here give rise to
  lattice chain filtrations.
\end{exam}

We will need to understand the effect of change of trivialization on
strata contained in $\sG$.  The following lemma establishes the
necessary calculus.

\begin{lem}\mbox{}\label{actlem}
\begin{enumerate}
 \item \label{act1}  If $n \in \hN$, $([\n]_{n \phi}
   - \widetilde{n x} \ddz) \in \Ad^*(n) ( [\n]_{ \phi } - \tx \ddz) +
   \hft_{0+}\ddz$.
\item\label{act2} If $X \in \hfu_\a \cap \hfg_{x, \ell}$,
then
\begin{equation*}
[\n]_{\exp(X) \phi} - \tx \ddz \in \Ad^*(\exp(X)) ([\n]_\phi - \tx \ddz) -
\ell X \ddz +
\hfg^\vee_{x, \ell+}.
\end{equation*}
\item\label{act3} If $p \in \hG_{x}$, then
$[\n]_{p \phi} -  \tx \ddz \in \Ad^*(p)([\n]_\phi - \tx \ddz)  +
\hfg^\vee_{x,0+}$.
\item\label{act4} If $p \in \hG_{x, \ell}$ for $\ell > 0$, then
$[\n]_{p \phi} -  \tx \ddz \in \Ad^*(p)([\n]_\phi - \tx \ddz)  +
\hfg^\vee_{x, \ell}$.

\end{enumerate}
\end{lem}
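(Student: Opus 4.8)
The plan is to prove all four parts by a direct computation with the gauge transformation formula \eqref{gauge}, namely $[\n]_{g\phi} = \Ad^*(g)([\n]_\phi) - (dg)g^{-1}$, keeping careful track of the Maurer--Cartan term $(dg)g^{-1}$ and of which step of the Moy--Prasad filtration each piece lives in. The key observation throughout is that $(dg)g^{-1} = \iota_\tau\big((dg)g^{-1}\big)\ddz = \tau(g)g^{-1}\ddz$, so computing the correction term amounts to applying the Euler derivation $\tau$ and then using Proposition~\ref{ev}.

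For part~\eqref{act1}, I would write $n = t n'$ with $t\in\hT$, $n'\in N$, exactly as in the proof of Lemma~\ref{bnorm}. That lemma already computes $\tau(n)n^{-1}\in\widetilde{wx_0}+\hft_{0+}$ for the translated point; more precisely, with $n$ sending $x$ to $nx$, the same Leibniz-rule computation gives $\tau(n)n^{-1}\in(\widetilde{nx}-\Ad(n)\tx)+\hft_{0+}$, since $\tau(n')(n')^{-1}=0$ for $n'\in G$ and the torus part contributes $\tau(z^y)z^{-y}+\tau(t')(t')^{-1}$. Subtracting $\widetilde{nx}\ddz$ from $[\n]_{n\phi}=\Ad^*(n)[\n]_\phi - \tau(n)n^{-1}\ddz$ then yields $[\n]_{n\phi}-\widetilde{nx}\ddz \in \Ad^*(n)([\n]_\phi) - \Ad(n)(\tx)\ddz + \hft_{0+}\ddz = \Ad^*(n)([\n]_\phi - \tx\ddz) + \hft_{0+}\ddz$, using that $\Ad^*(n)(\tx\ddz)=\Ad(n)(\tx)\ddz$.

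For part~\eqref{act2}, with $g=\exp(X)$, $X\in\fu_\a\cap\hfg_{x,\ell}$, the Maurer--Cartan term is $\tau(\exp(X))\exp(-X)\ddz$. Since $X$ lies in a single root space, $\exp(X)=1+X+\tfrac12 X^2+\cdots$ and one computes $\tau(\exp(X))\exp(-X) = \big(\int_0^1\Ad(\exp(sX))\,ds\big)(\tau(X))$; but $\tau(X)\in\ell X + \hfg_{x,\ell+}$ by Proposition~\ref{ev}\eqref{ev2} (as $X\in\hfg_x(\ell)$ up to higher terms, $(\tau+\tx)X - \ell X\in\hfg_{x,\ell+}$, and $\ad(\tx)X = d\a(\tx)X$ is absorbed since $d\a(\tx)\in\Z$ forces it into the lattice structure — here one must be slightly careful, but the net statement $\tau(X)\ddz\in \ell X\ddz + \hfg^\vee_{x,\ell+}$ follows), and the integral operator preserves the filtration and acts as the identity modulo $\hfg_{x,\ell+}$ on $\hfg_x(\ell)$. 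Hence $\tau(\exp(X))\exp(-X)\ddz\in \ell X\ddz + \hfg^\vee_{x,\ell+}$, and substituting into \eqref{gauge} and subtracting $\tx\ddz$ (noting $\Ad^*(\exp X)(\tx\ddz) - \tx\ddz\in\hfg^\vee_{x,\ell}\ddz$, which is absorbed) gives the claim. Part~\eqref{act4} is the case $\ell>0$ with $p$ a general element of $\hG_{x,\ell}$: such $p$ is a (pro-)product of terms $\exp(X)$ with $X\in\hfg_{x,\ell}$ and of elements $\hT_{\lceil\ell\rceil}\subset\hT$; each factor contributes a correction in $\hfg^\vee_{x,\ell}\ddz$ by the computation just made (the torus factors contribute via $\tau(z^{m\lambda})z^{-m\lambda}$ for $m=\lceil\ell\rceil\ge\lceil\ell\rceil$, landing in $\hft\cdot z^{\ge\lceil\ell\rceil}\subset\hfg_{x,\ell}$), and $\Ad^*(p)$ preserves the coset modulo $\hfg^\vee_{x,\ell}$ since $\Ad(p)-1$ maps $\hfg^\vee_{x,-r}$ into $\hfg^\vee_{x,-r+\ell}\subset\hfg^\vee_{x,-r}$; an inductive/continuity argument over the finitely many jumps extends this to all of $\hG_{x,\ell}$. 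Part~\eqref{act3} is the $\ell=0$ version: write $p = n_0 \cdot p_+$ with $n_0$ representing the residual group action (or directly decompose $p\in\hG_x$ using that $\hG_x$ is generated by $\hT_0$ and the $\hU_{\psi,x,0}$), observe each generator's Maurer--Cartan correction lies in $\hft_{0+}\ddz$ or $\hfg^\vee_{x,0+}\ddz$ respectively (for $\hU_{\a,x,0}$ apply part~\eqref{act2} with $\ell=0$, where the $-\ell X\ddz$ term vanishes), and combine, using again that $\Ad^*(p)$ preserves the coset $[\n]_\phi-\tx\ddz + \hfg^\vee_{x,0+}$.

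The main obstacle I anticipate is part~\eqref{act2}: one must verify cleanly that $\tau$ applied to a root-space element $X\in\hfu_\a\cap\hfg_{x,\ell}$ produces $\ell X$ modulo $\hfg^\vee_{x,\ell+}$, disentangling the contribution of $\ad(\tx)$ (which is $d\a(\tx)$ times a scalar, and the condition $\ell = \a(\text{something})$ built into $X\in\hfg_x(\ell)$ ensures consistency) from the $z$-adic part $z\tfrac{d}{dz}$; and then controlling the integral-of-Ad operator $\int_0^1\Ad(\exp(sX))ds$ well enough to see it acts trivially modulo the next filtration step. Once \eqref{act2} is established, parts \eqref{act3} and \eqref{act4} are bootstrapped from it together with the generators-of-$\hG_{x,\ell}$ description from Section~\ref{subfilt}, and \eqref{act1} is essentially a reprise of the Lemma~\ref{bnorm} computation.
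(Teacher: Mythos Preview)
Your overall architecture matches the paper's: part~\eqref{act1} is indeed a reprise of Lemma~\ref{bnorm}, and parts~\eqref{act3}--\eqref{act4} are bootstrapped from~\eqref{act2} together with the torus contribution, exactly as in the paper.

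There is, however, a genuine gap in your treatment of part~\eqref{act2}. You assert two things that are each false: first, that $\tau(X)\in \ell X + \hfg_{x,\ell+}$ (justified by claiming the $\ad(\tx)X$ term ``is absorbed since $d\a(\tx)\in\Z$''); second, that $\Ad^*(\exp X)(\tx\,\ddz)-\tx\,\ddz$ lies in $\hfg^\vee_{x,\ell+}$ and so can be discarded. Neither holds. For $X\in\hfu_\a\cap\hfg_{x,\ell}$, Proposition~\ref{ev} gives $(\tau+\ad\tx)(X)\in \ell X+\hfg_{x,\ell+}$, hence $\tau(X)\in \ell X-\ad(\tx)(X)+\hfg_{x,\ell+}$; the term $\ad(\tx)(X)=\a(x)X$ lies in $\hfg_{x}(\ell)$, not in $\hfg_{x,\ell+}$, and $\a(x)$ is a real number with no reason to be integral. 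Likewise $\Ad(\exp X)(\tx)-\tx=[X,\tx]=-\ad(\tx)(X)$ exactly (the series terminates because $[\fu_\a,\fu_\a]=0$), and this again sits in $\hfg_x(\ell)$, not in $\hfg_{x,\ell+}$. The point you are missing is that these two non-negligible terms are \emph{equal} and cancel when you assemble
\[
[\n]_{u\phi}-\tx\,\ddz=\Ad^*(u)([\n]_\phi-\tx\,\ddz)+\bigl(\Ad^*(u)(\tx\,\ddz)-\tx\,\ddz\bigr)-(du)u^{-1},
\]
leaving precisely $-\ell X\,\ddz$ modulo $\hfg^\vee_{x,\ell+}$. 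That cancellation is the heart of the computation, and your argument as written does not see it.

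A secondary remark: the integral-of-$\Ad$ formula is unnecessary baggage. Since $X$ lies in a single root space and the root system is reduced, $[X,\tau(X)]\in[\fu_\a,\fu_\a]=0$, so $(d\exp X)\exp(-X)=\tau(X)\,\ddz$ on the nose; the paper uses this directly.
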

\begin{proof}

  By Lemma~\ref{bnorm}, $\widetilde{n x} \in \Ad(n) (\tx) - \tau(n)
  n^{-1} + \hft_{0+}$.  Dualizing, we obtain $\widetilde{n x} \ddz \in
  \Ad^*(n) (\tx \ddz) - (dn) n^{-1} + \hft_{0+}\ddz$.
  Part~\eqref{act1} follows by applying the gauge transformation
  formula and substituting the above expression into $[\n]_{n \phi} -
  \widetilde{n x} \ddz$.

Suppose that $X \in \hfu_\a \cap \hfg_{x, \ell}$, and write $u =
\exp(X)$.  In order to prove \eqref{act2}, it suffices to show that
\begin{equation*}
  (d u) u^{-1} \in \Ad^*(u) (\tx \ddz) - \tx \ddz + \ell X \ddz + \hfg_{x,
    \ell+}^\vee.
\end{equation*}
Recall that $\tau (X)+\ad (\tx) (X) \in \ell X + \hfg_{x, \ell+}$ by
Proposition~\ref{ev}.  Thus,
\begin{equation*}
  \begin{aligned}
(d u) u^{-1} &= \tau (X)\ddz \in -\ad (\tx) (X) \ddz  + \ell X \ddz + \hfg^\vee_{x, \ell+} \\
&= \left(\Ad(u) (\tx) - \tx \right) \ddz + \ell X \ddz +\hfg^\vee_{x, \ell+}.
\end{aligned}
\end{equation*}
The right hand side of this equation proves the desired result.

Observe that when $t \in T_\ell$, $(d t ) (t^{-1}) \in \ft_1 \ddz $
(resp. $\ft_\ell \ddz$ when $t \in T_\ell$ and $\ell > 0$).
Furthermore, $\widetilde{t x} = \tx$ for all $t \in T_\ell$, since
$T_\ell \subseteq T_0$.  Since $\hG_{x, \ell}$ is generated by the
root subgroups $\hU_\a \cap \hG_{x, \ell}$ and the congruence
subgroups $T_\ell \subset T$, statements \eqref{act3} and \eqref{act4}
now follow from \eqref{act2}.

\end{proof}

Recall that the treatment of formal flat vector bundles in
\cite{BrSa1} only involved strata coming from lattice chain
filtrations.  Moreover, the definition of containment there differs
from Definition~\ref{vbcontaindef}, and it is not equivalent for strata
of depth $0$.  However, we will need to know that certain results
from~\cite{BrSa1} remain true using the present definition of
containment and allowing arbitrary periodic filtrations.  The
following theorem generalizes \cite[Theorem 4.10]{BrSa1}; we relegate
the proof to Section~\ref{nonbaryproof} of the  appendix.

\begin{prop} \label{nonbary}The slope of a flat vector bundle $(U,\n)$
  is the minimum of the depths of the strata $(x,r,\b)$ contained in
  it.  If $(x,r,\b)$ is fundamental, then $r=\slope(U,\n)$; the
  converse is true if $r>0$.  In particular, $(U,\n)$ is regular
  singular if and only if it contains a stratum of depth $0$.
\end{prop}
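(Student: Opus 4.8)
The plan is to reduce the statement to its analogue over a lattice-chain filtration, which is essentially \cite[Theorem 4.10]{BrSa1}, and then bootstrap to arbitrary periodic filtrations using two observations: (a) every point $x\in\B(\GL(U))$ lies in the closure of some alcove, and among the points lying over a fixed vertex or barycenter the associated Moy--Prasad filtration on $\gl(U)$ is (up to a uniform index shift) a lattice-chain filtration; (b) the notion of containment in Definition~\ref{vbcontaindef}, for strata of positive depth, agrees with the containment notion used in \cite{BrSa1} — this is precisely the content of Remark~\ref{vbstratarem} and Proposition~\ref{vbequiv} in the appendix, which I would invoke. So the first step is to recall from \cite{BrSa1} that for a stratum $(P,r,\b)$ based at a parahoric $P$, if $(U,\n)$ contains it then $r/e_P\ge\slope(U,\n)$, with equality in the irregular case iff the stratum is fundamental, and that a fundamental stratum of minimal depth always exists. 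Translating $e_P$-normalized depths into the Moy--Prasad normalization (where $P^j=\gl(U)_{x,j/e_P}$ for $x$ over the barycenter) turns $r/e_P$ into the depth of the $U$-stratum in the present sense.

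Next I would handle an arbitrary $x\in\B$, not necessarily over a barycenter. Writing $x$ over a point $\bar x$ in a facet with parahoric $P$, the filtration $\{\gl(U)_{x,r}\}$ differs from $\{\gl(U)_{x_0,r}\}$ (with $x_0$ over the barycenter of the same simplex) only by the set of critical numbers being a shifted, possibly non-uniform subset; crucially, the graded pieces $\gl(U)_{x,r}/\gl(U)_{x,r+}$ are still sums of root spaces for the torus giving the apartment, and $H_x\subseteq H_{x_0}$ as reductive quotients. Since a functional is nilpotent iff its homogeneous representative is (Proposition~\ref{nilpotent}, and its $\GL$-analogue via the trace form), fundamentality of $(x,r,\b)$ can be checked inside $\gl(U)_{x_0,r}/\gl(U)_{x_0,r+}$. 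This lets me transfer any stratum based at an arbitrary point to one based at a barycentric point without increasing depth and without destroying (non)fundamentality: if $(U,\n)$ contains $(x,r,\b)$, then reading the matrix $[\n]_\phi-\tilde x\ddz$ against the coarser filtration $\{\gl(U)_{x_0,s}\}$ shows it lies in $\gl(U)^\perp_{x_0,r+}$ as well, so $(U,\n)$ contains a stratum $(x_0,r',\b_0)$ with $r'\ge r$ — wait, one must be careful here: the depth could drop, not rise. This is exactly the subtlety, so let me restate: passing to $x_0$ can only \emph{decrease} the depth (the matrix is more deeply in the coarser filtration, never less), so for the lower bound direction I instead argue that any stratum based anywhere, when refined to lie over a barycenter, yields a stratum of depth $\le r$, and then apply the lattice-chain inequality to conclude $\slope\le r'\le r$. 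Hence $\slope(U,\n)\le$ every depth, and the lattice-chain fundamental stratum realizes the minimum.

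For the sharper claims, I would argue as follows. If $(x,r,\b)$ is fundamental, its homogeneous representative is nonnilpotent; I would show this forces $r=\slope(U,\n)$ by comparing with the minimal lattice-chain fundamental stratum and using that a nonnilpotent graded leading term cannot be ``gauged away'' to a deeper filtration level — concretely, if $r>\slope$ one could produce, after a gauge transformation supported in $\hG_{x+}$ (Lemma-type arguments as in \cite{BrSa1}), a contradiction with the boundedness condition \eqref{irregprecise} defining the slope. Conversely, for $r=\slope(U,\n)>0$, I would invoke the existence of \emph{some} fundamental stratum of depth $\slope$ together with Theorem~\ref{associateness} (or rather its $\GL_n$/vector-bundle specialization): any two strata of the same depth contained in $(\sG,\n)$ are associates, and being an associate of a fundamental stratum forces fundamentality, since associateness identifies the cosets $\Ad^*(g)(\tilde\b)+\hfg^\vee_{gx,-r+}$ and nilpotency of a coset is a conjugation-invariant, hence associate-invariant, property. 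Finally, the regular-singular characterization: $(U,\n)$ is regular singular iff $\slope=0$, and $\slope=0$ iff the minimum depth is $0$, i.e.\ iff $(U,\n)$ contains a depth-$0$ stratum; this is immediate once the ``slope $=$ minimum depth'' equality is in hand, using that a lattice $L$ with $(\iota_\tau\circ\n)(L)\subseteq L$ is exactly what it means for $[\n]_\phi$ to lie in $\gl(U)_{x,0+}^\perp$ for the corresponding $x$. The main obstacle is the second step — controlling how the depth and fundamentality of a stratum change when one refines or coarsens the underlying filtration between an arbitrary point and a barycentric one — and getting the inequalities to point in the right direction; once that bookkeeping is done, everything else is a translation of \cite[Theorem 4.10]{BrSa1} through Proposition~\ref{vbequiv} and the dictionary $P^j\leftrightarrow\gl(U)_{x,j/e_P}$.
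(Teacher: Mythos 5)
Your high-level plan — reduce to lattice-chain filtrations using \cite[Theorem 4.10]{BrSa1} and the containment dictionary of Proposition~\ref{vbequiv} — is exactly what the paper does. But two of your intermediate steps have genuine gaps, both concerning the ``bookkeeping'' you flag as the main obstacle, and one does not survive the counterexample the paper itself records.

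The first gap is the direction of the depth comparison when moving to a barycentric point. You vacillate, then settle on the claim that passing from an arbitrary $x$ to the barycenter $x_0$ of the simplex below it can only \emph{decrease} the depth. This is false. In Example~\ref{slnexample}, the flat $\SL_3$-bundle with $[\n]=Xz^{-m}\ddz$, $X=z^{-1}e_{12}+e_{21}$, contains a fundamental stratum of depth $m+\tfrac12$ at a point $x$ inside the closed fundamental alcove lying on a specific line, but the stratum it contains at the barycenter of that alcove has depth $m+\tfrac23$: going to the barycenter \emph{increases} the depth. Consequently your lower-bound chain $\slope\le r'\le r$ does not come for free from a coarsening argument. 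The paper's lemma before the proof handles this correctly, and it is more delicate than a ``go to the barycenter'' step: starting from a fundamental stratum of positive depth $r$ based in a closed chamber, one uses the nonnilpotency of the graded representative $X$ to force $r$ to be rational with denominator $f\le e$, and then constructs a sub-lattice chain of period exactly $f$ (not the full period $e$ of the chain attached to the barycenter). The optimal point attached to \emph{that} sub-chain carries a fundamental stratum of the same depth $r$; fundamentality is preserved by \cite[Lemma 2.1]{Bus} via the non-vanishing of $(\iota_\tau\circ[\n]_\phi)^m$ modulo $\gl(\hV)_{x',-mr+1/f}$.

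The second gap is your argument that a stratum of depth $r=\slope(U,\n)>0$ must be fundamental because it is an \emph{associate} of a fundamental one. Associateness (Definition~\ref{assdef}) only asserts that the cosets $\Ad^*(g)(\tb)+\hfg^\vee_{gx,-r+}$ and $\tb'+\hfg^\vee_{y,-r+}$ \emph{intersect}, not that they coincide; so the most you get is one common nonnilpotent element, which by Proposition~\ref{nilpotent} is far from showing that the coset $\tb'+\hfg^\vee_{y,-r+}$ is nilpotent-free (equivalently, that the \emph{graded} representative $\tb'_0$ is nonnilpotent). The implication ``depth equals slope $\Rightarrow$ fundamental'' does hold, but not for the reason you give; the paper obtains it from the $\GL_n$ case of Proposition~\ref{minr}, whose proof in turn uses \cite[Propositions 6.3 and 6.4]{MP1} via Lemma~\ref{unstable}, and has nothing to do with associateness. (The $r=0$ part of the proposition is handled separately via Proposition~\ref{regsing}, pushing a depth-$0$ stratum to one based in a minimal facet, where the filtration is uniform.) If you want to retain your strategy, you should replace the associateness argument by a citation to Proposition~\ref{minr} (or reprove its contrapositive: a nonfundamental stratum of positive depth can always be improved to a strictly smaller depth, via \cite[Proposition 6.3]{MP1}).
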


\subsection{Regular and irregular singularity}

Recall that a formal flat $G$-bundle $(\sG, \n)$ is regular singular
if for all representations $V$, the associated flat vector bundle
$(V_\sG, \n_V)$ is regular singular.  Otherwise, it is called
irregular singular.  As one would hope, a flat $\GL_n$-bundle is
regular singular if and only if the corresponding flat vector bundle
is regular singular.  This is indeed true; see
Corollary~\ref{glslope}.

We next show that if a flat $G$-bundle contains a stratum of depth
$r$, then the same is true for each associated flat vector bundle.

\begin{prop}\label{rhoG} 
  Let $\rho : G \to \GL(V)$ be a representation for $G$.
\begin{enumerate}\item For any $x\in\B$, there is a uniquely determined $\rho_*(x) \in
  \B(\GL(\hV))$ that induces the same filtration on $\hV$ as $x$.
  Moreover, $\gl(\hV)_{\rho_*(x), r} \cap \rho (\hfg) =
  \rho(\hfg_{x,r})$ for all $r \in \R$.
\item If $T_\rho$ is a maximal torus in $\GL(V)$ containing $\rho(T)$,
  then  $\rho_*$ restricts to the map $\Ao\to
  \A(\widehat{T_\rho})$ determined by $\widetilde{\rho_*(x)}=\rho(\tx)$.
\end{enumerate}
\end{prop}
\begin{proof}

  Since $x$ determines a periodic filtration on $\hV$ and
  $\B(\GL(\hV))$ parameterizes such filtrations, there is a unique
  $\rho_*(x)\in\B(\GL(\hV))$ such that $\hV_{\rho_*(x),r}=\hV_{x,r}$
  for all $r$.  Next, choose a maximal torus $T_\rho \subset \GL(V)$
  such that $\rho (T) \subset T_\rho$. If $x\in\Ao$, let
  $y\in\A(\widehat{T_\rho})\subset\B(\GL(\hV))$ be the point determined by
  $\ty=\rho(\tx)$. To prove that $y=\rho_*(x)$, we must show that
  $\hV_{y,r}=\hV_{x,r}$ for all $r$.  It suffices to show that
  $\hV_{y}(r)=\hV_{x}(r)$ for all $r$.  By
  Proposition~\ref{ev}\eqref{ev1}, $\hV_x(r)$ is the $r$-eigenspace of
  the action of $\tau+\tx$ on $\hV$.  Since this operator on $\hV$
  coincides with $\tau+\rho(\tx)=\tau+\widetilde{\rho_*(x)}$, another
  application of Proposition~\ref{ev}\eqref{ev1} gives the desired
  equality of graded spaces.

  To prove the remaining statement, first note that by equivariance of
  Moy-Prasad filtrations, we may assume that $x\in\Ao$.  Observe that
  $(\tau + \ad(\rho(\tx)) \rho (X) = \rho ((\tau + \ad (\tx)) (X))$
  for all $X \in \hfg$.  Therefore, if $X_r \in \hfg_x(r),$ then $\rho
  (X_r) \in \gl(\hV)_{\rho_*(x)} (r)$.  By continuity, there exists $s
  \ge r$ such that $\hfg_{x, s} \subset
  \rho^{-1}(\gl(V)_{\rho_*(x),r+})$.  Repeated application of
  Proposition~\ref{ev}\eqref{ev3} shows that $\sum_{r\le j < s}
  \hfg_x(j)$ constitutes a full set of coset representatives for
  $\hfg_{x,r} / \hfg_{x, s}$.  Since $\rho(\hfg_x(j)) \subset
  \gl(V)_{\rho_*(x), r}$ by the work above, we deduce that $\rho
  (\hfg_{x, r}) \subset \gl(V)_{\rho_*(x),r}$.

  We now show that $X \in\gl(\hV)_{\rho(x), r} \cap \rho (\hfg)$
  implies $X \in \rho(\hfg_{x,r})$.  If this is false, let $r'<r$ be
  the largest critical number $s$ for which $\hfg_{x,s}$ intersects
  $\rho^{-1}(X)$.  Take $Y\in\hfg_{x, r'}$ with $\rho(Y)=X$.  By
  Proposition~\ref{ev}\eqref{ev3}, there exists $Y_{r'} \in \hfg_x
  (r')$ such that $Y\in Y_{r'}+\hfg_{x, r'+}$ The argument above shows
  that $\rho (Y_{r'}) \in X + \gl(\hV)_{\rho_*(x), r'+}$, and since
  $r' < r$, this implies that $\rho (Y_{r'}) \in \gl(\hV)_{\rho_*(x),
    r'+}$.  However, we also have $\rho(Y_{r'})\in
  \gl(\hV)_{\rho_*(x)} (r')$.  Since $\gl(\hV)_{\rho_*(x), r'+} \cap
  \gl(\hV)_{\rho_*(x)} (r') = \{0\}$, we deduce that $\rho(Y_{r'}) =
  0$.  This means that $\rho(Y-Y_{r'})=X$, but $Y-Y_{r'}\notin
  \hfg_{x,r'}$, contradicting the definition of $r'$.  It follows that
  $X \in \rho (\hfg_{x, r})$.  

\end{proof}

If $\b$ is a functional on $\hfg_{x,r}/\hfg_{x,r+}$ and $(V,\rho)$ is
a representation of $G$, we can use the proposition to define a
functional $\rho(\b)$ on $\gl(V)_{\rho_*(x),r}/\gl(V)_{\rho_*(x),r+}$
which is represented (with respect to the residue of the trace form)
by $\rho(\tb)\in\gl(V)^\vee_{\rho_*(x),-r}$.  The proposition also
shows that the functional is independent of the choice of
representative $\tb$.

\begin{cor}\label{rhostrat} Let $(V,\rho)$ be a representation of $G$.  If $(\sG,\n)$
  contains the stratum $(x,r,\b)$ with respect to $\phi$, then
  $(V_\sG,\n_V)$ contains the stratum $(\rho_*(x),r,\rho(\b))$ with
  respect to the induced trivialization $\phi_V$.
\end{cor}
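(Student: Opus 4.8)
The corollary unwinds to a translation, through the representation $\rho$, of the defining inclusions of Definition~\ref{stratadef}; the plan is to match up, on the $G$ side and on the $\GL(V)$ side, the three ingredients of stratum containment -- the Moy--Prasad filtration, the connection, and the functional -- and then observe that \eqref{stratadefeq} specialized to $V$ is exactly \eqref{vbcontain} for $(V_\sG,\n_V)$.

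First I would fix a representative $\tb\in\hfg^\vee_{x,-r}$ of $\b$. By the discussion preceding the corollary, $\rho(\tb)$ is then a representative of $\rho(\b)$, and the one-form representing $\rho(\tb)$ with respect to the residue of the trace form is precisely $\rho(X_{\tb})$, the image of $X_{\tb}\in\Om^1_F(\hfg)$ under $\rho$. Next I would record the two facts that make the translation work. On one hand, by Proposition~\ref{rhoG}(1) the point $\rho_*(x)\in\B_{\GL(\hV)}$ induces the same filtration on $\hV$ as $x$, so $\hV_{x,i}=\hV_{\rho_*(x),i}$ for all $i$, and hence also $\Om^1(\hV)_{x,j}=\Om^1(\hV)_{\rho_*(x),j}$ for all $j$ under $u\mapsto u\ddz$. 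On the other hand, with respect to $\phi_V$ the connection $\n_V$ on $\hV$ is $d+\rho([\n]_\phi)$, so the operator $\n_\phi-i\ddz-X_{\tb}$ of Definition~\ref{stratadef} -- which is already to be read on $\hV$ via $\rho$ -- is literally the operator $\n_V-i\ddz-\rho(X_{\tb})$.

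Now the argument closes in one line. Since $(\sG,\n)$ contains $(x,r,\b)$ with respect to $\phi$, instantiating \eqref{stratadefeq} at the representation $V$ yields $(\n_\phi-i\ddz-X_{\tb})(\hV_{x,i})\subseteq\Om^1(\hV)_{x,(i-r)+}$ for every $i\in\R$. Rewriting each term by the equalities just noted turns this into $(\n_V-i\ddz-\rho(X_{\tb}))(\hV_{\rho_*(x),i})\subseteq\Om^1(\hV)_{\rho_*(x),(i-r)+}$ for all $i$, which is precisely the condition \eqref{vbcontain} saying that the flat vector bundle $(V_\sG,\n_V)$ contains the $\hV$-stratum $(\rho_*(x),r,\rho(\b))$ with respect to $\phi_V$.

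I do not anticipate a genuine obstacle: the substantive content has already been extracted into Proposition~\ref{rhoG} and the paragraph that introduced $\rho(\b)$, so what remains is bookkeeping. The one point worth making explicit, rather than leaving tacit, is the identity that the one-form attached to $\rho(\tb)$ via the trace form on $\gl(V)$ coincides with $\rho$ applied to the one-form attached to $\tb$ via the fixed invariant form on $\fg$; this is essentially how $\rho(\tb)$ was defined, and stating it keeps the chain of rewrites unambiguous.
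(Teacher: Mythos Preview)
Your argument is correct and essentially identical to the paper's: both proofs use Proposition~\ref{rhoG} to identify $\hV_{x,i}$ with $\hV_{\rho_*(x),i}$, observe that $X_{\rho(\tb)}=\rho(X_{\tb})$ so that the operator $\n_\phi-i\ddz-X_{\tb}$ acting on $\hV$ via $\rho$ is exactly $\n_{V,\phi}-i\ddz-X_{\rho(\tb)}$, and then read \eqref{stratadefeq} for $V$ as \eqref{vbcontain}. Your explicit remark that the trace-form one-form of $\rho(\tb)$ equals $\rho(X_{\tb})$ is the one point the paper states as ``obvious'', so you have simply been slightly more careful.
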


\begin{proof} By Definition~\ref{stratadef}, $(\n_\phi - i \ddz -
  X_{\tb})(\hV_{x, i}) \subset \Om^1(\hV)_{x, (i-r)+}$ for all
  $i\in\R$.  Since is obvious that
  $X_{\rho(\tb)}=\rho(X_{\tb})\in\Om^1(\gl(\hV))$, the action of $(\n_\phi - i \ddz -
  X_{\tb})$ on $\hV$ is the same as that given by $\rho(\n_\phi - i \ddz -
  X_{\tb})=\n_{V,\phi}-i\ddz-X_{\rho(\tb)}$.  Applying
    Proposition~\ref{rhoG}, we obtain $(\n_\phi - i \ddz -
  X_{\tb})(\hV_{\rho_*(x), i}) \subset \Om^1(\hV)_{\rho_*(x), (i-r)+}$
  for all $i$.  By Definition~\ref{vbcontaindef}, $(\hV,\n_{V,\phi})$
  contains the stratum $(\rho_*(x),r,\rho(\b))$.
\end{proof}

Since the slope of a vector bundle is the minimum depth of a stratum
that it contains, we immediately obtain an upper bound on the slope of
$(V_\sG,\n_V)$.

\begin{cor}\label{upperbound} The slope of $(V_\sG,\n_V)$ is not greater than the
  minimum depth of a stratum contained in $(\sG,\n)$.
\end{cor}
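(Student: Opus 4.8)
The plan is to reduce the statement immediately to the two preceding results. It suffices to show that whenever $(\sG,\n)$ contains a stratum of depth $r$ with respect to \emph{some} trivialization, the associated flat vector bundle $(V_\sG,\n_V)$ has slope at most $r$; taking the infimum over all such $r$ then yields the corollary exactly as stated.

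So first I would fix a stratum $(x,r,\b)$ contained in $(\sG,\n)$ with respect to a trivialization $\phi$. By Corollary~\ref{rhostrat}, the flat vector bundle $(V_\sG,\n_V)$ then contains the stratum $(\rho_*(x),r,\rho(\b))$ with respect to the induced trivialization $\phi_V$. The key observation is that this stratum again has depth $r$: by Proposition~\ref{rhoG} the point $\rho_*(x)$ induces the same filtration on $\hV$ as $x$ does, and by the paragraph following that proposition $\rho(\b)$ is, by construction, a functional on $\gl(V)_{\rho_*(x),r}/\gl(V)_{\rho_*(x),r+}$.

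Finally I would invoke Proposition~\ref{nonbary}, which identifies the slope of any flat vector bundle with the minimum of the depths of the strata contained in it. Since $(V_\sG,\n_V)$ contains a stratum of depth $r$, its slope is at most $r$, which is the desired inequality.

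I expect no real obstacle here. All the substance lies in Corollary~\ref{rhostrat} (which pushes a $G$-stratum forward along a representation, resting in turn on Proposition~\ref{rhoG} comparing the Moy--Prasad filtrations on $\hfg$ and on $\gl(\hV)$) and in Proposition~\ref{nonbary} (the flat-vector-bundle case of the minimal-depth characterization of the slope). The only point requiring a moment's care is that the depth is unchanged under the pushforward, and this is immediate from the definition of $\rho(\b)$.
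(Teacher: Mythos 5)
Your proof is correct and follows essentially the same route as the paper: push the stratum forward via Corollary~\ref{rhostrat} and then apply the characterization of the slope of a flat vector bundle as the minimum depth of a contained stratum (Proposition~\ref{nonbary}).
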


Next, we show that if a flat $G$-bundle contains a stratum of depth
$0$, then it is regular singular.  We will see later that the converse
is also true.

 \begin{prop}\label{regsing}    Suppose that
   the flat $G$-bundle $(\sG, \n)$ contains a stratum $(x, 0, \b)$ of
   depth $0$ with respect to the trivialization $\phi$.  The following
   statements hold.
   \begin{enumerate}\item The flat $G$-bundle $(\sG, \n)$ is regular
     singular.
   \item Suppose $x\in\Ao$, and let $\Delta\subset\Ao$ be the open
     facet containing $x$.  Then, for any $y \in\bar{\Delta}$,
     $(\sG,\n)$ contains the stratum $(y, 0, \b^y)$ with respect to
     $\phi$, with $\b^y$ induced by $[\n]_\phi-\ty\ddz$.  In
     particular, this is true for $y$ in a minimal facet contained in
     $\bar{\Delta}$.
   \end{enumerate}
 \end{prop}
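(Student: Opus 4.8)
The plan is to treat the two parts separately; each reduces quickly to results already established in this section.

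For part (1) I would argue one representation at a time. Fix $(V,\rho)\in\Rep(G)$. By Corollary~\ref{rhostrat}, the associated flat vector bundle $(V_\sG,\n_V)$ contains the stratum $(\rho_*(x),0,\rho(\b))$ with respect to the induced trivialization $\phi_V$. This is a stratum of depth $0$, so Proposition~\ref{nonbary} shows that $(V_\sG,\n_V)$ is regular singular. Since $V$ was arbitrary, $(\sG,\n)$ is regular singular by definition. (Alternatively, one may invoke Corollary~\ref{upperbound}: the minimum depth of a stratum contained in $(\sG,\n)$ is $0$, so every $(V_\sG,\n_V)$ has slope $\le 0$, hence slope $0$.)

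For part (2), suppose $x\in\Ao$ lies in the open facet $\Delta$ and let $y\in\bar\Delta$, so that in particular $y\in\Ao$. By Proposition~\ref{strataprop} together with Proposition~\ref{dual}, the hypothesis that $(\sG,\n)$ contains $(x,0,\b)$ with respect to $\phi$ is equivalent to the statement that $[\n]_\phi-\tx\ddz$ represents an element of the parahoric subalgebra $\hfg_x=\hfg_{x,0}$, with $\b$ the image of that element in $\hfg_{x,0}/\hfg_{x,0+}$. Now write
\[
[\n]_\phi-\ty\ddz=\bigl([\n]_\phi-\tx\ddz\bigr)+(\tx-\ty)\ddz .
\]
The first summand represents an element of $\hfg_x$, and since $y\in\bar\Delta$ the facet of $y$ is a face of $\Delta$, which carries a larger parahoric (its defining affine-root inequalities are the weaker ones), so $\hfg_x\subseteq\hfg_y$. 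The second summand represents $\tx-\ty\in\ft_\R\subseteq\ft\subseteq\hft_{y,0}\subseteq\hfg_{y,0}=\hfg_y$, since the constant part of $\hfg$ lies in every parahoric subalgebra. Hence $[\n]_\phi-\ty\ddz$ represents an element of $\hfg_y$, and by Proposition~\ref{strataprop} applied at the point $y\in\Ao$, $(\sG,\n)$ contains the stratum $(y,0,\b^y)$ with respect to $\phi$, with $\b^y$ the functional induced by $[\n]_\phi-\ty\ddz$. The closing assertion is the special case in which $y$ is chosen in a minimal facet of $\bar\Delta$; such a facet exists because the closure of $\Delta$ contains faces of every dimension down to the minimal one.

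The only point requiring care is the parahoric bookkeeping: extracting from Propositions~\ref{strataprop} and~\ref{dual} the clean reformulation that containment of the depth-$0$ stratum at $x\in\Ao$ is equivalent to $[\n]_\phi-\tx\ddz$ representing an element of $\hfg_x$, and then using that moving $x$ to a point of $\bar\Delta$ only enlarges the parahoric and that $\ft_\R$ sits inside every parahoric subalgebra coming from a point of $\Ao$. None of this is deep, but it requires staying consistent about the identifications of $\hfg$, $\hfg^\vee$ and $\Om^1(\hfg)$ in force throughout the section.
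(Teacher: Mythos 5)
Your argument for part (2) is correct and essentially the paper's proof, just phrased with the inclusion $\hfg_{x,0}\subseteq\hfg_{y,0}$ rather than the dual inclusion $\hfg_{y,0+}\subseteq\hfg_{x,0+}$ (the paper passes to annihilators: $\hfg^\perp_{x,0+}\subseteq\hfg^\perp_{y,0+}$). Both are the same observation that moving to a face enlarges the parahoric.

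Part (1), however, has a dependency problem. Both of your routes (Proposition~\ref{nonbary} directly, or via Corollary~\ref{upperbound}) rest on the claim that a flat vector bundle containing a depth-$0$ stratum is regular singular. But the proof of Proposition~\ref{nonbary} in the appendix (Section~\ref{nonbaryproof}) invokes Proposition~\ref{regsing} itself, ``for $G=\GL_n$,'' precisely to move from an arbitrary depth-$0$ stratum to one based at a point in a minimal facet, where the uniform-filtration results of~\cite{BrSa1} apply. So you are quoting a statement whose proof quotes the proposition you are trying to prove. The circle can in principle be broken --- Proposition~\ref{nonbary} only needs part (2) of Proposition~\ref{regsing}, and your part (2) is self-contained --- but you present the two parts as independent (``each reduces quickly to results already established''), and you give part (1) first; without spelling out this careful ordering, a reader has no reason to believe the argument is not circular.

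The paper avoids the appendix entirely for part (1) with a direct argument you should adopt: from Proposition~\ref{strataprop} one has $[\n]_\phi-\tx\ddz\in\hfg^\perp_{x,0+}$, and since $\tx\ddz\in\hfg^\perp_{x,0+}$ as well, $\iota_\tau([\n]_\phi)\in\hfg_{x,0}$. For any $(V,\rho)\in\Rep(G)$, $\hfg_{x,0}$ stabilizes the lattice $\hV_{x,0}$, so $\iota_\tau\circ\n_\phi=\tau+\rho(\iota_\tau([\n]_\phi))$ preserves $\hV_{x,0}$, and $(V_\sG,\n_V)$ is regular singular by the lattice-stability characterization. This is two lines and uses nothing beyond Proposition~\ref{strataprop} and the definition of regular singularity.
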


 \begin{proof} By equivariance, we may assume that $x\in\Ao$.  By
   Proposition~\ref{strataprop}, $[\n]_\phi-\tx\frac{dz}{z}\in
   \hfg^\perp_{x, 0+}$.  Since the same is true for $\tx\frac{dz}{z}$,
   $[\n]_\phi\in\hfg^\perp_{x, 0+}$ as well.  Take $y\in\bar{\Delta}$.
   Since $\hfg_{y+} \subset \hfg_{x+}$, we have
   $[\n]_\phi\in\hfg^\perp_{x+} \subset \hfg^\perp_{y+}$.
   Accordingly, $[\n]_\phi-\ty\frac{dz}{z}\in\hfg^\perp_{y+}$, and
   $(y,0,\b^y)$ is contained in $(\sG, \n)$.

   Now, suppose that $V$ is a finite dimensional representation for
   $G$. We have $[\n]_\phi = \iota_\tau([\n]_{\phi}) \ddz$ with
   $\iota_\tau([\n]_{\phi})\in \hfg_{x, 0}$.  Since $\hfg_{x, 0}$
   preserves the lattice $\hV_{x, 0} \subset \hV$,
   $\iota_\tau([\n]_{\phi})(\hV_{x,0})\subset \hV_{x,0}$, and it
   follows that $(V_\sG, \n_V)$ is regular singular.
  
 \end{proof}

If a flat $G$-bundle contains a fundamental stratum of positive depth,
then at least one of the associated vector bundles is irregular
singular.  In fact, we can be more specific.

\begin{prop}\label{adjchar}
  If $(\sG, \n)$ contains a fundamental stratum $(x, r, \b)$ of depth
  $r > 0$, then either the flat bundle $(\fg_{\sG},\n_\fg)$
  corresponding to the adjoint representation has slope $r$ or $G$ has
  a one dimensional representation $W$ such that $(W_\sG,\n_W)$ has
  slope $r$.
\end{prop}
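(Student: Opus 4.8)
The plan is to push the fundamental stratum $(x,r,\b)$ forward along the adjoint representation and along the characters of $G$, and to use the decomposition $\fg=\fz\oplus[\fg,\fg]$ to decide which of these two detects the slope. First I would use $\hG$-equivariance of stratum containment and the $\hG$-invariance of fundamentality (Lemma~\ref{strataequivariance}, Proposition~\ref{equivariance}) to reduce to $x\in\Ao$, and let $\tbo\in\hfg^\vee_x(-r)$ denote the homogeneous representative of $\b$, identified with an element of $\hfg_x(-r)$ via the invariant form (Proposition~\ref{dual}\eqref{dual4}). For any $(V,\rho)\in\Rep(G)$, Corollary~\ref{rhostrat} gives that $(V_\sG,\n_V)$ contains $(\rho_*(x),r,\rho(\b))$, whose homogeneous representative is $\rho(\tbo)$ (Proposition~\ref{rhoG} and the discussion preceding Corollary~\ref{rhostrat}); since $r>0$, Proposition~\ref{nonbary} then yields $\slope(V_\sG,\n_V)=r$ the moment this stratum is fundamental. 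So the goal reduces to showing that $\rho(\b)$ is fundamental for $\rho=\ad$ or for some character $\rho$ of $G$.

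Next I would record the GIT mechanism. Writing $\hfg_x(-r)=\widehat{\fz}_x(-r)\oplus\widehat{[\fg,\fg]}_x(-r)$, with $\widehat{\fz}_x(-r)=\fz z^{-r}$ if $-r\in\Z$ and $\{0\}$ otherwise, and $\tbo=\tbo^z+\tbo'$ accordingly, the key point is that the reductive group $H_x$ acts trivially on the central summand. An elementary orbit-closure argument then shows that $\tbo$ is semistable in $\hfg_x(-r)$---equivalently, $\b$ is fundamental, by Proposition~\ref{nilpotent}---if and only if $\tbo^z\ne 0$ or $\tbo'$ is semistable in $\widehat{[\fg,\fg]}_x(-r)$. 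Applying Proposition~\ref{nilpotent} to $[G,G]$ (whose reduced building and reductive quotient at the image of $x$ agree with those for $G$, up to the central factor on which everything acts trivially), semistability of $\tbo'$ is equivalent to $\tbo'$ being non-nilpotent in $[\fg,\fg]\otimes F$. Thus fundamentality of $\b$ splits into the cases $\tbo^z\ne 0$ and ($\tbo^z=0$ with $\tbo'$ non-nilpotent).

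In the first case I would produce the character: since $G/[G,G]$ is a torus with Lie algebra $\fz$, the differentials $d\chi$, $\chi\in X^*(G)$, span $\fz^*$, so some $\chi$ has $d\chi(\tbo^z)\ne 0$; as $d\chi$ annihilates $[\fg,\fg]$, the homogeneous representative of $\chi(\b)$ equals $d\chi(\tbo)=d\chi(\tbo^z)$, a nonzero---hence non-nilpotent---element of the one-dimensional space $\gl(W)_{\rho_*(x)}(-r)$, so $(\rho_*(x),r,\chi(\b))$ is fundamental and $(W_\sG,\n_W)$ has slope $r$. In the second case $\tbo=\tbo'$ is non-nilpotent in $[\fg,\fg]\otimes F$; since $\ad_{\fg\otimes F}(\tbo)$ acts as $\ad_{[\fg,\fg]\otimes F}(\tbo')$ on $[\fg,\fg]\otimes F$ and as $0$ on $\fz\otimes F$, it is a non-nilpotent endomorphism of $\fg\otimes F$, i.e. the homogeneous representative $\ad(\tbo)$ of $\ad(\b)$ at $\ad_*(x)$ is non-nilpotent, so $(\ad_*(x),r,\ad(\b))$ is fundamental and $(\fg_\sG,\n_\fg)$ has slope $r$.

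The delicate part, and where I would be most careful, is the meaning of ``nilpotent'': an element or functional of $\hfg$ is nilpotent here exactly when it is $\hG$-unstable, which for reductive $\fg$ means its central component vanishes \emph{and} its semisimple part is ad-nilpotent; in particular a nonzero central element is not nilpotent, although its adjoint action is trivial. This is precisely what forces the case split, and the torus case $\fg=\fz$---where $(\fg_\sG,\n_\fg)$ is trivial and only the character option can hold---is the instructive sanity check. Minor but worth stating: $\rho_*$ carries homogeneous representatives to homogeneous representatives, and for $\GL(\fg)$ the above notion of nilpotent reduces to ``nilpotent matrix'' since nilpotent matrices are traceless.
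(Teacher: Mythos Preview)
Your proof is correct and follows essentially the same approach as the paper: reduce to $x\in\Ao$, push the stratum forward via Corollary~\ref{rhostrat}, decompose the homogeneous representative as $\tbo=\tbo^z+\tbo'$ along $\fz\oplus[\fg,\fg]$, and use Proposition~\ref{nonbary} to read off the slope once the pushed-forward stratum is fundamental. The only difference is organizational: the paper argues contrapositively (assume $\slope(\fg_\sG,\n_\fg)<r$, deduce $\ad(\tbo)$ nilpotent, hence $\tbo'$ nilpotent, hence $\tbo^z\ne 0$, then produce the character), whereas you do a direct case split on whether $\tbo^z\ne 0$.
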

\begin{proof}

  Suppose that $\slope(\fg_{\sG},\n_\fg)<r$.  We will show that $G$
  has a character $(W,\chi)$ for which $(W_\sG,\n_W)$ has slope $r$.

  As usual, we can assume that $x\in\Ao$.  Since $(x,r,\b)$ is
  fundamental, Proposition~\ref{nilpotent} implies that the coset
  $\iota_\tau([\n]_{\phi})+\hfg_{x, -r+}$ contains no nilpotent
  elements.  (We omit the $\tx$ term, since $r>0$.)  Let
  $Y\in\hfg_x(-r)$ be the homogeneous coset representative. By
  corollary~\ref{rhostrat}, $(\fg_{\sG},\n_\fg)$ contains the
  corresponding $\GL(\fg)$-stratum $(\ad_*(x),r,\ad(Y)\ddz)$.
  Proposition~\ref{nonbary} implies that this stratum is not
  fundamental, so the graded representative $\ad(Y)$ is nilpotent.
  This means that $Y=Y_1+Y_2$ with $Y_1\in\hfz(-r)$ nonzero and $Y_2$
  a nilpotent element of $\fg_x(-r)\cap[\hfg,\hfg]$.  Note that this
  already implies that $r\in\Z_{>0}$, since homogeneous element of the
  center have integral degrees.

  Since the connected center $Z^0\cong G/[G,G]$ is a torus, there
  exists a character $(W,\chi)$ of $G$, vanishing on $[G,G]$ such that
  $\chi(Y_1)\in z^{-r}k^*$.  The corresponding stratum contained in
  $(W_\sG,\n_W)$ is $(\chi_*(x),r,\chi(Y_1)\ddz)$, which is evidently
  fundamental.
 \end{proof}

 Flat line bundles have integral slope, so we obtain the following
 corollary.

 \begin{cor} If $(\sG,\n)$ contains a fundamental stratum of
   nonintegral depth $r$, then $\slope(\fg_\sG,\n_\fg)=r$.
\end{cor}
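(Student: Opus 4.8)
The plan is to obtain the statement as an immediate consequence of Proposition~\ref{adjchar}. Since $(\sG,\n)$ contains a fundamental stratum of depth $r>0$, that proposition furnishes a dichotomy: either the adjoint bundle $(\fg_\sG,\n_\fg)$ has slope $r$, or $G$ admits a one-dimensional representation $W$ such that $(W_\sG,\n_W)$ has slope $r$. The entire argument then reduces to ruling out the second alternative under the hypothesis $r\notin\Z$.

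To exclude it, I would invoke the integrality of the slope of a flat line bundle. Concretely, if $W$ is one-dimensional, then $(W_\sG,\n_W)$ is a rank one flat vector bundle, so with respect to any trivialization its matrix is a scalar form $g\ddz$ with $g\in F$. Writing a general element of $F^\times$ as $h=z^m u$ with $u$ a unit, the gauge change by $h$ replaces $g$ by $g-\iota_\tau\big((dh)h^{-1}\big)=g-m-z\tfrac{u'}{u}$, i.e. it alters $g$ only by an integer plus an element of $z\fo\subset\fo$. Hence the coefficients of $z^{-k}$ for $k\ge 2$ are gauge invariants, and one reads off $\slope(W_\sG,\n_W)=\max\big(0,-\mathrm{ord}_z(g)-1\big)\in\Z_{\ge 0}$ (using the conventions of Section~\ref{flatvb}). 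This contradicts $\slope(W_\sG,\n_W)=r\notin\Z$, so only the first alternative of Proposition~\ref{adjchar} can hold, giving $\slope(\fg_\sG,\n_\fg)=r$.

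I do not expect any genuine obstacle here: the substantive work is already packaged in Proposition~\ref{adjchar}, which in turn rests on Propositions~\ref{nonbary} and~\ref{nilpotent}, and this corollary is simply the remark that nonintegrality of $r$ forces the adjoint bundle to be the one that realizes the slope. The only point that warrants a little care is the normalization of ``slope'' for rank one bundles, so that the elementary gauge computation above genuinely produces an integer; this is routine.
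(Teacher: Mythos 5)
Your proof follows exactly the paper's route: the paper simply observes ``Flat line bundles have integral slope'' before stating the corollary, and you spell out that observation and feed it into the dichotomy of Proposition~\ref{adjchar}. One small slip: the slope of the rank-one bundle $d+g\ddz$ is $\max(0,-\mathrm{ord}_z(g))$ rather than $\max(0,-\mathrm{ord}_z(g)-1)$ (indeed, all coefficients of $z^{-k}$ for $k\ge 1$ are gauge-invariant, not just $k\ge 2$), but this off-by-one does not affect the only fact you need, namely that the slope is an integer.
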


\subsection{Proofs of the main theorems}\label{proofs}

Recall that $\Psi_C$ denotes the set of optimal points in a given
closed chamber $\bar{C}\subset\Ao$.

\begin{lem}\label{unstable}  Suppose that $(\sG, \n)$ contains a stratum $(x,
  r, \b)$ of depth $r > 0$
\begin{enumerate}
\item If $r$ is not a critical number for $x$, then $(\sG, \n)$
  contains a stratum of the form $(x, s, \b')$ where $s < r$ is a
  critical number.
\item If $r$ is a critical number and $(x, r, \b)$ is not fundamental,
  then $(\sG, \n)$ contains a stratum $(y, s, \b')$ with $y\in\Psi_C$
  and $s < r$.
\end{enumerate}
\end{lem}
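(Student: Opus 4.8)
The plan is to handle the two cases separately, in each case producing the desired stratum by an explicit change of trivialization that improves the depth. For part (1), suppose $(\sG,\n)$ contains $(x,r,\b)$ but $r\notin\Crit_x$. I would first note, via Proposition~\ref{equivariance}, that we may assume $x\in\Ao$, so by Proposition~\ref{strataprop} we have $[\n]_\phi-\tx\ddz\in\hfg^\perp_{x,r+}$; equivalently $\iota_\tau([\n]_\phi)-\tx\in\hfg_{x,-r}$. Since $r\notin\Crit_x$, also $-r\notin\Crit_x$, so $\hfg_{x,-r}=\hfg_{x,(-r)+}=\hfg_{x,-s}$ for the largest critical number $s<r$. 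Thus $[\n]_\phi-\tx\ddz$ already lies in $\hfg^\vee_{x,-s}$, and the coset it determines in $\hfg^\vee_{x,-s}/\hfg^\vee_{x,(-s)+}$ gives a functional $\b'$ on $\hfg_{x,s}/\hfg_{x,s+}$; then $(x,s,\b')$ is contained in $(\sG,\n)$ with respect to the same $\phi$ by Proposition~\ref{strataprop} again (note $s$ could a priori be $0$, which is consistent with the statement $s<r$). So part (1) is essentially bookkeeping with the filtration.

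For part (2), suppose $r\in\Crit_x$ but $(x,r,\b)$ is not fundamental. Again reduce to $x\in\Ao$. By Proposition~\ref{nilpotent}, the homogeneous representative $Y=\iota_\tau(X_{\tbo})\in\hfg_x(-r)$ is nilpotent, so there is a one-parameter subgroup $\g:k^*\to H_x$ with $\lim_{t\to0}\Ad(\g(t))Y=0$; extending scalars gives $\tg:F^*\to\hG$ with $\tg$ lying in a torus, and the associated cocharacter $\mu$ determines a ray in $\Ao$ along which $Y$ has strictly positive weight. The idea is to gauge by $\tg(t)$ (more precisely, by $z^{c\mu}$ for a suitable small $c>0$, or by $\theta_x^{-1}\circ\g$ lifted appropriately) so as to move the base point from $x$ toward the facet where $\mu$ pairs positively with $Y$, and to use Lemma~\ref{actlem}\eqref{act1} (together with Lemma~\ref{bnorm}) to track the effect on $[\n]_\phi-\tx\ddz$. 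The nilpotency of $Y$ forces the homogeneous term to be killed in the limit, which should drop the depth below $r$; the new base point $x'$ can then be taken inside the closed alcove $\bar C$, and by the maximality property defining $\Opt_C$ one can further replace $x'$ by an optimal point $y\in\Psi_C$ with depth $\le$ the depth at $x'$, hence still $<r$. The key input is that along the ray determined by $\mu$, the contraction $\lim_{t\to0}\Ad^*(\tg(t))(X_{\tbo})=0$ combined with the filtration-shift built into Lemma~\ref{actlem}\eqref{act2} shows $[\n]$ acquires a representative of strictly smaller depth at the new point.

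The main obstacle I anticipate is the second case: making the gauge transformation genuinely decrease the depth rather than merely preserving it, and controlling that the new base point lands in the same closed alcove so that one may invoke the optimal-point construction. Concretely, after gauging by $z^{c\mu}$ one must check that the lower-order terms $[\n]_\phi-\tbo$, which lie in $\hfg^\vee_{x,-r+}$, do not, under $\Ad^*(z^{c\mu})$, spill back out to depth $\ge r$ at the new base point; this requires a careful comparison of the filtrations at $x$ and at $x+c\mu$, using that $\mu$ pairs nonnegatively with every affine root that is nonnegative on the relevant segment. One also needs the Maurer--Cartan correction $-(dz^{c\mu})z^{-c\mu}\in\hft_{0+}\ddz$ from Lemma~\ref{actlem}\eqref{act1}, which only contributes to the torus part and is harmless. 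Once the depth is shown to drop to some critical number $s<r$ at a point of $\bar C$, passing to $\Psi_C$ is immediate from the definition of optimal points and the fact (noted after that definition) that $\Psi_C$ contains the vertices, so one can always find an optimal point dominating the given one in the sense needed.
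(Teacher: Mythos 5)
Your part (1) is correct and does the same bookkeeping as the paper, just phrased a bit more directly: you observe that $\hfg^\vee_{x,-r}=\hfg^\vee_{x,-s}$ outright because the dual filtration has no jump in $[-r,-s)$, whereas the paper decomposes $[\n]_\phi$ into $\ft$- and root-space components and argues by contradiction with the minimality of the degree of a bad component. Both arguments are sound and give the same conclusion via Proposition~\ref{strataprop}. (One small point of care: after concluding $[\n]_\phi-\tx\ddz\in\hfg^\vee_{x,-s}$, you should note $s\ge 0$ automatically since $0\in\Crit_x$, so $(x,s,\b')$ is a legitimate stratum.)

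Part (2) is where your proposal does not go through as written, and indeed you flag the gap yourself. The destabilizing cocharacter for the unstable coset $\tbo$ and the idea of sliding the base point in the contracting direction are the right intuitions, but the claim that this genuinely lowers the depth at some optimal point of $\bar{C}$ is not a computation one does with an explicit gauge element $z^{c\mu}$: for non-integral $c$ that is not an element of $\hG$, the translated point need not land in $\bar{C}$, and the comparison of the filtrations at $x$ and at the translate is precisely the hard affine-root combinatorics. The paper does not redo this; it invokes \cite[Proposition 6.3]{MP1}, which is exactly the statement you need: if $\tb+\hfg^\vee_{x,-r+}$ is an unstable coset, then there exist $p\in\hG_x$ and $y\in\Opt_C$ such that $\Ad^*(p)(\tb)+\hfg^\vee_{x,-r+}\subseteq \hfg^\vee_{y,-s}$ for some $s<r$. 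That is the Moy--Prasad refinement of the Kempf--Rousseau optimal destabilizing one-parameter subgroup, and it is a theorem, not a formality — it is the content you labeled your ``main obstacle.'' Once it is granted, the rest is short: Lemma~\ref{actlem}\eqref{act3} converts $\Ad^*(p)$ into a change of trivialization $\phi\mapsto p\phi$, the harmless translation $(\tx-\ty)\ddz\in\hfg^\vee_{y,0}$ lets one replace $\tx\ddz$ by $\ty\ddz$, and Proposition~\ref{strataprop} yields the stratum $(y,s,\b')$ contained in $(\sG,\n)$ with respect to $p\phi$. Note also that your final step — ``pass to an optimal point dominating the given one'' — is not needed: Proposition 6.3 already delivers $y\in\Opt_C$ directly, which is fortunate since it is not clear a priori that the depth at an arbitrary point of $\bar{C}$ is dominated by the depth at a nearby optimal point.
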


\begin{proof}

  By Lemma~\ref{strataequivariance}, we may assume without loss of
  generality that $x\in\bar{C}$.  First, suppose that $(\sG, \n)$
  contains $(x, r, \b)$ with respect to the trivialization $\phi$ and
  that $r\notin\Crit_x$.  Let $s$ be the greatest critical number for
  $x$ less than $r$.  We claim that $[\n]_\phi \in \hfg^\vee_{x, -s}$.
  Write $[\n]_\phi = (Y_0+\sum_{\a \in \Phi} Y_\a) \ddz$, where
  $Y_0\in\hft$ and $Y_\a\in\hfu_\a$. If $[\n]_\phi \notin \hfg^\vee_{x,
    -s}$, then $Y_\g \ddz \notin \hfg^\vee_{x, -s}$ for some $\g \in
  \Phi\cup\{0\}$.  Since $Y_\g \ddz \in \hfg^\vee_{x, -r}$, we may
  take $s' = \min\{t \in (s, r] \mid Y_\g \ddz \in \hfg^\vee_{x,
    -t}\}>s$.  However, since $Y_\g \ddz$ has nonzero image in
  $\hfg^\vee_{x, -s'}/\hfg^\vee_{x, -s'+}$, $s'$ is a critical point
  with $s'<r$.  This contradicts the assumption that $s$ is the
  greatest critical point less than $r$.  We now apply
  Proposition~\ref{strataprop} to see that $(\sG,\n)$ contains the
  stratum $(x,s,\b')$ with respect to $\phi$, where $\b'$ is induced
  by $[\n]_\phi-\tx\ddz$.

  We now assume that $r$ is a critical number for $x$ and $(x,r,\b)$
  is not fundamental.  By \cite[Proposition 6.3]{MP1}, there exists $p
  \in \hG_x$ and $y \in \Opt_C$ such that for some $s < r$, $\Ad^*(p)
  ([\n]_\phi - \tx \ddz) + \hfg^\vee_{x, -r+} \subset \hfg^\vee_{y,
    -s}$.  Lemma~\ref{actlem}~\eqref{act3} implies that
  $\Ad^*(p)([\n]_\phi - \tx \ddz) + \hfg^\vee_{x, -r+} = [\n]_{p \phi}
  - \tx \ddz + \hfg^\vee_{x, -r+}$.  Since $[\n]_{p \phi} - \tx \ddz
  \in \hfg^\vee_{y, -s}$ and $(\tx -\ty)\ddz \in \hfg^\vee_{y, 0}$,
  $[\n]_{p \phi} - \ty \ddz \in \hfg^\vee_{y, -s}$.  Letting $\b'$ be
  the functional induced by $[\n]_{p \phi} - \ty \ddz$, it follows
  that $(\sG, \n)$ contains the stratum $(y, s, \b')$ with respect to
  the trivialization $p \phi$.
\end{proof}

\begin{prop} \label{minr} Suppose that $(\sG, \n)$ contains a
  fundamental stratum $(x, r, \b)$ of depth $r > 0$.  If $(y,s,\b')$
  is another stratum contained in $(\sG, \n)$, then $s\ge r$.
  Moreover, if $s=r$, this stratum is fundamental.
\end{prop}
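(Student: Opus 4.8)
The plan is to reduce the statement to facts already established about the associated flat vector bundles. For the first assertion, observe that since $(\sG,\n)$ contains the fundamental stratum $(x,r,\b)$ of depth $r>0$, Proposition~\ref{adjchar} furnishes a representation $(V,\rho)$ of $G$ — either the adjoint representation or a one-dimensional representation — for which the associated flat vector bundle $(V_\sG,\n_V)$ has slope exactly $r$. On the other hand, Corollary~\ref{upperbound} (which itself rests on Corollary~\ref{rhostrat} and Proposition~\ref{nonbary}) asserts that $\slope(V_\sG,\n_V)$ is no greater than the minimum depth of a stratum contained in $(\sG,\n)$. Combining these gives $r \le s$ for every stratum $(y,s,\b')$ contained in $(\sG,\n)$, which is the first assertion.

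For the second assertion, suppose $s=r$ and, aiming for a contradiction, that the stratum $(y,r,\b')$ is \emph{not} fundamental. If $r$ is not a critical number for $y$ — one way the stratum can fail to be fundamental, by the remark following Definition~\ref{fund} — then the first part of Lemma~\ref{unstable} produces a stratum of depth $s'<r$ contained in $(\sG,\n)$. If instead $r$ is a critical number for $y$, then since $(y,r,\b')$ is nonfundamental the second part of Lemma~\ref{unstable} again produces a stratum of depth $s'<r$ contained in $(\sG,\n)$. In either case this contradicts the first assertion, so $(y,r,\b')$ must be fundamental.

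I do not anticipate a serious obstacle, since the argument is essentially an assembly of the preceding results; the only point requiring care is the dichotomy in the second paragraph, where one must invoke the correct part of Lemma~\ref{unstable} according to whether $r$ is a critical number at $y$, noting that in the non-critical case nonfundamentality holds automatically.
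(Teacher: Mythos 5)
Your argument for the second assertion (that $s = r$ forces fundamentality) is correct and is precisely the paper's argument: invoke Lemma~\ref{unstable}, in whichever of its two cases applies, to manufacture a stratum of strictly smaller depth, contradicting the first assertion.

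Your argument for the first assertion ($s \ge r$) is circular as written. You route through Proposition~\ref{adjchar} and Corollary~\ref{upperbound}, both of which rest on Proposition~\ref{nonbary}. But the proof of Proposition~\ref{nonbary} (see Section~\ref{nonbaryproof} of the appendix) concludes by citing Corollary~\ref{vbgl} together with \emph{the $\GL_n$ case of Proposition~\ref{minr} itself}, exactly to rule out nonfundamental strata of positive depth $\le \slope$. And that is the very ingredient you need when you apply Corollary~\ref{upperbound}: after passing to a stratum of depth $s$ in $(V_\sG,\n_V)$ via Corollary~\ref{rhostrat}, you must know that $s \ge \slope(V_\sG,\n_V)$ even when that stratum is nonfundamental, which is precisely what the paper derives from Proposition~\ref{minr}. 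So the dependency chain closes on itself: Proposition~\ref{minr} $\Rightarrow$ Corollary~\ref{upperbound}/Proposition~\ref{adjchar} $\Rightarrow$ Proposition~\ref{nonbary} $\Rightarrow$ Proposition~\ref{minr} for $\GL_n$.

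The paper's proof avoids any appeal to the vector-bundle machinery: after conjugating so that both strata are contained with respect to a single trivialization and both base points lie in $\Ao$, it supposes $s<r$ and invokes the Moy--Prasad inequality of \cite[Proposition~6.4]{MP1} to get $\hfg^\vee_{y,-s}\cap(\tb+\hfg^\vee_{x,-r+})=\emptyset$, and then uses Proposition~\ref{strataprop} to exhibit $[\n]_\phi-\tx\ddz$ (modulo $\hfg^\vee_{x,-r+}$) as an element of this intersection, a contradiction. Your approach would only become valid if the $\GL_n$ case of the proposition were first established independently, say by the paper's direct argument; as presented it is not self-contained.
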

\begin{proof} Recall that any two points in the building lie in a
  common apartment.  Hence, by equivariance, we may assume without
  loss of generality that the two strata are contained in $(\sG, \n)$
  with respect to the same trivialization and that $x,y\in\Ao$.

Suppose that $s<r$.  We can apply \cite[Proposition~6.4]{MP1} to
obtain
$\hfg^\vee_{y, -s} \cap (\tb + \hfg^\vee_{x, -r+})= \emptyset$.  By
Proposition~\ref{strataprop}, there exists $\omega\in\hfg^\vee_{x, -r+}$
and $\omega'\in\hfg^\vee_{y, -s+}$ such that $\tb=[\n]_\phi-\tx\ddz+\omega$
and $\tb'=[\n]_\phi-\ty\ddz+\omega'$.  Since
$(\ty-\tx)\ddz\in\hfg^\vee_{y, 0}$, it follows that
$\tb'-\omega'+(\ty-\tx)\ddz=\tb-\omega\in\hfg^\vee_{y, -s} \cap (\tb +
\hfg^\vee_{x, -r+})$. This contradiction implies that $s \ge
r$.

Now, suppose that $s = r$.  If $(y, r, \b')$ is not fundamental, then
Lemma~\ref{unstable} states that $(\sG, \n)$ contains a stratum
of depth $s'$ strictly less than $r$.  This contradicts the conclusion
of the previous paragraph.

\end{proof}

\begin{cor}\label{minrcor} All fundamental strata contained in $(\sG,\n)$ have the
  same depth.
\end{cor}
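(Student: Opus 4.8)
The plan is to deduce this immediately from Proposition~\ref{minr} together with the regular-singular criteria established in Propositions~\ref{regsing} and~\ref{adjchar}. Suppose $(\sG,\n)$ contains two fundamental strata $(x,r,\b)$ and $(y,s,\b')$; the goal is to show $r=s$, and I would split into cases according to whether the depths are positive.

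First I would treat the case $r>0$ and $s>0$, which is the main one. Here I apply Proposition~\ref{minr} twice. Taking $(x,r,\b)$ as the fundamental stratum of positive depth and $(y,s,\b')$ as ``another stratum contained in $(\sG,\n)$'' yields $s\ge r$; interchanging the roles of the two strata yields $r\ge s$. Hence $r=s$. (In fact Proposition~\ref{minr} also re-confirms that the second stratum is fundamental, but that is not needed here.)

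It remains to rule out the mixed situation in which one depth, say $r$, equals $0$ while the other, $s$, is positive. Since $(\sG,\n)$ then contains the depth-$0$ stratum $(x,0,\b)$, Proposition~\ref{regsing} shows that $(\sG,\n)$ is regular singular, so every associated flat vector bundle $(V_\sG,\n_V)$ is regular singular and hence has slope $0$. On the other hand, $(y,s,\b')$ is a fundamental stratum of positive depth, so Proposition~\ref{adjchar} produces an associated flat vector bundle --- coming either from the adjoint representation or from a character --- of slope $s>0$, contradicting regular singularity. Thus no such mixed pair can occur, so if one fundamental stratum has depth $0$ then so does every fundamental stratum; and two fundamental strata of depth $0$ trivially agree. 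Combining the cases gives the result.

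I do not anticipate a real obstacle: the positive-depth case is a purely symmetric application of Proposition~\ref{minr}, and the only mildly delicate point, the mixed case, is already handled by the interplay of Propositions~\ref{regsing} and~\ref{adjchar}. The corollary is essentially a bookkeeping consequence of the preceding results.
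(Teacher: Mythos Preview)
Your proof is correct, but it takes a slightly longer route than the paper's. The paper argues by contradiction: if $(\sG,\n)$ contains fundamental strata of distinct depths $r>s\ge 0$, then $r>0$, so Proposition~\ref{minr} applied with the $(x,r,\b)$ stratum as the fundamental one of positive depth forces $s\ge r$, a contradiction. This single application of Proposition~\ref{minr} handles all cases at once, including your ``mixed'' case $r=0$, $s>0$: simply apply Proposition~\ref{minr} to the stratum of positive depth $s$ and conclude $0\ge s$. Your detour through Propositions~\ref{regsing} and~\ref{adjchar} is valid but unnecessary here; those results are instead used later in the proof of Theorem~\ref{MP} to establish that the slope detects irregular singularity.
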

\begin{proof} Suppose $(\sG,\n)$ contains two fundamental strata with
  different depths $r$ and $s$, say with $r>s$.  Then $r>0$, so the
  proposition gives the contradiction $s\ge r$.
\end{proof}

\begin{proof}[Proof of Theorem~\ref{MP}]
  We only need to consider $\hG$-strata in the proof as the statements
  about $\sG$-strata are immediate consequences.  We first prove
  that every flat $G$-bundle $(\sG, \n)$ contains a fundamental
  stratum (resp. a stratum of depth $0$) if it is irregular singular
  (resp. regular singular).  Fix a chamber $C\subset\Ao$, and let
  $\Opt_C$ be the set of optimal points in $\bar{C}$.  Consider the
  set of positive real numbers $s$ such that $(\sG, \n)$ contains a
  stratum $(x,s,\b)$ with $x\in\Opt_C$.  This set is clearly nonempty,
  as by Remark~\ref{stratarmk}, any choice of trivialization and
  optimal point determines a stratum for $(\sG, \n)$.  Since $\Opt_C$
  is finite, it attains its lower bound $r>0$.

  Suppose that $(x,r, \b)$ is a stratum contained in $(\sG, \n)$ such
  that $x \in \Opt_C$ and $r$ is the lower bound described above.  We
  write $\phi$ for the associated trivialization of $\sG$.  Assume
  that this stratum is not fundamental.  Note that
  Proposition~\ref{adjchar} guarantees that this is the case when
  $(\sG,\n)$ is regular singular.  Lemma~\ref{unstable} implies that
  $(\sG,\n)$ contains a stratum of the form $(y, s, \b')$, where
  $y\in\Opt_C$ and $s<r$.  (If $r$ is not a critical point, one can
  take $y=x$.)  By minimality of $r$, we must have $s=0$, and it
  follows from Proposition~\ref{regsing} that $(\sG,\n)$ is regular
  singular.  Thus, $(x,r,\b)$ is fundamental if $(\sG,\n)$ irregular
  singular, and $(\sG,\n)$ contains a stratum of depth $0$ if it is
  regular singular.  Statements~\eqref{MP1} and ~\eqref{MP3} now
  follow immediately from Proposition~\ref{minr} and
  Corollary~\ref{minrcor}.

  Next, suppose that $r = 0$.  It remains to show that $(\sG, \n)$
  contains a fundamental stratum.  We have shown that $(\sG, \n)$
  contains a stratum of the form $(x, 0, \b)$ with respect to a
  trivialization $\phi$, and by Proposition~\ref{regsing} we may
  assume that $x$ is in a minimal facet $\Delta\subset\Ao$.  If this
  stratum is nonfundamental, then, using the notation of
  Proposition~\ref{r0stratum}, $d\theta_x^* (\b) \in \fh^\vee_x$ is
  unstable.  In particular, there exists a Borel subgroup $B_\b
  \subset H_x$ such that $d\theta_x^*(\b) \in (\fb_\b)^\perp$. Choose
  $h \in H_x$ such that $hB_\b h^{-1}$ is the standard Borel subgroup
  $B_x \subset H_x$ containing $T$ coming from a choice of positive
  roots for $G$, and let $m \in G_x$ be a lift of $\theta_x(h)$.  By
  part \eqref{act3} of Lemma~\ref{actlem}, $(\sG, \n)$ contains the
  stratum $m \cdot (x, 0, \b)$.  Thus, we may assume without loss of
  generality that the stratum $(x,0,\b)$ satisfies $d \theta_x^*(\b)
  \in (\fb_x)^\perp$, where $\fb_x = \Lie(B_x)$.

  Let $\de \in \ft$ be the element corresponding to the half sum of
  positive coroots of $H_x$.  We define $x_\e \in \Ao$ via $\tx_\e=\tx
  + \e \de$.  For small $\e > 0$, we will show that $\tbo +
  \hfg^\vee_{x+} \subset \hfg^\vee_{x_\e+}$.  (Recall that $\tbo$ is
  the homogeneous representative for $\b$.)  First, since $x$ is in a
  minimal facet, $x + \e \de$ must be contained in a chamber $C$ with
  $x \in \bar{C}$ for $\e$ sufficiently small. (Note that $x \notin
  C$).  Therefore, $\hfg_{x_\e} \subset \hfg_{x}$, so taking
  annihilators gives $\hfg^\vee_{x+} \subset \hfg^\vee_{x_\e+}$. An
  elementary calculation, using the fact that $\ad^* (\e \de)$ has
  strictly positive eigenvalues on $\fb_x^\perp$, shows that $(d
  \theta_x^*)^{-1} (\fb_x^\perp) \subset \hfg^\vee_{x_\e+} +
  \hfg^\vee_{x+}$.  We conclude that $\tbo \in \hfg_{x_\e+}$ since $d
  \theta_x^*(\b) \in (\fb)^\perp$.  This proves the assertion above.

Write $\tb_\e = \tbo - \e \de \ddz \in \hfg_{x_\e}^\vee$ and let
$\b_\e$ be the corresponding element of $(\hfg_{x_\e}/
\hfg_{x_\e+})^\vee$.  We deduce that $(\sG, \n)$ contains the stratum
$(x_\e, 0, \b_\e)$ with respect to the trivialization $\phi$. By
Proposition~\ref{regsing}, $(\sG, \n)$ contains the stratum $(y, 0,
\b^y)$for any $y\in\bar{C}$, where $\b^y$ is determined by
$[\n]_{m\phi} - \ty \ddz$.

Finally, since $[\n]_{\phi} - \tx \ddz \in \tbo + \hfg^\vee_{x+}
\subset \hfg^\vee_{x_\e+}$, it follows that $[\n]_{\phi} - \ty \ddz +
\hfg^\vee_{x_\e+} \in (\tx - \ty) \ddz + \hfg^\vee_{x_\e+}$; as
$(\tx-\ty)\in\hfg^\vee(0)$, the proof of Proposition~\ref{nilpotent}
shows that this coset is semistable if $\ty\ne\tx$.  Therefore,
$[\n]_{\phi} - \ty \ddz + \hfg^\vee_{y, 0+}$ has the same property, so
$(y,0, \b')$ is fundamental.  In particular, this is true for any
optimal point in $\bar{C}$ besides $x$, for example, any other vertex
of $\bar{C}\cap\bB$.

\end{proof}

\begin{rmk} If $(\sG,\n)$ contains a nonfundamental stratum $(x,0,\b)$
  with $x$ in a minimal facet, then the proof above, replacing $\de$
  with $w\de$ for $w$ in the Weyl group of $H_x$, gives a construction
  of a fundamental stratum contained in $(\sG,\n)$ at every $y\ne x$
  in the closed star of $x$ in $\Ao$.  Moreover, this stratum induces
  a fundamental stratum in $\fg_\sG$ as long as $y$ is not in the same facet
  as $x$.
\end{rmk}

We now turn to the proof of Theorem~\ref{associateness}, which states
that strata of the same depth contained in a flat $G$-bundle are
associates of each other.  First, we supply the proof of
Proposition~\ref{deltaprop}, which was needed for
Definition~\ref{assdef}.

\begin{proof}[Proof of Proposition~\ref{deltaprop}]
Choose $g\in\hG$ such that $gx,gy\in\Ao$.
(For example, let $\A$ be an apartment containing $x$ and $y$, and
take $g$ such that $g \A = \Ao$.)  We now set
$\de_{x,y} = \Ad^*(g^{-1}) ((\widetilde{gy} - \widetilde{gx}) \ddz)$.
This, of course, depends on the choice of $g$, but the defining
property will be satisfied independently of the choice. 

In order to show~\eqref{delta}, one may easily reduce to the special
case of $x,y\in\Ao$ and $g \in \hG_x \cap \hG_y$. Here, we will set
$\de_{x, y} = (\tx-\ty)\ddz$.  In other words, we must show that
whenever $g \in \hG_x \cap \hG_y$,
\begin{equation}\label{assline}
\Ad^*(g)(\de_{x, y}) \in \de_{x, y} + \hfg^\vee_{x, 0+}+ \hfg^\vee_{y,0+}.
\end{equation}
Using the notation introduced after Lemma~\ref{strataequivariance}, observe
that $\de_{x, y} = \dAo{y} - \dAo{x}$.  However, by \eqref{dA}, $g
\dAo{x} g^{-1} \in \dAo{x} + \hfg^\vee_{x, 0+}$ and $g \dAo{y} g^{-1}
\in \dAo{y} + \hfg^\vee_{y, 0+}$.  Subtracting, we obtain
\eqref{assline}.

\end{proof}

\begin{proof}[Proof of Theorem~\ref{associateness}]
  Since conjugate strata are associates of each other, we may assume
  without loss of generality that $\phi'=\phi$.  Applying
  Lemma~\ref{strataequivariance} and the fact that we can find
  $h\in\hG$ such that $hx,hy\in \Ao$, we may further assume that
  $x,y\in\Ao$.

  We now verify~\eqref{asseq} in this situation with $g=1$.  By
  Proposition~\ref{strataprop}, $\tb \in [\n]_\phi - \tx \ddz +
  \hfg^\vee_{x, -r+}$ and $\tb' \in [\n]_\phi - \ty \ddz +
  \hfg^\vee_{y, -r+}$.  Taking $\de_{x, y} = (\tx-\ty)\ddz$, it is
  immediate that $[\n]_\phi - \tx \ddz$ lies in the intersection
  $\left(\tb + \hfg^\vee_{x, -r+}\right) \cap \left(\tb' - \de_{ x, y}
    + \hfg^\vee_{y, -r+}\right)$.

\end{proof}

\begin{proof}[Proof of Theorem~\ref{slopethm}] By
  Corollary~\ref{upperbound}, $\slope(V_\sG,\n_V)$ is at most the
  minimum of the depths of the strata contained in $(\sG,\n)$.  The
  equivalence of the four characterizations and the positivity
  statement follow from Theorem~\ref{MP} and
  Proposition~\ref{adjchar}.  The slope is rational, since slopes of
  flat vector bundles are rational.
\end{proof}

Finally, we turn to the proof of Proposition~\ref{FGslope}.  Let $E$ is
a degree $e$ field extension of $F$, and fix a generator $u_E$
satisfying $u_E^e=z$.  We let $\pi_E : \Spec (E) \to \Spec(F)$ be the
associated map of spectra.  We let $\B(E)$ be the building for $G(E)$
and denote the apartment corresponding to $T(E)$ by $\Ao(E)$.  The
pullback of the flat $G$-bundle $(\sG,\n)$ to $\Spec(E)$ will be
denoted by $(\pi_E^*\sG, \pi_E^*\n)$.  We will suppress the subscripts
when the field $E$ is clear from context.

\begin{proof}[Proof of Lemma~\ref{slopepullback}]
  Set $e=[E:F]$.  Take $x\in\Ao$, and let $ex\in\Ao(E)$ be the point
  corresponding to $e\tx$.  If $V$ is any representation of $G$, it is
  easily checked that $\hV_{x}(r)=V(E)_{ex}(er)\cap \hV$ and
  $\hV_{x,r}=V(E)_{ex,er}\cap \hV$.  Indeed, it suffices to check the
  statement about gradings.  Setting $\tau_E=u\frac{d}{du}=e\tau$,
  Proposition~\ref{ev}\eqref{ev1} implies that $V(E)_{ex}(er)\cap\hV$
  consists of those elements $v\in\hV$ such that
  $erv=(\tau_E+\widetilde{ex})(v)=e(\tau+\tx)(v)$, which is precisely
  $\hV_x(r)$.  We can now define a pullback map on strata based at
  points in $\Ao$: $\pi^*(x, r, \b) = (e x, e r, \b')$; here, $\b'$ is
  the functional on $\fg(E)_{e x, e r} / \fg(E)_{e x, e r+}$
  determined by the representative $e\tb$ coset in $\hfg^\vee_{x, -r}
  \subset \fg(E)^\vee_{e x, -e r}$.  Moreover, since $\b$ and $\b'$
  have graded representatives differing by a factor of $e$,
  Remark~\ref{fundrmk} implies that $\pi^*(x, r, \b)$ is fundamental
  if and only if $(x, r, \b)$ is.

  Since $(\sG,\n)$ has slope $r$, there exists a fundamental stratum
  $(x,r,\b)$ contained in $(\sG,\n)$ with respect to a trivialization
  $\phi$.  By equivariance, we may assume that $x\in\Ao$.  Applying
  Proposition~\ref{strataprop}, the functional $\b$ is determined by
  $[\n]_\phi-\tx\ddz\in\hfg^\vee_{x,-r}\subset\fg(E)^\vee_{ex,-er}$.
  However, $e[\pi^*\n]_\phi-\widetilde{ex}\ddu=[\n]_\phi-\tx\ddz$, so
  the same proposition shows that $(\pi^*\sG, \pi^*\n)$ contains the
  fundamental stratum $(ex,er,\b')$.  It follows that the pullback
  bundle has slope $er$.
\end{proof}

\begin{proof}[Proof of Proposition~\ref{FGslope}] Set $e=[E:F]$.  The
  condition involving \eqref{stdform} is equivalent to the statement
  that $(\pi^*\sG, \pi^*\n)$ contains a fundamental stratum $(o, n,
  \b)$ based at the origin of $\Ao(E)\subset\B(E)$.  It must then have
  slope $n=re$.  By Lemma~\ref{slopepullback}, its slope is also equal
  to $e\slope(\sG,\n)$, so the original flat $G$-bundle has slope $r$.
  On the other hand, by \cite[Theorem 9.5]{BaVa1}, there exists an
  algebraic field extension $E/F$ and a trivialization $\phi$ for
  $\pi^*\sG$ such that $(\pi^*[\n])_\phi$ has nonnilpotent leading
  term with respect to powers of $u$, say in degree $-n$.  Again, this
  means that the pullback bundle has slope $n$.  If the original
  bundle has slope $r$, then the same lemma implies that $r=n/e$ as
  desired.
\end{proof}

\section{Examples}

In this section, we provide some examples to illustrate the theory.
In each example, we write down the matrix for the flat $G$-bundle
$(\sG,\n)$ with respect to a fixed trivialization $\phi$, which will be
omitted from the notation.

\begin{exam} Let $m$ be a nonnegative integer, and set
  $[\n]=(\sum_{i\ge-m} X_i z^i)\ddz$, where $X_i\in\fg$ and $X_{-m}\ne
  0$.  Then, $(\sG,\n)$ contains the stratum $(o,m,\b^o)$, where
  $o\in\Ao$ is the origin and $\b^o$ is induced by $X_m\ddz$, so
  $\slope(\n)\le m$.  If $m>0$, then this stratum is fundamental at
  the origin in $\Ao$ if and only if $X_{-m}$ is not nilpotent, in
  which case, $\n$ has slope $m$.  If we assume that $X_{-m}$ is
  contained in a Borel subalgebra $\fb\supset\ft$ (which we can
  accomplish by a constant change of gauge), then $(x,m,\b^x)$ is
  contained in $(\sG,\n)$ for all $x\in\Ao$ if and only if
  $X_{-m}\in\ft$.  If $m=0$, $\slope(\n)=0$ for any $X_0$ while the
  stratum at $o$ is fundamental if and only if $X_0$ is nonnilpotent.
  If we assume that $X_0\in\fb$, then $(x,m,\b^x)$ is fundamental for
  all $x\in\Ao$ precisely when $X_0\in\ft-\ft_\R$.
\end{exam}

\begin{exam}\label{Cox} Suppose that $\fg$ has connected Dynkin diagram.  Let
  $\a_1,\dots,\a_n$ be a set of simple roots, and let $\a_0$ be the
  highest root.  Let $y_{-i}\in\fu_{-\a_i}$, $y_{\a_0}\in\fu_{\a_0}$
  be a collection of nonzero root vectors, and set
  $X=(z^{-1}y_{\a_0}+\sum_{i=1}^n y_{-i})$.  Fix $m\in\Z_{\ge 0}$, and
  let $[\n]=Xz^{-m}\ddz$.  As explained in the previous example, this
  flat $G$-bundle contains the stratum $(o,m+1,\b^o)$, but the stratum
  is not fundamental, since it is induced by the nilpotent element
  $z^{-m-1} y_{\a_0}$.  However, one readily checks that
  $X\in\hfg_{x}(-1/h)$, where $h$ is the Coxeter number and $x$ is any
  $\fz_\R$-translate of the barycenter corresponding to the standard
  Iwahori subgroup.  Since $X$ is regular semisimple, it follows that
  $(\sG,\n)$ contains a fundamental stratum based at $x$ of depth
  $m+\frac{1}{h}$, which is accordingly the slope.  Note that when
  $m=0$, this is the rigid flat $G$-bundle described by Katz for
  $\GL_n$ and Frenkel-Gross in general~\cite{FrGr}.

  In fact, no other points in $\Ao$ support a fundamental stratum
  contained in $\n$ with respect to $\phi$.  To see this, note that
  $X\in\hfg_{y,-1/h}$ implies that $\a_i(y)\le \frac{1}{h}$ for $1\le
  i\le n$ and $\a_0(y)\ge \frac{h-1}{h}$.  Using the fact that $\a_0$
  has height $h-1$ and adding the first inequalities appropriately, we
  get $\a_0(y)\le \frac{h-1}{h}$, so $\a_0(y)=\frac{h-1}{h}$.  This
  immediately gives $\a_i(y)=\frac{1}{h}$, so $y$ lies over the vertex
  in the reduced building corresponding to the standard Iwahori
  subgroup.

  A variation on this construction gives flat $G$-bundles with slope
  $m+\frac{h-1}{h}$: define $[\n']=X'z^{-m}\ddz$, where
  $X'=(z^{-1}y_{-\a_0}+\sum_{i=1}^n y_{i})$ and $y_{i}\in\fu_{\a_i}$,
  $y_{-\a_0}\in\fu_{-\a_0}$ are nonzero root vectors.

\end{exam}

For $\SL_n$, the previous example gives flat $\SL_n$-bundles of slope
$m+1/n$ and $m+(n-1)/n$.  The next example constructs flat
$\SL_n$-bundles with slopes that are integer translates of $1/(n-1)$.
Here, $\n$ supports fundamental strata on a line in $\Ao$.  For
clarity, we take $n=3$.

\begin{exam}\label{slnexample} Let $G=\SL_3(k)$.  Set $X=z^{-1}e_{12}+e_{21}\in\hfg$,
  and consider the flat $G$-bundle $[\n]=X z^{-m}\ddz$.  We show that
  $\slope(\n)=m+\frac{1}{2}$.  It suffices to find $x\in\Ao$ for which
  the regular semisimple matrix $X$ lies in $\hfg_{x}(-\frac{1}{2})$.
  Writing $\tx=\diag(x_1,x_2,x_3)$ with $x_1+x_2+x_3=0$, this is
  equivalent to $x_1-x_{2}=1/2$.  This is the line connecting the
  barycenters of the faces of the fundamental alcove where the last
  simple root $\a_{2}$ (with respect to the usual order) vanishes and
  the highest root $\a_{0}$ equals $1$.  It is easy to check that $x$
  supports a stratum of depth $m+\frac{1}{2}$ (with respect to $\phi$)
  precisely for $x$ on this line.  Note that this line does not
  contain a barycenter of an alcove; if it did, the slope of $\n$
  would have to be a multiple of $1/3$.

  A more intuitive explanation is obtained by looking at lattice
  chains.  For any $s \in \Z$, write $s = 3q+j$ with $0 \le j <3.$
  Now, define $L^s$ be the lattice with $\fo$-basis $\{z^{q}e_i\mid
  i\le 3-j\}\cup\{ z^{q+1}e_i\mid i> 3-j\}$.  The lattice chain
  $\cL=(L^s)$ corresponds to the fundamental alcove while the period
  $2$ lattice chains $\cL_j=(L^s\mid s\not\equiv j\mod 3)$ correspond
  to the faces.  In this terminology, $\cL_1$ and $\cL_0$ correspond
  to the faces $\a_{2}=0$ and $\a_0=1$ respectively. Note that
  $X(L^s)\subset L^{s-1}-L^s$ unless $s\equiv 1$, in which case
  $X(L^s)\subset L^{s-2}-L^{s-1}$.  Accordingly, the depth of the
  stratum contained in $\n$ induced by the lattice filtrations for
  $\cL$ is $m+2/3$ while for $\cL_j$, it is $m+1/2$ if $j= 0,1$ and
  $m+1$ if $j=2$.

  A similar analysis shows that the flat $\SL_n(k)$-bundle $[\n]=X
  z^{-m}\ddz$ with $X=z^{-1}e_{1(n-1)}+\sum_{i=1}^{n-2}e_{(i+1)i}$ has
  slope $m+\frac{1}{n-1}$.

\end{exam}

The following example shows that one can have a flat $G$-bundle that
supports a fundamental stratum only at a single point in $\bB$, which,
unlike Example~\ref{Cox}, is not in an alcove.

\begin{exam} Let $G=\Sp_{4}(k)$, with the form defined by $\langle
  e_i,e_{j+2}\rangle=\de_{ij}=-\langle e_{i+2},e_{j}\rangle$ and
  $\langle e_i,e_{j}\rangle=0$ for $1\le i,j\le 2$.  Set
  $Y=z^{-1}(e_{13}-e_{24})+e_{31}+e_{42}$, and let $\n$ be the flat
  $\Sp_4(k)$-bundle defined by $[\n]=Y z^{-m}\ddz$. Here, $\n$ is a
  connection of slope $m+\frac{1}{2}$, and there is a unique point in
  the standard apartment supporting a fundamental stratum for $\phi$
  with this depth.  Indeed, setting $\tx=(x_1,x_2,-x_1,-x_2)$, the $4$
  inequalities $-2x_i\ge -1/2$ and $2x_i-1\ge -1/2$ immediately give
  $x_1=x_2=1/4$.  This point is the barycenter of the edge of the
  fundamental alcove where the short simple root $\a_1$ vanishes.
  Since $Y$ is regular semisimple and is graded with respect to this
  filtration, the corresponding stratum at $x$ is fundamental.

  One can give a lattice-theoretic interpretation here as well.
  Parahoric subgroups of $\Sp_{2n}(k)$ are stabilizers of
  \emph{symplectic} lattice chains: lattice chains which are closed
  under homothety and duality with respect to the symplectic
  form~\cite{Sa00}.  For $n=2$, one period of the lattice chain $\cL$
  stabilized by the standard Iwahori subgroup is \begin{multline*}
    L^0=\fo^4\supset L^1=\spa\{e_1,e_2,z e_3,e_4\}\supset\\
    L^2=\spa\{e_1,e_2,z e_3,z e_4\}\supset L^3=\spa\{e_1,z e_2,z e_3,z
    e_4\}.
\end{multline*}
The edges are obtained from $\cL_j$ for $0\le j\le 2$, the symplectic
lattice chain generated by $L_i$, $0\le i\le 2$, $i\ne j$.  In
particular, $\a_1=0$, $\a_2=0$, and $\a_0=1$ (with $\a_2$ the long
simple root and $\a_0$ the highest root) correspond to $j$ equal to
$1$, $2$, and $0$ respectively.  We have $Y(L^{4q+3})\subset
L^{4q}-L^{4q+1}$ and $Y(L^j)\subset L^{j-2}-L^{j-1}$ otherwise. Thus,
$Y$ shifts $\cL^1$ by $-1$, $\cL^0$ and $\cL^2$ by $-2$, and $\cL$ by
$-3$.  The depth of the corresponding strata are $m+1/2$, $m+1$, and
$m+3/4$.  (Note that for partial symplectic lattice chains, one
does not obtain the depth by dividing the magnitude of the shift by
the period.)

\end{exam}

The previous three examples have the special property that they
contain fundamental strata whose graded representative in $\hfg^\vee$
has a maximal \emph{nonsplit} torus in $\hG$ as its connected
stabilizer under the coadjoint action.  In the case of $\GL_n$, these
\emph{regular strata} have been the key ingredient in constructing
smooth symplectic and Poisson moduli spaces of connections and have
allowed the realization of the isomonodromy equations as an integrable
system~\cite{BrSa1,BrSa2}.  Regular strata and flat $G$-bundles
containing them for reductive $G$ are studied in detail
in~\cite{BrSa5}.

\appendix

\section{Complements on flat vector bundles}\label{glappendix}

The authors have studied flat vector bundles using $\GL_n$-strata in
previous work~\cite{BrSa1}, and these results are cited frequently in
this paper.  However, there are certain differences between the set-up
in~\cite{BrSa1} and our present approach to flat vector bundles.  For
example, only lattice chain filtrations were considered
in~\cite{BrSa1}, and the notation for strata was given in terms of
parahoric subgroups instead of points in the building.  Furthermore,
the definition of containment of a stratum in a flat vector bundle
(\cite[Definition 4.1]{BrSa1}) is different from
Definition~\ref{vbcontaindef} and is only equivalent for strata of
depth greater than $0$.  Here, we provide the necessary explanations.
We also show that our theory gives the same results for the equivalent
concepts of flat rank $n$ vector bundles and flat $\GL_n$-bundles.
\subsection{Definitions of containment}

We begin with a general proposition about stratum containment.

\begin{prop}\label{containops} The subset of $\Rep(G)$ satisfying \eqref{stratadefeq} for
  the flat $G$-bundle $(\sG,\n)$ and the stratum $(x,r,\b)$ is closed
  under taking subrepresentations, duals, direct sums, tensor
  products, and homomorphism spaces.  Moreover, it always contains
  the trivial representation.
\end{prop}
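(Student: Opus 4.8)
The plan is to reduce the containment condition \eqref{stratadefeq} to an assertion about a single $F$-linear operator that does not depend on the representation, and then to deduce closure under the five operations from the elementary compatibilities of Moy--Prasad filtrations recorded in Section~\ref{subfilt}. A convenient feature is that closure under duals, tensor products, and homomorphism spaces will all flow from one computation with $\widehat{\Hom(V,W)}$, once we know the trivial representation lies in the set.

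Arguing as in the proof of Lemma~\ref{strataequivariance}, which treats one representation at a time, the subset of $\Rep(G)$ satisfying \eqref{stratadefeq} for $(x,r,\b)$ with respect to $\phi$ is unchanged if we replace $(x,r,\b),\phi$ by $g\cdot(x,r,\b),g\phi$ for $g\in\hG$; choosing $g$ with $gx\in\Ao$, we may assume $x\in\Ao$. Fix a representative $\tb\in\hfg^\vee_{x,-r}$ of $\b$ and set
\[ C\;:=\;\iota_\tau([\n]_\phi)\;-\;\tx\;-\;\iota_\tau(X_{\tb})\;\in\;\hfg, \]
an element independent of $V$. Applying $\iota_\tau$, the operator $\n_\phi-i\ddz-X_{\tb}$ acting on $\hV$ becomes $(\tau+\tx-i)+\rho_V(C)$, with $\tau+\tx$ the operator of Proposition~\ref{ev}. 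Since $(\tau+\tx-i)(\hV_{x,i})\subseteq\hV_{x,i+}\subseteq\hV_{x,(i-r)+}$ by Proposition~\ref{ev}\eqref{ev2} and $r\ge0$, condition \eqref{stratadefeq} holds for $V$ \emph{if and only if} $\rho_V(C)(\hV_{x,i})\subseteq\hV_{x,(i-r)+}$ for all $i\in\R$; by discreteness of the critical numbers this is equivalent to $\rho_V(C)\in\gl(\hV)_{x,-r+}$, i.e.\ to $\rho_V(C)$ raising the $x$-filtration degree on $\hV$ by strictly more than $-r$. This is the key reduction: the $-i\ddz$ term disappears, and the condition now refers only to the filtered $\fo$-module $\hV$ and the action of the fixed element $C\in\hfg$.

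Closure is now checked case by case. For the trivial representation, $\rho_k(C)=0$ because $\fg$ acts trivially on $k$, so the condition is vacuous. For a subrepresentation $U\subseteq V$, the operator $\rho_V(C)$ preserves $\hU$ and restricts there to $\rho_U(C)$, while $\hU_{x,i}=\hV_{x,i}\cap\hU$, so the containment for $V$ restricts to that for $U$. For a direct sum, everything is block-diagonal and $\widehat{(V\oplus W)}_{x,i}=\hV_{x,i}\oplus\hat W_{x,i}$. For a homomorphism space, identify $\widehat{\Hom(V,W)}$ with $\Hom_F(\hV,\hat W)$, under which $C$ acts as $f\mapsto\rho_W(C)\circ f-f\circ\rho_V(C)$, and recall from \eqref{hommp} that $\widehat{\Hom(V,W)}_{x,i}=\{f:f(\hV_{x,s})\subseteq\hat W_{x,s+i}\ \forall s\}$. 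If $f$ lies in this set and $v\in\hV_{x,s}$, then $\rho_W(C)(f(v))\in\hat W_{x,(s+i-r)+}$ by the hypothesis on $W$, and $f(\rho_V(C)(v))\in f(\hV_{x,(s-r)+})\subseteq\hat W_{x,(s+i-r)+}$ by the hypothesis on $V$ together with the filtration property of $f$; hence $\rho_W(C)\circ f-f\circ\rho_V(C)$ carries $\hV_{x,s}$ into $\hat W_{x,(s+i-r)+}$ for every $s$, which places $C\cdot f$ in $\widehat{\Hom(V,W)}_{x,(i-r)+}$. Finally $V^\vee=\Hom(V,k)$ and $V\otimes W\cong\Hom(V^\vee,W)$, identifications that respect Moy--Prasad filtrations (Section~\ref{subfilt}); closure under duals follows from the homomorphism case and the trivial representation, and closure under tensor products follows from the homomorphism case and closure under duals.

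I do not expect a serious obstacle once the reduction is in place; the only delicate points are the bookkeeping with the strict refinements $(\,\cdot\,)+$ in the homomorphism computation and the discreteness-of-critical-numbers remark needed to move between ``$\rho_V(C)(\hV_{x,i})\subseteq\hV_{x,(i-r)+}$ for all $i$'' and ``$\rho_V(C)\in\gl(\hV)_{x,-r+}$.'' Both are routine after passing to the $F$-linear operator $C$, at which stage the proposition reduces to a short list of verifications resting on the compatibility of Moy--Prasad filtrations with subquotients, direct sums, and Hom-spaces.
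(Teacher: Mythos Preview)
Your argument is correct and takes a genuinely different route from the paper's proof, though the two share the same skeleton (trivial rep, then $\Hom$, then recover duals and tensors). The paper works \emph{directly} with the operator $\n_\phi - i\,\ddz - X_{\tb}$ for arbitrary $x\in\B$: for the trivial representation it checks by hand that $\tau-i$ strictly raises valuation on $z^{\lceil i\rceil}\fo$, and for $\Hom(U,W)$ it keeps the derivation in play and uses the identity
\[
[(\n_\phi-i\ddz-X_{\tb})(f)](\,\cdot\,)
=(\n_\phi-(i+s)\ddz-X_{\tb})(f(\,\cdot\,))-f((\n_\phi-s\ddz-X_{\tb})(\,\cdot\,)),
\]
which amounts to splitting $i\ddz=(i+s)\ddz-s\ddz$ so that each piece matches the shape of \eqref{stratadefeq} on $W$ and $U$ separately. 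You instead spend an extra step at the outset---passing to $x\in\Ao$ via the representation-by-representation argument of Lemma~\ref{strataequivariance}, then invoking Proposition~\ref{ev}\eqref{ev2} to peel off $(\tau+\tx-i)$---so that what remains is the \emph{$F$-linear} condition ``$\rho_V(C)$ raises the $x$-filtration by strictly more than $-r$.'' This purchase is real: your $\Hom$ computation is then the routine commutator estimate, with no bookkeeping for the $i\ddz$ term, and the trivial representation is immediate since $\rho_k(C)=0$. The cost is the preliminary reduction to $\Ao$ and the appeal to equivariance. Note that the side remark identifying the condition with ``$\rho_V(C)\in\gl(\hV)_{x,-r+}$'' is not actually needed in your case-by-case checks (you verify the $\forall i$ form directly), so the discreteness-of-critical-numbers point you flag is harmless even if one prefers not to unpack it.
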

\begin{proof} The fact that this set is closed under
  subrepresentations and direct sums is trivial.  Let $U$ and $W$ be
  two representations satisfying~\eqref{stratadefeq}.  Using the fact
  that $U^\vee=\Hom(U,k)$ and $U\otimes W\cong\Hom(U^\vee,W)$, it
  suffices to check ~\eqref{stratadefeq} for the trivial
  representation $k$ and for $\Hom(U,W)$.  For $\hat{k}=F$, each
  Moy-Prasad filtration is the usual one, so we need only check that
  the operator $\tau-i$ strictly increases the valuation for Laurent
  series of valuation at least $i$.  This follows since, for any
  $a\in\fo$ and $k\ge i$, $(\tau -i)(az^k)=(k-i)az^k+\frac{da}{dz}
  z^{k+1}$ has valuation at least $i+1$.

Next, take $f\in\widehat{\Hom(U,W)}_{x,i}=\Hom(\hU,\hat{W})_{x,i}$, so $f(U_{x,s})\subset W_{x,s+i}$ for
all $s$.  Recalling that $\n_\phi$ acts on $f$ via $\n_\phi\circ
f-f\circ \n_\phi$ and similarly for $X_{\tb}$, we compute:
\begin{equation*}
\begin{split}
 [(\n_\phi-i\ddz-X_{\tb})&(f)](U_{x,s})\\&=(\n_\phi-(i+s)\ddz-X_{\tb})(f(U_{x,s}))-f((\n_\phi-s\ddz-X_{\tb})(U_{x,s}))\\
&\subset (\n_\phi-(i+s)\ddz-X_{\tb})(W_{x,s+i})-f(U_{x,(s-r)+})\ddz\\
&\subset \Om^1(\hatW)_{x,(s+i-r)+}.
\end{split}
\end{equation*}
Hence, $(\n_\phi-i\ddz-X_{\tb})(\widehat{\Hom(U,W)}_{x,i})\subset
\Om^1(\widehat{\Hom(U,W)})_{x,(i-r)+}$ as desired.
\end{proof}

\begin{cor}\label{vbgl}  The $\GL_n$-stratum $(x,r,\b)$ is contained in the flat
  $\GL_n$-bundle $(\sG,\n)$ if and only if it is contained in the
  associated vector bundle for the standard representation.
\end{cor}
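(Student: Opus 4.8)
The plan is to reduce this to Proposition~\ref{containops}, which carries all of the real content, together with the elementary fact that the standard representation tensor-generates $\Rep(\GL_n)$.  Write $V$ for the standard representation, so that $\GL_n=\GL(V)$, $\fg=\gl(V)$, and the inclusion $\rho=\mathrm{id}$ induces the identity on $\B(\GL_n)$ via the map $\rho_*$ of Proposition~\ref{rhoG}, with $\rho(\b)=\b$.  Under this identification the statement that the flat vector bundle $(V_\sG,\n_V)$ contains $(x,r,\b)$ is, by Definition~\ref{vbcontaindef}, precisely the statement that \eqref{stratadefeq} holds for the single representation $V$ (using that a $\GL(V)$-stratum is the same thing as a $\hV$-stratum).

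For the ``only if'' direction I would simply observe that if $(\sG,\n)$ contains $(x,r,\b)$ with respect to a trivialization $\phi$, then by Definition~\ref{stratadef} the relation \eqref{stratadefeq} holds for every $V\in\Rep(\GL_n)$, in particular for the standard representation; equivalently, this is Corollary~\ref{rhostrat} applied to $\rho=\mathrm{id}$.  For the ``if'' direction, suppose \eqref{stratadefeq} holds for the standard representation.  By Proposition~\ref{containops}, the class of representations in $\Rep(\GL_n)$ for which \eqref{stratadefeq} holds---for the fixed data $\sG$, $\n$, $\phi$, $x$, $r$, $\b$---is closed under subrepresentations, duals, direct sums, tensor products, and $\Hom$-spaces, and it contains the trivial representation.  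Since every object of $\Rep(\GL_n)$ is obtained from $V$ by finitely many such operations, this class is all of $\Rep(\GL_n)$, whence $(\sG,\n)$ contains $(x,r,\b)$ with respect to $\phi$.

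I do not expect a real obstacle.  The one mildly delicate point---that the Moy-Prasad filtrations induced by $x\in\B(\GL_n)$ on the various tensor constructions built from $\hV$ are exactly those appearing in \eqref{stratadefeq}---is precisely what is packaged into Proposition~\ref{containops} (via the formula \eqref{hommp} for the filtration on $\Hom$-spaces), so nothing further need be checked.  The only thing to be careful about is to invoke the correct sense in which $V$ generates $\Rep(\GL_n)$, namely via duals, direct sums, tensor products, and subrepresentations; this is standard rational representation theory, e.g.\ $\det^{\pm 1}$ arises from $\Lambda^n V\subset V^{\otimes n}$ together with duality.
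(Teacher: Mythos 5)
Your proof is correct and follows essentially the same route as the paper: both reduce the ``if'' direction to Proposition~\ref{containops} together with the standard fact that $V$ tensor-generates $\Rep(\GL_n)$, and both note the ``only if'' direction is immediate from Definition~\ref{stratadef}. The extra detail you supply (the identification $\rho_*=\mathrm{id}$, the explicit appeal to Corollary~\ref{rhostrat}) is consistent with, and merely spells out, what the paper's terse proof takes for granted.
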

\begin{proof}
  Recall that $(x,r,\b)$ is contained in $(\sG,\n)$ if and only if
  \eqref{stratadefeq} holds for $W\in\Rep(\GL(V))$ while it is
  contained in $(\sG_V,\n_V)$ if the same equation holds for the
  standard representation $V$.  It thus suffices to show that if
  \eqref{stratadefeq} holds for the standard representation $V$, then
  it holds for any representation.  Since any representation of
  $\GL(V)$ is obtained from $V$ via some combination of the operations
  in the previous proposition, the result follows.
\end{proof}

In~\cite{BrSa1}, a $U$-stratum was defined as a triple $(P,s,\b)$ with
$P\subset\GL(U)$ a parahoric subgroup, $s$ a nonnegative integers, and
$\b$ a functional on the quotient of consecutive congruent subalgebras
$\b\in(\fP^s/\fP^{s+1})^\vee$.  The congruent subalgebras are defined
in terms of a lattice chain $\cL$ (say of period $e$) satisfying
$\Stab(\cL)=P$.  (This lattice chain is unique up to translation of
the indices by an integer.)  As explained in Section~\ref{filtF},
$\cL$ gives rise to uniform $\R$-filtrations $\{U^\cL_r\}$ and
$\{\gl(U)^\cL_r\}$ with critical numbers at $\frac{1}{e}\Z$.  The
latter filtration is the Moy-Prasad filtration at $x$, where $x\in\B$
is any point in the building lying above the barycenter of the simplex
in $\bB$ corresponding to $P$; the filtration on $U$ comes from a
unique such $x$ which we denote by $x_\cL$.

To see this, fix a $k$-structure for $U$, i.e., a $k$-subspace $V$
such that $U=V\otimes F$.  Recall that any parahoric subgroup in
$\GL(U)$ is conjugate to a \emph{standard parahoric subgroup} with
respect to $V$, i.e., the pullback under $\GL(V\otimes\fo)\to\GL(V)$
of a standard parabolic subgroup $Q\subset \GL(V)$ (so $Q\supset B$).
We may thus assume without loss of generality that $P$ is a standard
parahoric subgroup of this form.  Any lattice chain with stabilizer
contains $V\otimes\fo$, so we may assume that this lattice is $L^0$.
If $e_1,\dots e_n$ is an ordered basis of $V$ compatible with the flag
of $Q$, then there exist indices $s_0=n+1> s_1>s_2>\dots>s_e=1$ such
that for $q\in\Z$ and $0\le j<e$, $L^{qe+j}$ has $\fo$-basis $\{z^q
e_i\mid i<s_j\}\cup\{z^{q+1} e_i\mid i\ge s_j\}$.  Let $\tx=\sum
a_ie^*_{ii}$ where $a_i=\frac{e-j}{e}$ for $s_j\le i<s_{j-1}$, so that
$x\in\Ao$ is the barycenter of the simplex in $\bB$ corresponding to
$P$.  A direct calculation now shows that $\Crit_x=\frac{1}{e}\Z$ and
for all $m\in\Z$, $U_{x,m/e}=L^m$ and $\gl(U)_{x,m/e}=\fP^m$.

In~\cite{BrSa1}, the association of $U$-strata with a formal
flat vector bundle $(U,\n)$ was slightly different than that given here.  We
will show that for strata of positive depth, the two formulations
agree.  For $(U,\n)$ to contain a stratum $(P,s,\b)$ in the sense of
\cite[Definition 4.1]{BrSa1} (with $P$ the stabilizer of a period $e$
lattice chain $\cL$), one first needs $\cL$ to be ``compatible'' with
$\n$.  One can then consider the endomorphism of the associated graded
space $\gr(\cL)=\bigoplus L^i/L^{i+1}$ induced by
$\iota_\tau\circ \n$.  Note that $\b$ also induces
an endomorphism $\gr(\cL)$; indeed, this endomorphism is given by
multiplication by an element $Y\in\fP^{-r}$ corresponding to a
representative $\tb$.  Containment now means that these two
endomorphism coincide on all sufficiently large graded subspaces.  (If
$r>0$, the endomorphisms will actually be the
same.)

By choosing a trivialization of $U$ compatible with $\cL$ (as in
\cite[Remark 2.9]{BrSa1}) and an appropriate $k$-rational structure
$V$ for $U$, one may assume without loss of generality that $P$ is a
standard parahoric subgroup in $\GL_n(\fo)$ associated to the optimal
point $x_\cL\in\Ao$.  For $s>0$, it is now immediate that the
criterion described above is the same as that given in
Proposition~\ref{strataprop} for containment of
$(x_\cL,s/e,\b)$.  Here, we are using the fact that
the normalization term $-\tx\ddz$, which does not appear in
\cite{BrSa1}, makes no contribution in this case.  We are also using
Corollary~\ref{vbgl}, which allows us to apply
Proposition~\ref{strataprop} instead of Definition~\ref{vbcontaindef}.

When $s = 0$, the definitions are not equivalent due to the
normalization term.  However, to
apply results from \cite{BrSa1}, we only need to know that $(U,\n)$
contains a stratum $(x_\cL,0,\b')$ if and only if it contains a
stratum $(P,0,\b)$.  In the latter case, after choosing a
trivialization identifying $L^0\in\cL$ with $\fo^n$, we see that
$\iota_\tau([\n]_\phi)\in\gl_n(\fo)$.  Accordingly,
$\n$ contains the depth $0$ stratum supported at the origin of $\Ao$
determined by $[\n]_\phi$.  Conversely, suppose that $\n$ contains
$(x,0,\b)$ for $x$ an optimal point corresponding to the lattice chain
$\cL$.  Since $[\n]_\phi-\tx\frac{dz}{z}$ and $\tx\frac{dz}{z}$ are
both in $\gl_n(F)^\perp_{x+}$, the same is true for $[\n]_\phi$.
It follows that $\iota_\tau([\n]_\phi)$ preserves every lattice in $\cL$, so by
\cite[Lemme 6.21]{De}, $\n$ is regular singular.  Lemma 4.8 of
\cite{BrSa1} now implies that $\n$ contains a depth $0$ stratum
$(P,0,\b')$.

Summing up the proceeding discussion, we see:
\begin{prop}\label{vbequiv} Let $(U,\n)$ be a flat vector bundle.  Let
  $P\subset\GL(U)$ be a parahoric subgroup corresponding to a lattice
  chain $\cL$ of period $e$, and let $x\in\B$ be any point which has
  the same image in $\bB$ as $x_\cL$.
\begin{enumerate} \item If $s>0$, $(U,\n)$ contains $(P,s,\b)$ in the
  sense of \cite[Definition 4.1]{BrSa1} if and only it contains
  $(x,s/e,\b)$ in the sense of Definition~\ref{vbcontaindef}. 
\item The flat vector bundle $(U,\n)$ contains a depth $0$ stratum
  $(P,0,\b)$ if and only if it contains a depth $0$ stratum
  $(x,0,\b')$.
\end{enumerate}
\end{prop}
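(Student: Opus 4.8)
The plan is to assemble the reductions and comparisons laid out in the discussion above into a clean two-part argument; set $r=s/e$. First I would choose a trivialization of $U$ compatible with $\cL$ (as in \cite[Remark 2.9]{BrSa1}) together with an adapted $k$-rational structure $V\subset U$, reducing to the case where $P$ is the standard parahoric attached to $x_\cL\in\Ao$; the explicit basis computation recalled above (with indices $s_0>s_1>\dots>s_e$ and the resulting formula for $\widetilde{x_\cL}$) then identifies the congruence data with Moy-Prasad data, giving $L^m=U_{x_\cL,m/e}$, $\fP^m=\gl(U)_{x_\cL,m/e}$, and $\Crit_{x_\cL}=\frac{1}{e}\Z$. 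Because both notions of containment are insensitive to the fiber coordinate over $\bB$ — at positive depth once the $-j\ddz$ term of Definition~\ref{vbcontaindef} is dropped, and at depth $0$ by Proposition~\ref{regsing} — it then suffices to prove each equivalence with $x=x_\cL$.

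For part (1), Corollary~\ref{vbgl} lets me test containment of $(x_\cL,r,\b)$ in the flat $\GL_n$-bundle via Proposition~\ref{strataprop} rather than Definition~\ref{vbcontaindef}: the condition becomes $[\n]_\phi-\widetilde{x_\cL}\ddz\in\gl(U)^\perp_{x_\cL,r+}$ together with the requirement that the coset modulo $\gl(U)^\vee_{x_\cL,-r+}$ represent $\b$. Unwinding \cite[Definition 4.1]{BrSa1} instead says that the endomorphism of $\gr(\cL)=\bigoplus_i L^i/L^{i+1}$ induced by $\iota_\tau\circ\n$ agrees with the one induced by a representative of $\b$, which translates into the identical pair of conditions but with $[\n]_\phi$ in place of $[\n]_\phi-\widetilde{x_\cL}\ddz$. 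Since $\widetilde{x_\cL}\in\ft\subset\gl(U)_{x_\cL,0}$, the one-form $\widetilde{x_\cL}\ddz$ lies in $\gl(U)^\vee_{x_\cL,0}$, and Proposition~\ref{dual} shows that for $r>0$ this space is contained in both $\gl(U)^\perp_{x_\cL,r+}$ and $\gl(U)^\vee_{x_\cL,-r+}$; hence the normalization term affects neither condition, and the two notions of containment coincide when $s>0$.

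For part (2) the term $-\widetilde{x_\cL}\ddz$ genuinely contributes, so I would instead show that each of the two containment statements is equivalent to $(U,\n)$ being regular singular. If $(U,\n)$ contains $(P,0,\b)$ in the sense of \cite[Definition 4.1]{BrSa1}, then for a compatible trivialization $\iota_\tau([\n]_\phi)$ preserves every lattice of $\cL$, so $\n$ is regular singular by \cite[Lemme 6.21]{De}; conversely \cite[Lemma 4.8]{BrSa1} says a regular singular bundle contains a depth-$0$ $P$-stratum. Likewise, if $(U,\n)$ contains $(x_\cL,0,\b')$ then Proposition~\ref{strataprop} gives $[\n]_\phi-\widetilde{x_\cL}\ddz\in\gl(U)^\perp_{x_\cL,0+}$, and since $\widetilde{x_\cL}\ddz$ lies there too we get $[\n]_\phi\in\gl(U)^\perp_{x_\cL,0+}$, i.e.\ $\iota_\tau([\n]_\phi)\in\gl(U)_{x_\cL,0}=\fP$ preserves every lattice of $\cL$, so $\n$ is again regular singular; and if $\n$ is regular singular then \cite[Lemma 4.8]{BrSa1} yields a depth-$0$ $P$-stratum, whence $\iota_\tau([\n]_\phi)\in\fP=\gl(U)_{x_\cL,0}$ and Proposition~\ref{strataprop} exhibits a depth-$0$ stratum at $x_\cL$ determined by $[\n]_\phi-\widetilde{x_\cL}\ddz$.

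The main obstacle is precisely this depth-$0$ case: there $-\widetilde{x_\cL}\ddz$ changes the relevant coset, so the functionals attached to the two notions need not agree and only the two \emph{existence} statements can be matched. Routing this through regular singularity — and so through \cite[Lemme 6.21]{De} and \cite[Lemma 4.8]{BrSa1} — is the one substantive step beyond bookkeeping, and it suffices because at depth $0$ only the regular/irregular dichotomy is needed downstream.
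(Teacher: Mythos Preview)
Your proof is correct and follows essentially the same approach as the paper: the reduction to the standard parahoric at $x_\cL$, the use of Corollary~\ref{vbgl} with Proposition~\ref{strataprop} to neutralize the $-\widetilde{x_\cL}\ddz$ term when $s>0$, and the routing of the depth-$0$ case through regular singularity via \cite[Lemme~6.21]{De} and \cite[Lemma~4.8]{BrSa1} are exactly the ingredients the paper assembles in the discussion preceding the proposition. Your organization of part~(2) as ``each side is equivalent to regular singularity'' is in fact slightly cleaner than the paper's, which for the implication $(P,0,\b)\Rightarrow(x,0,\b')$ lands at the origin rather than directly at $x_\cL$.
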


\subsection{Slopes of vector bundles via the theory of
  strata}\label{nonbaryproof}

In \cite[Theorem 4.10]{BrSa1}, we showed that the slope of a flat
vector bundle is determined by the strata contained in it associated
to lattice chain filtrations.  We are now ready to prove
Proposition~\ref{nonbary}, which generalizes this result to allow
arbitrary Moy-Prasad filtrations.

\begin{lem} If $(\hV,\n)$ contains a fundamental stratum $(x,r,\b)$
  with respect to the trivialization $\phi$ with
  $x\in\bar{C}\subset\Ao$ and $r>0$, then it also contains a
  fundamental stratum $(x',r,\b')$ with respect to $\phi$ with the
  same depth and $x'\in\bar{C}$ an optimal point.
\end{lem}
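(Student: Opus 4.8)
The plan is to translate the hypothesis into a condition on the connection matrix and then move the base point to an optimal point using the combinatorics of the apartment. Regard $(\hV,\n)$ as a flat $\GL(V)$-bundle, so that Corollary~\ref{vbgl} and Propositions~\ref{strataprop}, \ref{nilpotent} and \ref{minr} apply; identify $\hfg^\vee$ with $\hfg$ via the fixed invariant form and set $M=\iota_\tau([\n]_\phi)\in\hfg$. Since $r>0$ the term $\tx\ddz$ lies in $\hfg_{x,0}\subset\hfg_{x,-r+}$, so by Proposition~\ref{strataprop} the hypothesis says exactly that $M\in\hfg_{x,-r}$, and by Proposition~\ref{nilpotent} that the image $\bar M$ of $M$ in $\hfg_{x}(-r)$ is nonnilpotent. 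The plan then reduces to producing an optimal point $x'\in\bar C$ with $M\in\hfg_{x',-r}$: granting this, Proposition~\ref{strataprop} shows $(\hV,\n)$ contains a stratum $(x',r,\b')$ with respect to $\phi$, and by the last assertion of Proposition~\ref{minr} (applied with the given fundamental stratum of depth $r>0$) this stratum, having depth $r$, is automatically fundamental.

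First I would record the consequence of Proposition~\ref{minr} that $(\hV,\n)$ contains no stratum of depth $<r$, for any trivialization; combined with Proposition~\ref{strataprop} this forces $M\notin\hfg_{y,-r+}$ for \emph{every} $y\in\Ao$ (if $M\in\hfg_{y,t}$ with $t>-r$, then $M-\ty\ddz\in\hfg_{y,\min(t,0)}$ gives a contained stratum at $y$ of depth $\max(-t,0)<r$). Writing $M=\sum_{(\a,k)\in S}c_{\a,k}z^{k}$ with each $c_{\a,k}\in\fu_\a$ nonzero ($\a$ running over the roots and $0$), we have $M\in\hfg_{y,-r}$ exactly when $\a(\ty)+k\ge-r$ for all $(\a,k)\in S$. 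Hence $\rho(y):=\max_{(\a,k)\in S}(-\a(\ty)-k)$ is a convex, piecewise-affine function on $\Ao$ with $\rho\ge r$ everywhere and $\rho(x)=r$, and its minimum locus $Q:=\{y:\rho(y)=r\}=\{y\in\Ao:M\in\hfg_{y,-r}\}$ is a nonempty polyhedron meeting $\bar C$.

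The crux, and the step I expect to be the \emph{main obstacle}, is to show that $Q\cap\bar C$ contains an optimal point; this is where nonnilpotency of $\bar M$ is essential. Conjugating the degree $-r$ component of $M$ by $z^{\tx}$ turns it into $z^{-r}N$ with $N\in\gl_n(k)$, whose $(i,j)$ entry is the coefficient $c_{e_i-e_j,\,k}$ for the unique $k$ (if any) with $(e_i-e_j)(\tx)+k=-r$; nonnilpotency of $\bar M$ means $N$ is nonnilpotent, hence its directed graph contains a cycle $i_1\to\cdots\to i_m\to i_1$. This yields affine roots $\psi_\ell(y)=\a_\ell(\ty)+k_\ell$ with $\a_\ell=e_{i_\ell}-e_{i_{\ell+1}}$, all in $S$, with $\psi_\ell(x)=-r$ and, since $\sum_\ell\a_\ell=0$, with $\sum_\ell\psi_\ell\equiv\sum_\ell k_\ell=-mr$ (so $mr\in\Z$). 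For $y\in Q$ the inequalities $\psi_\ell(y)\ge-r$ hold while their sum is $-mr$, forcing $\psi_\ell(y)=-r$ for all $\ell$; thus $Q$ lies in the rational affine subspace $\bigcap_\ell\{\psi_\ell=-r\}$. Picking $p$ in the relative interior of $Q\cap\bar C$ inside the relative interior of the minimal face $F$ of $\bar C$ containing $Q\cap\bar C$, every $(\a,k)\in S$ with $\psi_{\a,k}(p)=-r$ is constant equal to $-r$ on $\mathrm{aff}(Q\cap\bar C)$ while the remaining constraints are slack near $p$; matching these tight affine-root equalities against the description of optimal points as the maximizers over $\bar C$ of the functions $f_\Xi=\min_{\psi\in\Xi}\psi$, $\Xi\subset\Sigma_C$ (cf. \cite[Section 6]{MP1}, \cite{AD}), produces an optimal $x'\in Q\cap\bar C$ --- the zero-sum cycle being exactly what prevents $Q\cap\bar C$ from reducing to a single non-optimal point.

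With such $x'$ in hand, $M\in\hfg_{x',-r}$ while $M\notin\hfg_{x',-r+}$ by the second paragraph, so Proposition~\ref{strataprop} gives a stratum $(x',r,\b')$ of depth exactly $r$ contained in $(\hV,\n)$ with respect to $\phi$, which is fundamental by Proposition~\ref{minr}. The real work, as indicated, lies in the geometric step of the third paragraph: extracting the zero-sum cycle of affine roots from nonnilpotency and deducing from it, together with the structure of optimal points, that the minimum locus of $\rho$ meets $\bar C$ in a set containing an optimal point.
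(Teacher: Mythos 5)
Your first two paragraphs give a correct and efficient reduction: identifying $M=\iota_\tau([\n]_\phi)\in\hfg$, showing that $M\in\hfg_{x,-r}$ with nonnilpotent graded image, and using Proposition~\ref{minr} together with Proposition~\ref{strataprop} to conclude $M\notin\hfg_{y,-r+}$ for every $y\in\Ao$, so that $Q=\{y\in\Ao:M\in\hfg_{y,-r}\}$ is exactly the locus of points at which $\phi$ supports a stratum of the minimal depth $r$. The zero-sum cycle of affine roots you extract from nonnilpotency is a genuinely useful structural observation, and in particular recovers that $r$ is rational.

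The gap is exactly where you flag it: you never actually show that the rational polytope $Q\cap\bar C$ contains an optimal point. The final sentence of your third paragraph --- ``matching these tight affine-root equalities against the description of optimal points as the maximizers over $\bar C$ of the functions $f_\Xi=\min_{\psi\in\Xi}\psi$ \ldots produces an optimal $x'$'' --- is an assertion, not an argument. The functions $f_\Xi$ are built from \emph{minimal} affine roots in $\Sigma_C$ (those taking values in $[0,1]$ on $\bar C$), whereas your tight constraints are of the form $\psi_{\a,k}=-r$ for arbitrary affine roots in the support of $M$. Shifting these by integers to land in $\Sigma_C$ produces constraints of the form $\psi_0\ge -r-m$ with $\psi_0\in\Sigma_C$, and the set of $y$ where such shifted constraints are simultaneously extremal is not \emph{a priori} the maximizer set of any $f_\Xi$. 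Knowing that $Q$ lies in the affine subspace cut out by the cycle equalities, and that the tight constraints at a relative interior point of $Q\cap\bar C$ are exactly those constant on its affine hull, does not by itself locate a point of the finite set $\Opt_C$ inside $Q\cap\bar C$; there is no general principle that a face-interior polytope must contain a barycenter.

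The paper's argument closes this gap by working concretely with the lattice-chain realization available for $\GL_n$. After moving $x$ near the origin so that the filtration at $x$ is a lattice chain $\{\hV_x(b_j+qe)\}$, the nonnilpotent graded representative $X\in\gl(\hV)_x(-r)$ has a power $X^m$ carrying some $\hV_x(b_j)$ nontrivially into $\hV_x(b_j-mr)$ for every $m>0$; this forces each $b_j-mr$ to be an integral translate of some $b_i$, hence $r=a/f$ with $1\le f\le e$. The lattices $L^m:=\hV_x(b_j+m/f)$ then form a period-$f$ sub-lattice chain, and its barycenter $x'$ is the sought optimal point. Containment at $x'$ is immediate, and fundamentality is checked by the explicit observation $(\iota_\tau\circ[\n]_\phi)^m\notin\gl(\hV)_{x',-mr+1/f}$ together with Bushnell's criterion (\cite[Lemma 2.1]{Bus}). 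This is the step your approach would need to replace: some mechanism that converts the cycle data and the bounded denominator $f\le e$ into an explicit optimal point of $\bar C$ lying in $Q$. Your convex-geometric setup is an appealing alternative framing, but as written it stops short of this construction.
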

\begin{proof} 
  Let $y \in \Ao$ be a vertex adjacent to the alcove containing $x$.
  Since $G = \GL_n$, there exists $n \in \hN$ such that $n y$ is the
  origin and $nx$ is in the fundamental alcove. By Lemma~\ref{actlem}
  part~\eqref{act1}, we see that $[\n]_{n \phi} - \widetilde{n x} \ddz
  \in \Ad^*(n ) ([\n]_{\phi} - \tx \ddz) + \hfg^\vee_{n x, 0+}$.  It
  follows that the stratum $(n x, r, \b')$ determined by $[\n]_{n
    \phi}$ is contained in $(\hV, \n)$.  Moreover, this stratum is
  fundamental if and only if $(x, r, \b)$ is.

  Thus, without loss of generality, we can assume that $C$ is the
  fundamental alcove and $x$ is further in the open star of the
  origin.  In particular, this implies that the corresponding
  filtration on $\hV$ gives rise to a lattice chain $\hV_{x,b_j+qe}$
  with one term given by $V\otimes\fo$, say $\hV_{x,b_0}$; here
  $b_0<b_1<\dots <b_{e-1}<b_0+1$, and the critical numbers of the
  filtration are precisely at the translates of the $b_i$'s.  Up to
  indexing, this is just the lattice chain described above for the
  optimal point in the same open facet.  Accordingly, the graded
  pieces of the filtration are given by $\hV_{x}(b_j+qe)=\spa\{z^{q+1}
  e_j\mid s_j\le j<s_{j-1}\}$.  We claim that $(\hV,\n)$ contains a
  fundamental stratum at an optimal point coming from an appropriate
  sub-lattice chain.

  Let $X\in\gl(\hV)_x(-r)$ be the graded representative corresponding
  to the functional $\tbo$ via the trace form as in
  Proposition~\ref{dual}.  Since $(x,r,\b)$ is fundamental, $X$ is not
  nilpotent.  Hence, for some $j$, $X^m$ is a nonzero map sending
  $\hV_{x}(b_j)$ into $\hV_{x}(b_j-mr)$ for all $m>0$, and so each
  $b_j-mr$ is an integral translate of some $b_i$.  Since there are
  only a finite number of distinct $b_i$'s, $r$ must be a rational
  number with denominator at most $e$, say $r=a/f$ with $(a,f)=1$ and
  $1\le f\le e$.  The set of lattices
  $L^m\overset{\text{def}}{=}\hV_{x}(b_j+m/f)$ for $m\in\Z$ is thus a
  sub-lattice chain of period $f$.  Note that
  $\iota_\tau(([\n]_{\phi})(L^m))\subset L^{m-a}$.  Thus, if $x'$ is
  the optimal point corresponding to this lattice chain, $(\hV,\n)$
  contains the stratum $(x',r,\b')$, with $\b'$ induced by $\n$.
  Finally, we observe that $(\iota_\tau\circ[\n]_{\phi})^m\notin
  \gl(\hV)_{x',-mr+1/f}$ for any $m>0$.  Indeed, if this were true,
  then
  $(\iota_\tau\circ[\n]_{\phi})^m(\hV_{x,b_j})\subset\hV_{x,(b_j-mr)+}$;
  since $r>0$ implies
  $(\iota_\tau\circ[\n]_{\phi})-X\in\gl(\hV)_{x,-r+}$,
  $X^m(\hV_{x,b_j})\subset\hV_{x,(b_j-mr)+}$ as well. This contradicts
  the fact that $(x,r,\b)$ is fundamental.  Applying ~\cite[Lemma
  2.1]{Bus}, we conclude that $(x',r,\b')$ is fundamental.

\end{proof}

\begin{rmk} 
  It is not true that $x'$ can be taken to be an optimal point in the
  same open facet as $x$.  The rank $3$ connection defined in
  Example~\ref{slnexample} contains fundamental strata at $x$ in the
  fundamental alcove $\bar{C}$ for $\SL_3$ precisely for $x$ lying on
  the line connecting the midpoints of two sides of this triangle.

\end{rmk}

\begin{proof}[Proof of Proposition~\ref{nonbary}] By \cite[Theorem
  4.10]{BrSa1}, these statements hold if one only allows $x$ to range
  over uniform filtrations.  By Proposition~\ref{regsing} (for
  $G=\GL_n$), if $(U,\n)$ contains a stratum $(x,0,\b)$, then it also
  contains a stratum $(x',0,b')$ with $x'$ in a minimal facet.  Since
  $x'$ thus corresponds to a uniform filtration, $\slope(U,\n)=0$ and
  $\n$ is regular singular.  We can thus assume that $\n$ is irregular
  singular, i.e., $\slope(\n)>0$.  It now follows from the previous
  lemma that all fundamental strata contained in $(U,\n)$ have depth
  $\slope(\n)$.  It remains to show that one cannot have a
  nonfundamental $(x,r,\b)$ contained in the connection with $0<r\le
  \slope(\n)$.  However, this follows from Corollary~\ref{vbgl} and
  the $\GL_n$ case of Proposition~\ref{minr}.

\end{proof}

We may now combine this proposition with  Corollary~\ref{vbgl} and
Theorem~\ref{MP} to obtain the following
Corollary.

\begin{cor}\label{glslope} The slope of a flat $\GL(V)$-bundle $(\sG, \n)$ is the
  same as the slope of the flat vector bundle $(V_{\sG},\n_V)$.  In
  particular, $(\sG, \n)$ is regular singular if and only if
  $(V_{\sG},\n_V)$ is regular singular.
\end{cor}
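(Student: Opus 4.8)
The plan is to combine Corollary~\ref{vbgl}, which matches strata contained in a flat $\GL(V)$-bundle with strata contained in the associated flat vector bundle, with the characterization of slope as a minimal stratum depth. Write $V$ for the standard representation, so that $(\sG,\n)\mapsto(V_\sG,\n_V)$ is the equivalence of categories recalled in Section~\ref{formfund}. Since $\rho$ is the identity here, the map $\rho_*$ of Proposition~\ref{rhoG} is the identity on buildings, so a $\GL(V)$-stratum $(x,r,\b)$ is literally the same datum as a $\hV$-stratum; by Corollary~\ref{vbgl}, $(x,r,\b)$ is contained in $(\sG,\n)$ in the sense of Definition~\ref{stratadef} if and only if it is contained in $(V_\sG,\n_V)$ in the sense of Definition~\ref{vbcontaindef}.

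First I would invoke Theorem~\ref{slopethm}(1): $\slope(\sG,\n)$ equals the minimum depth of a stratum contained in $(\sG,\n)$. By the previous paragraph, the set of depths of strata contained in $(\sG,\n)$ coincides with the set of depths of strata contained in the flat vector bundle $(V_\sG,\n_V)$, and by Proposition~\ref{nonbary} the minimum of the latter set is exactly $\slope(V_\sG,\n_V)$. Hence $\slope(\sG,\n)=\slope(V_\sG,\n_V)$. For the regular singular assertion, one may either observe that both bundles are irregular singular precisely when their (now equal) slopes are positive---by Theorem~\ref{slopethm} for the $\GL(V)$-bundle and by Proposition~\ref{nonbary} (or the definition of slope for a flat vector bundle) for $(V_\sG,\n_V)$---or argue directly: by Theorem~\ref{MP}, $(\sG,\n)$ is regular singular iff it contains a depth-$0$ stratum, iff $(V_\sG,\n_V)$ contains a depth-$0$ stratum by Corollary~\ref{vbgl}, iff $(V_\sG,\n_V)$ is regular singular by Proposition~\ref{nonbary}.

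There is no serious obstacle here; the only point requiring care is to confirm that the two notions of containment---the $G$-bundle notion of Definition~\ref{stratadef} and the vector bundle notion of Definition~\ref{vbcontaindef}---are genuinely identified by Corollary~\ref{vbgl}, together with the harmless identification of the Bruhat--Tits building of $\GL(V)$ with that of $\GL(\hV)$ for the standard representation. Everything else follows immediately from Theorems~\ref{MP} and~\ref{slopethm} and Proposition~\ref{nonbary}.
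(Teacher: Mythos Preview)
Your proof is correct and follows essentially the same approach as the paper, which simply combines Proposition~\ref{nonbary}, Corollary~\ref{vbgl}, and Theorem~\ref{MP}; your use of Theorem~\ref{slopethm}(1) in place of Theorem~\ref{MP} is a harmless repackaging since the former is an immediate consequence of the latter.
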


\section{Equivariance of stratum containment}\label{equivappendix}

We have seen in Lemma~\ref{strataequivariance} that stratum
containment is well-behaved with respect to change of trivialization.
Here, we show that it also satisfies a stronger equivariance property.
In order to state this, it will be useful to introduce some notation.
Suppose that $x \in \Ao$.  There exists a unique flat structure
$\dAo{x}$ on the trivial $G$ bundle $\hG$ that satisfies the following
properties:
\begin{enumerate}
\item for every $V \in \Rep(G)$, $(\dAo{x} - i \ddz) \hV_{x}(i)  =
  \{0\}$; and
\item for all $g \in \hG$, 
\begin{equation}\label{dA}
(g \dAo{x} g^{-1} - i \ddz) \hV_{g x, i} \subset \Om^1(\hV)_{g x, i+}.
\end{equation}
\end{enumerate}
In fact, we will show that $\dAo{x} = d + \tx \ddz$.  It follows from
Proposition~\ref{ev}\eqref{ev1} that $d + \tx \ddz$ satisfies the
first property.  This immediately shows that the second property holds
when $g$ is the identity.  One obtains the general case of \eqref{dA}
from the observations that $i \ddz = g i \ddz g^{-1}$ and $\hV_{g x,
  i} = g \hV_{x, i}$.

To see that $\dAo{x}$ is unique, note that if $d'$ satisfies the first
condition above, then $d' - \dAo{x}$ is the zero map on all
$\hV_x(i)$.  Since connections are continuous (with respect to the
$z$-adic topology), $d'=\dAo{x}$.

Consider the space $\sQ^G$ of quintuples $(\sG, \n, x, r,\phi)$, where
$(\sG, \n)$ is a formal flat $G$-bundle with trivialization $\phi$,
such that $\n_\phi(\hV_{x, i}) \subset \Om^1(\hV)_{x, i -r}$ for all
$i\in\R$ and $V\in\Rep(G)$. We denote the subset of $\sQ^G$ with fixed
$r$ by $\sQ^G_r$.  The group $\hG$ acts on $\sQ^G$ (and each
$\sQ^G_r$) via $g (\sG, \n, x, r,\phi) = (\sG, \n, g x, r,g\phi)$.

\begin{lem}\label{stratunique} Given $(\sG, \n, x, r,\phi)\in\sQ^G$,
  there is a unique stratum $(x,r,\b)$ contained in $(\sG,\n)$ with
  respect to $\phi$.
\end{lem}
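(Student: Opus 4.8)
The claim is that, given $(\sG,\n,x,r,\phi)\in\sQ^G$, there is a unique $G$-stratum $(x,r,\b)$ contained in $(\sG,\n)$ with respect to $\phi$. The plan is to produce $\b$ by extracting the appropriate leading term of $\n_\phi$ relative to the Moy-Prasad filtration at $x$, and then to verify uniqueness by a separation argument over all representations. Since the definition of containment quantifies over all $V\in\Rep(G)$, the key point is that the single datum $\b\in(\hfg_{x,r}/\hfg_{x,r+})^\vee$ — equivalently, a representative $\tb\in\hfg^\vee_{x,-r}$ modulo $\hfg^\vee_{x,-r+}$, or an element $X_{\tb}\in\Om^1_F(\hfg)$ — simultaneously controls the behavior of $\n_\phi$ on every $\hV_{x,i}$ via the $\fg$-action, so that a choice made on $\fg$ propagates coherently to all of $\Rep(G)$.

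\textbf{Existence.} First I would reduce to the case $x\in\Ao$ by Lemma~\ref{strataequivariance}: if the result holds at a point in the standard apartment, conjugating by a suitable $g\in\hG$ transports both the quintuple and the stratum. So assume $x\in\Ao$ and work with the grading $\hfg_x(s)$. Since $(\sG,\n,x,r,\phi)\in\sQ^G$, we have $\n_\phi(\hV_{x,i})\subset\Om^1(\hV)_{x,i-r}$ for all $i$ and all $V$; in particular, for the adjoint representation, $\nbr_\phi=\iota_\tau([\n])\ddz$ with $\iota_\tau([\n])\in\hfg_{x,-r}$. Write $\dAo{x}=d+\tx\ddz$ as in the discussion preceding the lemma. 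Then $\n_\phi-\dAo{x}$ is multiplication by a one-form $\omega\in\Om^1_F(\hfg)$ lying in $\Om^1(\hfg)_{x,-r}$ (using that $\dAo{x}$ satisfies $(\dAo{x}-i\ddz)\hV_x(i)=0$ and the containment hypothesis on $\n_\phi$). Let $\tb\in\hfg^\vee_{x,-r}$ be the homogeneous functional $\tbo$ corresponding to the degree-$(-r)$ component of $\omega$ under the isomorphism $\Om^1_F(\hfg)\cong\hfg^\vee$ of Proposition~\ref{dual}\eqref{dual4}, and let $\b$ be its class in $(\hfg_{x,r}/\hfg_{x,r+})^\vee$. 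The remaining task is to check \eqref{stratadefeq}, i.e. $(\n_\phi-i\ddz-X_{\tb})\hV_{x,i}\subset\Om^1(\hV)_{x,(i-r)+}$ for every $V$. Decomposing $\n_\phi-i\ddz-X_{\tb}=(\dAo{x}-i\ddz)+(\omega-X_{\tb})$, the first summand kills $\hV_x(i)$ and raises filtration by Proposition~\ref{ev}, while $\omega-X_{\tb}$ has degree-$(-r)$ component zero on $\fg$ by construction; applying Proposition~\ref{ev}\eqref{ev3} (that $\hV_x(i)$ represents $\hV_{x,i}/\hV_{x,i+}$) together with part (4) of Proposition~\ref{ev} ($\ad(X)$ respects the grading shift), the action of $\omega-X_{\tb}\in\hfg_{x,-r+}\ddz$ on $\hV_{x,i}$ lands in $\Om^1(\hV)_{x,(i-r)+}$. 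This gives containment.

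\textbf{Uniqueness.} Suppose $(x,r,\b_1)$ and $(x,r,\b_2)$ are both contained in $(\sG,\n)$ with respect to $\phi$, with representatives $X_{\tb_1},X_{\tb_2}\in\Om^1_F(\hfg)$. Subtracting the two instances of \eqref{stratadefeq} gives $(X_{\tb_1}-X_{\tb_2})\hV_{x,i}\subset\Om^1(\hV)_{x,(i-r)+}$ for all $i$ and all $V$. Taking $V$ faithful and using Proposition~\ref{ev}, this forces the degree-$(-r)$ homogeneous component of $\tb_1-\tb_2$ to act as zero on $\hV_x(i)$ for all $i$, hence (faithfulness) to be a nilpotent-free constraint that actually pins it down: applying this to the adjoint representation already shows $\tb_1-\tb_2\in\hfg^\vee_{x,-r+}$, i.e. $\b_1=\b_2$. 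I expect the main obstacle to be the bookkeeping in the existence half: one must verify that the functional extracted using only the $\fg$-grading genuinely satisfies the filtration condition \eqref{stratadefeq} in \emph{every} representation $V$, not just in $\fg$. This is exactly where Proposition~\ref{ev}(4) and the compatibility of Moy-Prasad filtrations with the $\Rep(G)$-tensor structure do the work, and care is needed because $X_{\tb}$ is \emph{a priori} an element of $\Om^1_F(\hfg)$ acting via $\rho$, so one must confirm the degree estimate is preserved under arbitrary $\rho$ — which follows since $\rho(\hfg_x(s))\hV_x(i)\subset\hV_x(i+s)$ is precisely part (4) of Proposition~\ref{ev}.
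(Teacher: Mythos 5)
Your argument follows the same route as the paper's proof: decompose $\n_\phi$ as $\dAo{x}$ plus an $F$-linear part lying in $\Om^1(\hfg)_{x,-r}$, take the induced functional, and for uniqueness subtract and invoke a faithful representation. (The paper avoids first reducing to $x\in\Ao$ by conjugating the flat structure $\dAo{y}$ to $d'=g\dAo{y}g^{-1}$ at the original point, and it does not bother isolating the homogeneous component of $\omega$ — it simply takes the whole $X=\n_\phi-d'\in\Om^1(\hfg)_{x,-r}$ as the representative — but these are cosmetic differences.) One small exposition lapse in your uniqueness paragraph: after correctly invoking a faithful $V$, you write ``applying this to the adjoint representation already shows $\tb_1-\tb_2\in\hfg^\vee_{x,-r+}$''; the adjoint representation is not faithful when $G$ has nontrivial center, so it alone would not pin down central components — the faithful $V$ you already chose is what carries the argument.
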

\begin{proof}
  First, we show uniqueness.  Suppose that $(\sG, \n)$ contains both
  $(x, r, \b)$ and $(x, r, \b')$ with respect to $\phi$.  It follows
  that $(X_{\tb} - X_{\tb'}) \hV_{x, i} \subset \Om^1(\hV)_{x, i-r+}$
  for all $i$ and $V \in \Rep(G)$.  In particular, $(X_{\tb} -
  X_{\tb'}) \in \Om^1(\hfg)_{x, -r+}$.  Therefore, $\tb - \tb' \in
  \hfg^\vee_{x, -r+}$ and $\b = \b'$.

  To prove existence, suppose that $x = g y$ for some $g \in \hG$ and
  $y \in \Ao$.  Write $d' = g \dAo{y} g^{-1}$ and $X = \n_\phi - d'$.
  We see that $X$ is $F$-linear; moreover, by \eqref{dA} and the
  assumption on $\n_\phi$, $X \hV_{x, i} \subset \Om^1(\hV)_{x, i-r}$
  for all $V \in \Rep(G)$.  Therefore, $X \in \Om^1(\hfg)_{x, -r}$.
  Since \eqref{dA} implies that $(d' - i\ddz) \hV_{x, i} \subset
  \hV_{x, i+}$, \eqref{stratadefeq} holds with $X_{\tb}$ replaced with
  $X$.  There is a corresponding element $B \in \hfg^\vee_{x, -r}$,
  and we may take $\b$ to be the induced functional on $(\hfg_{x, r} /
  \hfg_{x, r+})$.

\end{proof}

Containment thus induces a map $\sQ^G\to\sS^G$ which restricts to maps
$\sQ^G_r\to\sS^G_r$.  The image of $\sQ^G$ (resp. $\sQ^G_r$) is the
set of strata (resp. strata of depth $r$) contained in some flat
$G$-bundle $(\sG,\n)$ with respect to some
trivialization.

\begin{prop}\label{equivariance}
  The map $\sQ^G\to\sS^G$ and the maps $\sQ^G_r\to\sS^G_r$ are
  $\hG$-equivariant.
\end{prop}
\begin{proof}

  Suppose that $(\sG, \n, x, r,\phi)$ maps to $(x, r, \b)$.
  Lemma~\ref{strataequivariance} implies that $(gx,r,g\b)$ is
  contained in $(\sG,\n)$ with respect to $g\phi$, and
  Lemma~\ref{stratunique} implies that $(gx,r,g\b)$ is the image of
  $(\sG, \n, gx, r,g\phi)$.

\end{proof}


\begin{thebibliography}{99}




\bibitem{AD} J.~D Adler and S. DeBacker, ``Some applications of
  Bruhat-Tits theory to harmonic analysis on the Lie algebra of a
  reductive $p$-adic group,'' Michigan Math. J. {\bf 50} (2002),
  263--286.


\bibitem{BaVa1} D. G. Babbitt and V. S. Varadarajan, `` Formal
  reduction theory of meromorphic differential equations: a group
  theoretic view,'' Pacific J. Math. {\bf 109} (1983), 1--80.


\bibitem{Boa} P. Boalch,   ``Symplectic manifolds and isomonodromic
  deformations,''  Adv. Math. {\bf 163} (2001), 137--205.


\bibitem{BrSa1} C. Bremer and D.~S. Sage, ``Moduli spaces of irregular
singular connections,'' Int. Math. Res. Not. IMRN {\bf 2013} (2013),
1800--1872.

  

\bibitem{BrSa2} C. Bremer and D.~S. Sage, ``Isomonodromic deformation
  of connections with singularities of parahoric formal type,'' Comm.
  Math. Phys., {\bf 313} (2012), 175--208.

\bibitem{BrSa5} C. Bremer and D.~S. Sage, ``Flat $G$-bundles and
  regular strata for reductive groups,'' arXiv:1309.6060[math.AG], 2014.
  

\bibitem{BT1} F. Bruhat and J. Tits, ``Groupes r\'eductifs sur un corps
  local I,'' Publ. Math. I. H. E. S. {\bf 41}, 1972.

\bibitem{BT2} F. Bruhat and J. Tits, ``Groupes r\'eductifs sur un corps
  local II,'' Publ. Math. I. H. E. S. {\bf 60}, 1984.

\bibitem{Bus} C. J. Bushnell, ``Hereditary orders, Gauss sums, and
  supercuspidal representations of $\mathrm{GL}_N$,'' J. Reine Angew.
  Math. {\bf 375/376} (1987), 184--210.

\bibitem{BuFr} C. J. Bushnell and A. Fr\"olich, ``Nonabelian
   congruence Gauss sums and $p$-adic simple algebras,'' Proc. London
   Math. Soc. {\bf 50} (1985), 207--264.


\bibitem{BuKu1} C. J. Bushnell and P. Kutzko, {\em The admissible dual
    of $\mathrm{GL}_N$ via compact open subgroups,} Annals of
  Mathematics Studies, {\bf 129}, Princeton University Press,
  Princeton, NJ, 1993.

\bibitem{ChKa} T.-H. Chen and M. Kamgarpour, ``Preservation of depth
  in local geometric Langlands correspondence,'' arXiv:1404.60598[math.RT], 2014.

\bibitem{De} P. Deligne, {\em \'Equations diff\'erentielles \`a
    points singuliers r\'eguliers}, Lecture Notes in Mathematics,
  Vol. 163, Springer-Verlag, New York, 1970.


\bibitem{FrGa} E. Frenkel and D. Gaitsgory,  ``Local geometric
  Langlands correspondence and affine Kac-Moody algebras,'' in
\emph{Algebraic geometry and number theory}, Progr. Math {\bf 253},
Birkh\"auser Boston, Boston, MA, 2006, pp. 69--260.



\bibitem{FrGr} E. Frenkel and B. Gross, ``A rigid irregular connection
  on the projective line,''  Ann. of Math. (2) {\bf 170} (2009), 1469--1512.


\bibitem{FrZh} E. Frenkel and X. Zhu, ``Any flat bundle on a punctured
  disc has an oper structure,'' Math. Res. Lett. {\bf 17} (2010),
  27--37.

 \bibitem{GKM06} M. Goresky, R.  Kottwitz, and R. Macpherson, ``Purity
   of equivalued affine Springer fibers,'' Represent. Theory {\bf 10}
   (2006), 130--146.

\bibitem{HT} M. Harris and R. Taylor, {\em The geometry and cohomology
    of some simple Shimura varieties}, Annals of Mathematics Studies,
  {\bf 151}, Princeton University Press, Princeton, NJ, 2001.


\bibitem{Hen} G. Henniart, ``Une preuve simple des conjectures de
  Langlands pour $\GL(n)$ sur un corps $p$-adique,'' Invent. Math.
  {\bf 139} (2000), 439--455.

\bibitem{HM} R. Howe and A. Moy, `` Minimal $K$-types for $\GL_n$ over a $p$-adic field,''
Ast\'erisque {\bf 171/172} (1989), 257--273.

\bibitem{JMU} M. Jimbo, T. Miwa, and  K. Ueno,  ``Monodromy preserving
  deformations of linear ordinary differential equations with rational
  coefficients I,''  Physica D {\bf 2} (1981), 306--352.

\bibitem{KamSa} M. Kamgarpour and D.~S. Sage, ``A geometric analogue
  of a conjecture of Gross and Reeder,'' arXiv:1606.00943 [math.RT], 2016.


\bibitem{Kempf} G. Kempf, ``Instability in invariant theory,'' Ann. Math.
  {\bf 108} (1978), 299--316.



\bibitem{Ku} P. C. Kutzko, ``Towards a classification of the
  supercuspidal representations of $\GL_n$,'' J. London Math. Soc. (2)
  { \bf 37} (1988), 265--274.

\bibitem{Land} E. Landvogt, {\em A Compactification of the Bruhat-Tits
  Building}, Lecture Notes in Mathematics,
  Vol. 1619, Springer-Verlag, Berlin, 1996.


\bibitem{Mal} B. Malgrange, {\em \'Equations Diff\'erentielles \`a
     Coefficients Polynomiaux}, Progress in Mathematics, Vol. 96,
   Birkh\"auser Boston, Inc., Boston, MA, 1991.


\bibitem{Moy89} A. Moy, ``A conjecture on minimal $K$-types for
  $\GL_n$ over a $p$-adic field,'' in \emph{Representation theory and
    number theory in connection with the local Langlands conjecture:
    Proceedings of a conference held December 8-14, 1985},
  Contemp. Math. {\bf 86} (1989), 249--154.

\bibitem{MP1} A. Moy and G. Prasad, ``Unrefined minimal $K$-types for $p$-adic groups,''
Invent. Math. {\bf 116} (1994),  393--408.

\bibitem{MP2} A. Moy and G. Prasad, ``Jacquet functors and unrefined minimal $K$-types,''
Comment. Math. Helv. {\bf 71} (1996), no. 1, 98--121. 

\bibitem{PRag} G. Prasad and M.~S. Raghunathan, ``Topological central
  extensions of semi-simple groups over local fields,'' Ann. Math.
  {\bf 119} (1984), 143--268.

\bibitem{Sa00} D. S. Sage, ``The geometry of fixed point varieties on
  affine flag manifolds,'' Trans. Amer. Math. Soc. {\bf 352} (2000),
  2087--2119.

\bibitem{Saisaac} D. S. Sage, ``Regular strata and moduli spaces of
  irregular singular connections,'' Proceedings of the 2015 Isaac
  Congress, to appear.


\bibitem{Spr} T.~A. Springer, \emph{Linear Algebraic Groups}, second
  edition, Progr. Math {\bf 9}, Birkh\"auser Boston, Boston, MA, 1998.

\bibitem{Ti} J. Tits, ``Reductive Groups over Local Fields,'' Proc. Symp. Pure Math. 
{\bf 33} (1979), 29--69.

\bibitem{V} D. Vogan,  ``The algebraic structure of the
  representations of semisimple Lie groups,''
  Ann. Math. {\bf 119} (1979), 1--60.

 \bibitem{Was} W. Wasow, {\em Asymptotic expansions for ordinary
     differential equations},  Wiley Interscience, New York, 1976.

\bibitem{Wi} E. Witten,  ``Gauge theory and wild ramification,''
  Anal. Appl. (Singap.) {\bf 6} (2008), 429--501.


\end{thebibliography}
\end{document}